\newtheorem{theorem}{Theorem}[section]
\newtheorem{cor}[theorem]{Corollary}
\newtheorem{proposition}[theorem]{Proposition}
\newtheorem{remark}[theorem]{Remark}
\newtheorem*{thm*}{Theorem}
\newtheorem*{cor*}{Corollary}
\newtheorem*{lem*}{Lemma}
\newtheorem*{prop*}{Proposition}
\theoremstyle{definition}
\newtheorem{example}[theorem]{Example}
\newtheorem{deff}[theorem]{Definition}
\theoremstyle{plain}
\numberwithin{theorem}{section}
\numberwithin{equation}{section}
\numberwithin{figure}{section}
\DeclareMathOperator{\Ext}{Ext}
\DeclareMathOperator{\Sq}{Sq}
\DeclareMathOperator{\out}{out}
\DeclareMathOperator{\inn}{in}
\newcommand{\mF}{{\mathbb F}}
\newcommand{\mM}{{\mathbb M}}
\newcommand{\mR}{{\mathbb R}}
\newcommand{\umF}{\underline{\mF}}
\newcommand{\0}{\langle 0 \rangle }
\newcommand{\bigslant}[2]{{\raisebox{.2em}{$#1$}\left/\raisebox{-.2em}{$#2$}\right.}} 
\definecolor{green'}{HTML}{009E73}
\definecolor{pink'}{HTML}{CC79A7}
\definecolor{orange'}{HTML}{D55E00}
\definecolor{darkblue'}{HTML}{0072B2}
\definecolor{yellow'}{HTML}{F0E442}
\definecolor{lightblue'}{HTML}{56B4E9}
\definecolor{gold'}{HTML}{E69F00}
\title{Graphs arising from the dual Steenrod algebra}
\author{Connor Elliott, Courtney Hauf, Kai Morton, Sarah Petersen, Leticia Schow}
\begin{document}

\begin{abstract}
We extend Wood's graph theoretic interpretation of certain quotients of the mod $2$ dual Steenrod algebra to quotients of the mod $p$ dual Steenrod algebra where $p$ is an odd prime and to quotients of the $C_2$-equivariant dual Steenrod algebra. We establish connectedness criteria for graphs associated to monomials in these algebra quotients and investigate questions about trees and Hamilton cycles in these settings. We also give graph theoretic interpretations of algebraic structures such as the coproduct and antipode arising from the Hopf algebra structure on the mod $p$ dual Steenrod algebra and the Hopf algebroid structure of the $C_2$-equivariant dual Steenrod algebra.

\end{abstract}
\maketitle
\tableofcontents

\section{Introduction}
The action of the mod $p$ Steenrod algebra $A$ and its dual ${A}^\vee$ on mod $p$ cohomology and homology, respectively, plays a foundational role in homotopy theory computations. In particular, it serves as input to the Adams spectral sequence \cite{Adams58}, which is one of the most powerful tools for computing stable homotopy groups. A primary example of the Adams spectral sequence's use includes Hill-Hopkins-Ravenel's solution of the Kervaire invariant one problem and Lin-Wang-Xu's solution of the last Kervaire invariant one problem \cite{HillHopkinsRavenel16,LinWangXu25}. These solutions used the Adams spectral sequence, and thereby information coming from the action of the dual Steenrod algebra on homology, to answer the question of when a framed $4k + 2$ dimensional manifold can be surgically converted into a sphere. 

Because the dual Steenrod algebra plays a central role in stable homotopy calculations, it is worthwhile to study it from a variety of points of view (see, for example \cite{Smith07,WalkerWood18a,WalkerWood18b,Wood97}). The purpose of this paper is to study the dual Steenrod algebra $A^\vee$ from a graph theory perspective. In particular, we extend a graphical construction due to Wood \cite[\textsection 8]{Wood98} and studied by Yearwood \cite{Yearwood19} and Larson \cite{Larson22} at the prime $2$ to odd primes $p$ and the $C_2$-equivariant setting. For the sake of exposition, we discuss the odd primary and $C_2$-equivariant results separately.

\subsection{Odd primary results} Let $p$ be an odd prime and recall the mod $p$ dual Steenrod algebra $A^\vee$ is a graded Hopf algebra with
\[
A^\vee \cong \mF_p [ \xi_1, \xi_2, \xi_3, \cdots ] \otimes E[ \tau_0, \tau_1, \cdots],
\]
where $| \xi_i | = 2(p^i - 1)$ and $| \tau_i | = 2p^i - 1$. The coproduct on $A^\vee$ is given by
\begin{align*}
    \psi(\xi_n) & = \sum_{i=0}^n \xi_{n-i}^{p^i} \otimes \xi_i  \\
    \psi (\tau_n) & = \tau_n \otimes 1 + \sum_{i=0}^n \xi_{n-i}^{p^i} \otimes \tau_i
\end{align*}
while the unit map $\eta: \mF_p \to A^\vee$ and counit map $\epsilon: A^\vee \to \mF_p$ are isomorphisms in degree $0$. Finally, the  antipode \color{black} $c: A^\vee \to A^\vee$ is defined recursively by 
\begin{align*}
    c(\xi_0) & = 1 &    c(\tau_0) &  = \tau_0 \\
    \sum_{i=0}^{n} \xi_{n-i}^{p^i} c(\xi_i) & = 0 & 
    \tau_n + \sum_{i=0}^{n-1} \xi_{n-i}^{p^i} c(\tau_i) & = 0. \\
\end{align*}

Given an integer $n \geq 0,$ the quotient $A^\vee(n)$ is defined by
\[
A^\vee(n) = \bigslant{A^\vee}{(\xi_1^{p^n}, \xi_2^{p^{n-1}}, \cdots, \xi_n^p, \xi_{n + 1}, \xi_{n + 2}, \cdots, \tau_{n + 1}, \tau_{n + 2}, \cdots)}.
\]
In other words,
\[
A^\vee(n) \cong \bigslant{\mF_p [\xi_1, \xi_2, \cdots, \xi_n, \tau_0, \tau_1, \cdots, \tau_n]}{(\xi_1^{p^n}, \xi_2^{p^{n-1}}, \cdots, \xi_n^p,\tau_0^2, \tau_1^2, \cdots, \tau_n^2)}.
\]

These $A^\vee(n)$ are Hopf algebra quotients so the Hopf algebra structure on the dual Steenrod algebra  $A^\vee$ descends to the quotient $A^\vee(n)$ \cite{Rosen72, AdamsMargolis74}. Dualizing these quotient Hopf algebras yields finite subalgebras $A(n)$ of the Steenrod algebra $ A$ which are generated by reduced $p$-th power operations $P^i$, where $i \leq p^{n-1}$, and the Bockstein operation $\beta$. 

Both the sub-Hopf algebras $A(n)$ and their duals $A^\vee (n)$ facilitate computation of homotopy groups via the Adams spectral sequence. Many Adams spectral sequence computations begin with $\Ext$ over the Steenrod algebra $A$. This initial computation can often be done over the simpler finite sup-Hopf algebra $A(n)$ instead via a change-of-rings isomorphism \cite[A1.3.12]{Ravenel86}. 

One of the main purposes of this paper is to give a graph theoretic interpretation of the quotients $A^\vee (n)$ when $p$ is an odd prime. Similarly to Wood's construction at the prime $2$ \cite[\textsection 8]{Wood98}, this is accomplished by associating to every monomial $x \in A^\vee (n)$ a graph $G_x$ on the vertex set $\{1, 2, 2p, 2p^2, \cdots, 2p^n \}$. We then study the properties of the ensuing graphs. 

We defer an explicit description of the construction of these graphs to \cref{sec:oddprimes} and focus on the statements of the main results here. Taking addition to be represented as disjoint union of graphs, it is sufficient to restrict our study to monic monomials $x \in A^\vee (n)$.

Suppose $x$ is a monic monomial in $A^\vee (n)$. Then
\[
x = \tau_0^{\epsilon_0} \tau_1^{\epsilon_1} \cdots \tau_n^{\epsilon_n} \xi_1^{r_1} \xi_2^{r_2} \cdots \xi_n^{r_n}
\]
where $\epsilon_i \in \{0,1\}$ and $0 \leq r_i \leq p^{n + 1 - i} - 1$. We can rewrite $x$ using the $p$-adic expansion of $r_i$, that is
\[
r_i = \sum_{m=0}^{n-i} c_{i, n-i-m} p^{n-i-m}.
\]
Specifically, we can write
\begin{equation} \label{eq:pSF}
x = \tau_0^{\epsilon_0} \tau_1^{\epsilon_1} \cdots \tau_n^{\epsilon_n} \prod_{i=1}^n \prod_{m=0}^{n-i} \xi_i^{c_{i,n-i-m}p^{n-i-m}}.
\end{equation}
Rewriting the monomial $x$ in this way facilitates extracting a connectedness criterion for the graph $G_x$ from the data of the monomial $x$. We call $x$ as written in (\ref{eq:pSF}) a monic monomial in standard form (Definition \ref{def:StandardFormOdd}). In order to state this connectedness criterion, we first need to introduce some notation.

Let $t$ be an integer with $1 \leq t \leq n + 1$ and define $T_t$ { (Definition \ref{def:Tt})} to be the set of all $(t+1)$-tuples $(b_0, b_1, ,\cdots b_{t - 1}, b_t) $ where $b_i \in \{0, 1, \cdots, n\}$ with no two entries being the same.  Further let $T_t' \subset T_t$ (Definition \ref{def:T't}) be the subset of $T_t$ where $b_0 = r$ and $b_t = s$. \color{black}

\begin{thm*}[\cref{thm:connectedOdd}]
    The graph $G_x$ is connected if and only if 
    \begin{enumerate}
        \item the integers 
        \[
        C(r,s) : = \sum_{t = 1}^n \sum_{T_t'} \prod_{k = 1}^t c_{|b_k-b_{k-1}|, \max (b_{k-1}, b_k)-1} 
        \]
        are positive for all integers $r$ and $s$ with $0 \leq r < s \leq n$,
        \item and the sum 
        \[
        \sum_{k=0}^n \epsilon_k > 0.
        \]
    \end{enumerate}
\end{thm*}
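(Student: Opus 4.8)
The plan is to split the vertex set $\{1,2,2p,\dots,2p^n\}$ of $G_x$ into the distinguished vertex $1$ and the ``$\xi$-vertices'' $\{2,2p,\dots,2p^n\}$, handle the two conditions separately, and then recombine. First I would extract from the construction in \cref{sec:oddprimes} two structural facts: (a) every edge of $G_x$ incident to the vertex $1$ comes from a $\tau$-factor of $x$, so that $1$ is non-isolated in $G_x$ precisely when $\sum_{k=0}^n\epsilon_k>0$; and (b) the induced subgraph $H_x$ on $\{2,2p,\dots,2p^n\}$ depends only on the $\xi$-part of $x$, and under the identification $2p^j\leftrightarrow j\in\{0,1,\dots,n\}$ the multiplicity of the edge of $H_x$ between $r$ and $s$ (for $r\ne s$) is exactly $c_{|s-r|,\max(r,s)-1}$ -- this is how the factor $\xi_i^{c_{i,\ell}p^{\ell}}$ of the expanded form of $x$ gets accounted for. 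Granting these, and the observation that $G_x$ is connected exactly when $H_x$ is connected and $1$ is non-isolated, the theorem reduces to the single assertion that $H_x$ is connected if and only if $C(r,s)>0$ for all $0\le r<s\le n$.

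The core step is the reformulation of $C(r,s)$ as a path count in $H_x$. Write $m_{ab}=m_{ba}=c_{|b-a|,\max(a,b)-1}\ge 0$ for the edge multiplicities above, and let $T_t'$ denote the tuples in $T_t$ with $b_0=r$ and $b_t=s$. For such a tuple, $\prod_{k=1}^t c_{|b_k-b_{k-1}|,\max(b_{k-1},b_k)-1}=\prod_{k=1}^t m_{b_{k-1}b_k}$ is the product of the multiplicities along the walk $r=b_0,b_1,\dots,b_t=s$; since the entries of a tuple in $T_t$ are pairwise distinct, these walks are exactly the simple paths from $r$ to $s$ in the complete graph on $\{0,\dots,n\}$, and a simple path among $n+1$ vertices has at most $n$ edges, which is why the outer sum ranges over $1\le t\le n$. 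Hence
\[
C(r,s)=\sum_{\substack{\text{simple paths }P\\ \text{from }r\text{ to }s}}\ \prod_{e\in P}m_e .
\]
Because every $m_e$ is a non-negative integer there is no cancellation, so $C(r,s)>0$ if and only if some simple $r$--$s$ path uses only edges of positive multiplicity, i.e.\ (recalling that every $r$--$s$ walk contains an $r$--$s$ path) if and only if $r$ and $s$ lie in the same connected component of the underlying simple graph of $H_x$. Quantifying over all pairs $r<s$ then yields: all $C(r,s)$ are positive $\iff$ $H_x$ is connected. Combining this with the reduction of the first paragraph finishes the proof.

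The step I expect to be the main obstacle is fact (b): pinning down that the edge between $r$ and $s$ in $H_x$ has multiplicity precisely $c_{|s-r|,\max(r,s)-1}$. This requires carefully tracking how the $p$-adic digits $c_{i,\ell}$ of the exponents $r_i$ are distributed among vertex pairs by the construction, and in particular explaining why the second subscript is $\max(r,s)-1$ rather than, say, $\min(r,s)$. A secondary point needing care is the ``observation'' invoked at the end of the first paragraph: one must check, from the precise description of the $\tau$-part of the construction, that edges through the vertex $1$ cannot join two distinct components of $H_x$ -- equivalently, that condition~(1) is genuinely necessary and not merely sufficient.
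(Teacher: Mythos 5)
Your decomposition---isolate the vertex $1$, pass to the induced subgraph $H_x=G_x\setminus\{1\}$, and reinterpret $C(r,s)$ as a cancellation-free sum over simple $r$--$s$ paths of products of edge multiplicities---is exactly the route the paper takes: its proof of \cref{thm:connectedOdd} is two sentences (``condition (1) guarantees $G_x\setminus\{1\}$ is connected; condition (2) guarantees the vertex $1$ is attached''), with the first assertion delegated to the preceding proposition, which is itself only justified by analogy with the $p=2$ case. Your path-count reformulation, and the observation that non-negativity of the multiplicities makes positivity of $C(r,s)$ equivalent to the existence of a single all-positive path, is the correct and complete justification of that proposition and supplies more detail than the paper does.

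However, the issue you demote to ``a secondary point needing care'' is where the statement actually fails, and neither your sketch nor the paper's proof addresses it. The vertex $1$ can carry several $\tau$-edges, and these genuinely can join distinct components of $H_x$: for $x=\tau_0\tau_1\in A^\vee(1)$ the graph $G_x$ is the connected path $2$--$1$--$2p$, yet $C(0,1)=c_{1,0}=0$; the paper's own example $\tau_0\tau_1\tau_2\in A^\vee(2)$ is a connected star centered at $1$ with every $C(r,s)=0$. So conditions (1) and (2) together are sufficient but condition (1) is not necessary, and the ``only if'' direction cannot be proved by your reduction (or the paper's); a correct biconditional would have to either restrict attention to $H_x$ or enlarge $C(r,s)$ by terms such as $\epsilon_r\epsilon_s$ recording the length-two detour through $1$. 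Your other worry is also well founded: from the edge set, the edge of $H_x$ between $2p^a$ and $2p^b$ with $a<b$ comes from $\xi_{b-a}^{c\cdot p^{a}}$ and hence has multiplicity $c_{b-a,\,a}=c_{|b-a|,\min(a,b)}$, whereas the displayed formula reads $c_{|b-a|,\max(a,b)-1}$; these agree only when $|b-a|=1$, so the subscript in the theorem is an indexing slip (carried over from the $p=2$ section, where the same substitution error occurs) rather than something your argument should try to reproduce.
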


In \cite{Yearwood19}, Yearwood observed that the graphs $G_x$ can also naturally be viewed as directed graphs with edges oriented in the direction of the larger vertex. We will denote the directed version of the graph $G_x$ by $G_x^{\textup{dir}}$. With the directed version of Wood's construction in mind, Larson studied unilaterality, the appropriate connectedness property for directed graphs, in the $p = 2$ setting. In the odd primary setting we  identify exactly when the directed graph $G_x^\textup{dir}$ is unilateral. In order to state our result, we define ${T}_t''$ (Definition \ref{def:T''t}) to be the set of all $(t + 1)$-tuples $(b_0, b_1, \cdots, b_t)$ with $b_i \in \{0, 1, \cdots, n + 1\}$ and $r = b_0 < b_1 < \cdots < b_t = s$. \color{black}
\begin{thm*}[\cref{thm:unilateralOdd}]
    The directed graph $G_x^{\textup{dir}}$ is unilateral if and only if 
    \begin{enumerate}
        \item the integers
        \[
        U(r,s) := \sum_{t=1}^{n}\sum_{{T}''_t}\prod_{k=1}^t c_{b_{k-1},b_k}
        \]
        are positive for all integers $r,s$ with $0 \leq r < s \leq n + 1$,
        \item and furthermore $x$ is divisible by $\tau_0$.
    \end{enumerate}
\end{thm*}

With these connectedness results in hand, we can also identify the graphs $G_x$ that are trees.

\begin{thm*}[\cref{thm:treesOdd}]
    Suppose $x$ is a monic monomial in $A^\vee(n)$ such that $G_x$ is connected. Then $G_x$ is a tree if and only if
   \begin{enumerate}
       \item the sum \[
            \sum_{i=1}^{n} \alpha_p(r_i) = n,
            \]
            where \( \alpha_p(r_i) \) denotes the number of nonzero digits in the base-\(p\) expansion of \( r_i \), 
            \item and exactly one \( \tau_k \) divides the monic monomial \( x \), for some \( 0 \leq k \leq n \).
   \end{enumerate}
\end{thm*}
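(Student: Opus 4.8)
The plan is to reduce the statement to an edge count for $G_x$. The graph $G_x$ has vertex set $\{1,2,2p,\dots,2p^n\}$, hence $n+2$ vertices, and a connected graph on $N$ vertices is a tree if and only if it has exactly $N-1$ edges. So, using the standing hypothesis that $G_x$ is connected, it is enough to prove that $|E(G_x)| = n+1$ if and only if conditions (1) and (2) hold.

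The first step is to read the number of edges off the construction given in \cref{sec:oddprimes}. The edges of $G_x$ fall into two disjoint families. The edges contributed by $\tau_0^{\epsilon_0}\cdots\tau_n^{\epsilon_n}$ are precisely the edges incident to the vertex $1$, one for each $k$ with $\epsilon_k=1$, so there are $\sum_{k=0}^n\epsilon_k$ of them. The edges contributed by $\prod_{i=1}^n\xi_i^{r_i}$ lie among the remaining $n+1$ vertices $\{2,2p,\dots,2p^n\}$; rewriting $\prod_i\xi_i^{r_i}$ via the base-$p$ expansions $r_i=\sum_m c_{i,m}p^m$, there is exactly one such edge for each nonzero digit $c_{i,m}$, and distinct nonzero digits give distinct edges. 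Consequently
\[ |E(G_x)| = \sum_{i=1}^{n}\alpha_p(r_i) + \sum_{k=0}^{n}\epsilon_k. \]
Granting this, the ``if'' direction is immediate: if $\sum_i\alpha_p(r_i)=n$ and exactly one $\tau_k$ divides $x$, then $|E(G_x)|=n+1$, and a connected graph on $n+2$ vertices with $n+1$ edges is a tree.

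For the converse I would invoke \cref{thm:connectedOdd}. If $G_x$ is a tree it is in particular connected, so by \cref{thm:connectedOdd} we have $\sum_{k}\epsilon_k\ge 1$ and $C(r,s)>0$ for all $0\le r<s\le n$. The latter condition says exactly that the subgraph of $G_x$ spanned by the $\xi$-edges is connected on the $n+1$ vertices $\{2,2p,\dots,2p^n\}$, and such a subgraph must have at least $n$ edges, i.e.\ $\sum_i\alpha_p(r_i)\ge n$. On the other hand, being a tree forces $|E(G_x)|=n+1$, so $\sum_i\alpha_p(r_i)+\sum_k\epsilon_k=n+1$; together with the two inequalities this is possible only when $\sum_i\alpha_p(r_i)=n$ and $\sum_k\epsilon_k=1$, which are precisely (1) and (2).

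The part that will require the most care, and which I would establish first and in full detail, is the edge count: one must check, directly from the construction, that each nonzero base-$p$ digit of each $r_i$ contributes one and only one edge (so the contribution is $\alpha_p(r_i)$, the number of nonzero digits, rather than the digit sum), that these $\xi$-edges are pairwise distinct and lie entirely among $\{2,2p,\dots,2p^n\}$, and that the $\tau_k$ contribute exactly the edges meeting the vertex $1$. With that bookkeeping in hand, everything else is the standard equivalence ``connected $+$ $(N-1)$ edges $\Leftrightarrow$ tree'' combined with the connectivity lower bound coming from \cref{thm:connectedOdd}.
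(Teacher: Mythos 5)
Your overall strategy --- reduce to the count ``connected on $n+2$ vertices with exactly $n+1$ edges iff tree'' --- is exactly the paper's, and your ``if'' direction coincides with the paper's proof. What you add is the converse: the paper's proof of \cref{thm:treesOdd} only argues that (1) and (2) imply $G_x$ is a tree and never addresses the ``only if'' direction, whereas your argument via \cref{thm:connectedOdd} (connectivity of $G_x\setminus\{1\}$ forces at least $n$ distinct joined vertex pairs, hence $\sum_i\alpha_p(r_i)\ge n$, together with $\sum_k\epsilon_k\ge 1$; the tree condition then pins both quantities down) is correct and fills a genuine omission in the paper.

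The step you rightly flag as needing the most care is where both your proposal and the paper's own proof are imprecise, and your planned verification would not go through as stated. By the definition of $E_x$ in \cref{sec:oddprimes}, the symbol $\xi_i^{c_{i,j}\cdot p^j}$ contributes $c_{i,j}$ parallel edges between $2p^j$ and $2p^{i+j}$, so the true edge count is the digit sum $\sum_{i,j}c_{i,j}+\sum_k\epsilon_k$, which equals $\sum_i\alpha_p(r_i)+\sum_k\epsilon_k$ only when every digit is $0$ or $1$. For odd $p$ a digit can be as large as $p-1$, and any digit $\ge 2$ produces a $2$-cycle: for instance $x=\tau_0\xi_1^2\in A^\vee(1)$ with $p\ge 3$ satisfies (1), (2), and connectivity, yet $G_x$ has a doubled edge and is not a tree. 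This does not affect your converse (a tree has no parallel edges, so all digits are automatically $\le 1$ and the two counts agree there), but the ``if'' direction --- in your write-up and in the paper's --- needs either the extra hypothesis that all base-$p$ digits of the $r_i$ are at most $1$, or to be interpreted in the reduced graph $G_x^{\textup{R}}$. Since the paper's proof contains the identical gap I would not count it against you, but your instinct to establish the edge count first and in full detail is exactly right: carrying that out honestly is what exposes the problem.
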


Among the questions posed by Wood in \cite[\textsection 8]{Wood98} is whether there is an algebraic criterion that describes when the graph $G_x$ has a Hamilton cycle. Larson answered this question affirmatively in the $p = 2$ setting (\cite[Theorem 1.4]{Larson22}). We give an analogue of this result in the odd primary setting.
\begin{thm*}[\cref{thm:oddprimeHamilton}]
    Suppose $x = \tau_0^{\epsilon_0} \cdots \tau_n^{\epsilon_n} \xi_1^{r_1} \cdots \xi_n^{r_n} \in A^\vee(n)$ where $\epsilon_i \in \{0,1\}$ and $0 \leq r_i \leq p^{n + 1 - i} - 1$. The corresponding graph $G_x$ has a Hamilton cycle if
    \begin{enumerate}
        \item $n>0$,
        \item for each $j$ with $0 \leq j \leq n$,
        \[
            f_{\inn}(2p^j) + f_{\out} (2p^j) \geq         \frac{n}{2}, 
        \]
        \item and \[\sum_{k=0}^n  \epsilon_k \geq \frac{n}{2}.\]
    \end{enumerate}  
\end{thm*}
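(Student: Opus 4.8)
The strategy is to convert the three hypotheses into degree information about $G_x$ and then build the Hamilton cycle by a rotation--extension argument, following the blueprint Larson used at the prime $2$ \cite{Larson22}.

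First I would set up the dictionary between the monomial $x$ and the graph $G_x$. Unwinding the construction of \cref{sec:oddprimes}, the quantity $f_{\inn}(2p^j)+f_{\out}(2p^j)$ counts the edges of $G_x$ meeting the vertex $2p^j$: the in-edges are those recorded by the digits $c_{i,\ell}$ with $\ell+i=j$ together with the possible $\tau_j$-edge coming from the vertex $1$, and the out-edges are those with $\ell=j$; meanwhile $\sum_{k=0}^{n}\epsilon_k$ is exactly the number of edges of $G_x$ at the vertex $1$. Thus hypotheses (2) and (3) say that every vertex of $G_x$ has degree at least $n/2$, while hypothesis (1) discards the degenerate cases in which $G_x$ is too small to carry a cycle.

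Next I would pivot on the vertex $1$. Its neighbourhood is exactly $S:=\{2p^k : \epsilon_k=1\}$, and $|S|=\sum_k\epsilon_k\ge n/2$ by (3). A Hamilton cycle of $G_x$ is the same datum as a Hamilton path of the induced subgraph $H:=G_x\setminus\{1\}$ on $\{2,2p,\dots,2p^n\}$ whose two endpoints both lie in $S$ --- delete the vertex $1$ from the cycle in one direction, close the path up through $1$ in the other --- and note $\delta(H)\ge n/2-1$. So the task reduces to producing, inside $H$, a Hamilton path with both ends in $S$. Here I would use the rigid shape of the edges of $H$: each joins $2p^\ell$ to $2p^{\ell+i}$, so relabelling $2p^j\mapsto j$ puts $H$ on $\{0,1,\dots,n\}$ with all edges ``forward'' and with non-adjacency of $\ell<m$ governed by the single base-$p$ digit $c_{m-\ell,\ell}$. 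I would then run P\'osa rotations starting from a longest path of $H$: the degree bound forces each rotation endpoint to have many neighbours, and the structural constraints together with $|S|\ge n/2$ force the reachable set of endpoints to contain a pair inside $S$, yielding the Hamilton path with prescribed ends and hence the Hamilton cycle. An alternative packaging is Chv\'atal--Erd\H{o}s: one deduces $\kappa(G_x)\ge\alpha(G_x)$ from the degree bounds together with the interval-type structure of the $\xi$-edges.

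The hard part will be this last step. The raw degree bound $n/2$ leaves $G_x$, and likewise $H$, just short of every classical sufficient condition (Dirac, Ore, Hamilton-connectedness), so the argument can only go through by genuinely exploiting the ``forward/interval'' structure of the $\xi$-edges and the abundance of $\tau$-edges ($|S|\ge n/2$): in P\'osa language, forcing both endpoints of the Hamilton path of $H$ into the prescribed set $S$; in Chv\'atal--Erd\H{o}s language, simultaneously bounding $\alpha(G_x)$ above by $\lceil n/2\rceil$ and $\kappa(G_x)$ below by the same quantity. By contrast, the first two steps are routine bookkeeping once the construction of \cref{sec:oddprimes} is in hand.
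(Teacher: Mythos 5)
Your first paragraph (reading hypotheses (2) and (3) as degree bounds in the graph) matches the paper's setup, in particular \cref{lem:oddprimeInOutDeg}, but from there the two arguments diverge and yours is left incomplete. The paper's entire proof is a direct application of Dirac's theorem to the reduced simple graph $G_x^{\textup{R}}$ on all $n+2$ vertices: $f_{\inn}(2p^j)+f_{\out}(2p^j)$ is the degree of $2p^j$ in $G_x^{\textup{R}}$, $\sum_k\epsilon_k$ is the degree of the vertex $1$, and Dirac is invoked once. There is no deletion of the vertex $1$, no Hamilton path with prescribed endpoints, and no rotation argument. You instead declare Dirac (and every other classical criterion) inapplicable and reduce to producing a Hamilton path of $H=G_x\setminus\{1\}$ with both ends in $S=\{2p^k:\epsilon_k=1\}$, to be obtained by P\'osa rotations or Chv\'atal--Erd\H{o}s. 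That reduction is sound, but the decisive step --- forcing two rotation endpoints into $S$, or proving $\kappa(G_x)\ge\alpha(G_x)$ --- is only announced, never carried out, and you yourself flag it as ``the hard part.'' As written the proposal is a plan rather than a proof: nothing in it shows that the endpoint set reachable by rotations from a longest path of $H$ meets $S$ twice, and with $\delta(H)\ge n/2-1$ on $n+1$ vertices this does not follow from any general theorem you cite; it would require a genuinely new combinatorial lemma about the ``forward/interval'' edge structure.

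To be fair, your reason for abandoning Dirac --- that the bound $n/2$ falls short of the Dirac threshold $\tfrac{n+2}{2}$ for a graph on $n+2$ vertices --- is a legitimate observation about the statement as literally written (the same discrepancy already appears in Larson's $p=2$ version, which this theorem mirrors). But the intended argument is simply Dirac applied to $G_x^{\textup{R}}$; if you believe the stated threshold is too weak for that, the productive response is to say so explicitly (or to work with the strengthened hypothesis of degree at least $\tfrac{n+2}{2}$ and then apply Dirac), not to replace a two-line proof with an open-ended rotation argument whose key lemma is unproved.
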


Finally, we generalize the results of \cite{Yearwood19} and \cite{Larson22} giving a graphical interpretation of some of the Hopf algebra structure on $A^\vee(n)$. Specifically we give a graphical interpretation of the coproduct and antipode on elements the $\xi_i^{p^j}$ and $\tau_i \in A^\vee (n)$ when $p$ is an odd prime. 

\begin{thm*}[\cref{thm:coproductOdd}]
\hfill

\begin{enumerate}
    \item[I.] 
    The coproduct $\psi(\xi_i^{p^j}) \in A^\vee(n)$ is the sum of tensors of all pairs of edges that make length $2$ directed paths from the vertex $2 p^j$ to the vertex $2 p^{i +j}$ in the complete graph on the vertices $\{1, 2, 2p, 2p^2, \cdots, 2p^n \}$ considered as a directed graph.
     
    \item[II.] Similarly, coproduct $\psi(\tau_i) \in A^\vee(n)$ is the sum of tensors of all pairs of edges that make length $2$ directed paths from the vertex $1$ to the vertex $2 p^i$.
     \end{enumerate}
\color{black}
\end{thm*}

\begin{thm*}[\cref{thm:AntipodeOdd}]
\hfill

    \begin{enumerate}
        \item[I.] 
        The antipode $c(\xi_i^{p^j}) \in A^\vee(n)$ is the signed sum of all directed paths from the vertex $2 p^j$ to the vertex $2 p^{i + j}$ in the complete graph on the vertices $\{1,2, 2p, 2p^2, \cdots, 2p^n\}$ considered as a directed graph. The positive terms in the sum correspond exactly to paths consisting of an even number of edges from the vertex $2$ to the vertex $2p^{i +j}$.

    \item[II.] Similarly, the antipode $c(\tau_i) \in A^\vee(n)$ is the signed sum of all directed paths from the vertex $1$ to the vertex $2 p^{i}$. The positive terms correspond exactly to paths consisting of an even number of edges from the vertex $1$ to the vertex $2 p^i$.
    \end{enumerate}
    \color{black}
\end{thm*}

As a corollary, we obtain another characterization of the unilaterality of the directed graphs $G_x^\textup{dir}$.

\begin{cor*}[\cref{cor:unilateralityOdd}]
    Suppose $x$ is a monic monomial in $A^\vee (n)$. Then the directed graph $G_x^{\textup{dir}}$ is unilateral if and only if
    \begin{enumerate}
        \item for each $\xi_i^{p^j} \in A^\vee (n)$, at least one summand of $c(\xi_i^{p^i})$ is a factor of $x$
        \item and $\tau_0$ is a factor of $x$.
    \end{enumerate}
\end{cor*}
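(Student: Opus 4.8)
The plan is to deduce the corollary from the path description of the antipode in \cref{thm:AntipodeOdd} together with two elementary features of the digraphs $G_x^{\textup{dir}}$. The first is that every arc of $G_x^{\textup{dir}}$ runs from a smaller vertex to a larger one in the order $1 < 2 < 2p < \cdots < 2p^n$, so a directed $u$--$v$ path can exist only when $u < v$; hence $G_x^{\textup{dir}}$ is unilateral exactly when for every pair $u < v$ there is a directed path from $u$ to $v$. The second is that the only generator of $A^\vee(n)$ contributing an edge at the vertex $1$ is some $\tau_k$, contributing $\{1, 2p^k\}$; thus a directed path out of $1$ opens with an arc $1 \to 2p^k$ for some $k$ with $\tau_k \mid x$, and since arcs strictly increase, such a path reaches the vertex $2 = 2p^0$ only if that first arc is $1 \to 2$. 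So a directed path $1 \to 2$ exists in $G_x^{\textup{dir}}$ if and only if $\tau_0 \mid x$, which is condition~(2).

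Next I would record the translation between divisibility in $A^\vee(n)$ and paths. Writing $x$ via the $p$-adic expansions of its $\xi$-exponents as in \cref{sec:oddprimes}, the edge-monomial $\xi_d^{p^e}$ divides $x$ precisely when the arc $2p^e \to 2p^{d+e}$ occurs in $G_x^{\textup{dir}}$; consequently a product of edge-monomials divides $x$ if and only if all the corresponding arcs lie in $G_x^{\textup{dir}}$. By \cref{thm:AntipodeOdd}, for $1 \le i$ and $0 \le j$ with $i+j \le n$ the summands of $c(\xi_i^{p^j})$ are, up to sign, exactly the monomials $\prod_{k=1}^{\ell}\xi_{a_k-a_{k-1}}^{\,p^{a_{k-1}}}$ read off from the directed paths $2p^j = 2p^{a_0}\to 2p^{a_1}\to\cdots\to 2p^{a_\ell}=2p^{i+j}$. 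Combining these two facts, some summand of $c(\xi_i^{p^j})$ is a factor of $x$ if and only if $G_x^{\textup{dir}}$ contains a directed path from $2p^j$ to $2p^{i+j}$. Finally, as $(i,j)$ ranges over the pairs with $\xi_i^{p^j}\neq 0$ in $A^\vee(n)$, the pair $(2p^j,2p^{i+j})$ ranges over all pairs of distinct vertices in $\{2,2p,\dots,2p^n\}$.

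Granting these translations, the corollary falls out. If (1) and (2) hold, then $G_x^{\textup{dir}}$ contains the arc $1 \to 2$ and a directed path between any two vertices of $\{2,2p,\dots,2p^n\}$; concatenating $1\to 2$ with a directed path $2 \to 2p^b$ produces a directed path from $1$ to every remaining vertex, so by the first observation $G_x^{\textup{dir}}$ is unilateral. Conversely, if $G_x^{\textup{dir}}$ is unilateral then it contains a directed path $1 \to 2$, forcing $\tau_0 \mid x$, i.e.\ (2); and for each pair $2p^a < 2p^b$ it contains a directed path from $2p^a$ to $2p^b$ (necessarily passing only through vertices $2p^c$ with $a \le c \le b$), whose associated edge-monomial divides $x$ and, by \cref{thm:AntipodeOdd}, is a summand of $c(\xi_{b-a}^{\,p^a})$, giving (1).

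The one point that deserves care is the bookkeeping hidden in \cref{thm:AntipodeOdd}: one must check that every summand of $c(\xi_i^{p^j})$ is the edge-monomial of a \emph{simple} directed path (so that divisibility really is equivalent to containment of the whole path), that distinct paths yield distinct monomials (so that no cancellation obscures the correspondence), and that the paths enumerated there are precisely all strictly increasing sequences through the vertices between $2p^j$ and $2p^{i+j}$. Each of these follows from the fact that the exponents $a_0 < a_1 < \cdots < a_\ell$ along a path strictly increase, so that a path's edge set, and hence the path itself, is recovered from its monomial; I would spell this out but expect it to be routine. Everything else is formal, and one may instead route the unilaterality-via-paths step through \cref{thm:unilateralOdd}.
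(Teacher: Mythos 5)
Your argument is correct and follows essentially the same route as the paper: the paper's own proof likewise reduces unilaterality to the existence of directed paths between all vertex pairs and then invokes \cref{thm:AntipodeOdd} to translate that into the divisibility of $x$ by a summand of $c(\xi_i^{p^j})$ (for the pairs among $\{2,2p,\dots,2p^n\}$) together with divisibility by $\tau_0$ (for paths out of the vertex $1$). Your version simply spells out the bookkeeping — the monotonicity of arcs, the path-to-monomial bijection, and the absence of cancellation — that the paper leaves implicit.
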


\subsection{$C_2$-equivariant Results} 
While homotopy theory studies the properties of topological spaces that are preserved under continuous deformations (i.e. homotopies), similar questions can be asked about spaces with rotation and reflection symmetries. This leads to the field of equivariant homotopy theory where a group $G$ acts on the topological spaces under consideration. In this setting, there are also familiar invariants such as homology and homotopy (see \cite{AlaskaNotes96} for a comprehensive introduction). However, their definition is now more complicated. Specifically, instead of being integer graded, these equivariant theories are graded on the additive group underlying the real representation ring $RO(G)$. Intuitively, this comes from the fact that we are now interested in probing our equivariant spaces not only with the ordinary nonequivariant spheres $S^n$ but also with equivariant representation spheres $S^V$, which are formed by taking the one-point compactification of a $G$-representation $V$ and assigning the point at infinity the trivial action. 

Similarly to the nonequivariant setting, equivariant Steenrod algebras act on equivariant (co)homology theories and this serves as input to equivariant Adams spectral sequences. In general, equivariant dual Steenrod algebras are more complicated than their classical nonequivariant counterparts. For instance, the $C_p$-equivariant dual Steenrod algebra is not free over its coefficients \cite{SankarWilson22,HuKrizSombergZou23}. Further complicating matters, we do not currently have, nor expect to have in the near future, complete computations of the $G$-equivariant dual Steenrod algebra when $G$ an arbitrary finite or compact Lie group. Hence in this paper, we focus on the simplest nontrivial case where the group of equivariance is $G = C_2$, the cyclic group of order $2$. In this case, the $C_2$-equivariant dual Steenrod algebra actually has a form remarkably similar to the nonequivariant mod $p$ dual Steenrod algebra. 

In order to introduce the $C_2$-equivariant dual Steenrod algebra, we observe a few representation theoretic preliminaries. The group $C_2$ has two irreducible real representations, the one dimensional trivial representation and the one dimensional sign representation $\sigma$. Throughout, we will let $\rho$ denote the $C_2$-regular representation $1 + \sigma$. This leads to a bigraded theory which we can view as indexed by ${(i-j) + j \sigma}$. Here $i$ is the topological dimension while $j$ is the weight or twisted dimension. And indeed, the $C_2$-equivariant dual Steenrod algebra is bigraded. Moreover, it also has the structure of a Hopf algebroid rather than a Hopf algebra as in the nonequivariant mod $p$ setting. 

To describe the $C_2$-equivariant dual Steenrod algebra as a Hopf algebroid, we must first introduce the bigraded coefficient ring 
\[
\mM_2 \cong \mF_2[a,u] \oplus \frac{\mF_2[a,u]}{(a^\infty, u^\infty)} \{ \theta \},
\]
where $|a|=(-1,-1)$, $|u|=(0,-1)$. A more detailed description of $\mM_2$ can be found in \cref{sec:M2}. The Hopf algebroid $(\mM_2, A_{C_2}^\vee)$ has underlying algebra
\begin{equation} \label{eq:C2dualSteenrod}
A_{C_2}^\vee \cong \mM_2 [ \xi_1, \xi_2, \cdots, \tau_0, \tau_1, \cdots] / (\tau_i^2 = (u + a\tau_0)\xi_{i + 1} + a \tau_{i + 1})
\end{equation}
where $| \xi_i |  = \rho(2^i - 1)$ and $| \tau_i | = 2^i \rho - \sigma$. The coproduct is given by 
\begin{align*}
    \psi(\xi_n) & = \sum_{i=0}^n \xi_{n - i}^{2^i} \otimes \xi_i \\
    \psi(\tau_n) & = \tau_n \otimes 1 + \sum_{i = 0}^n \xi_{n - i}^{2^i} \otimes \tau_i
\end{align*}
(see \cite[Theorem 6.41]{HuKriz01} or \cite[Theorem 2.14]{LiShiWAngXu19} for the full Hopf algebroid structure). 

Given an integer $n \geq 0$, define the quotient $A_{C_2}^\vee (n)$ by
\[
A_{C_2}^\vee (n) = \frac{A_{C_2}^\vee}{(\xi_1^{2^n}, \xi_2^{2^{n-1}}, \cdots, \xi_n^2, \xi_{n + 1}, \xi_{n + 2}, \cdots, \tau_{n + 1}, \tau_{n + 2}, \cdots)}.
\]
In other words,
\[
A_{C_2}^\vee(n) \cong \bigslant{\mathbb{M}_2[\xi_1,\xi_2,..., \xi_n, \tau_0,\tau_1,..., \tau_n]}{(\xi_i^{2^{n-i+1}},\,\tau_i^2=(u+a\tau_0)\xi_{i+1}+a\tau_{i+1})}.
\]

One of the primary purposes of this paper is to give a graph theoretic interpretation of the quotients $A_{C_2}^\vee (n)$. Similarly to Wood's construction at the prime $2$ \cite[\textsection 8]{Wood98}, this is accomplished by associating to every monomial $x \in A_{C_2}^\vee (n)$ a graph $G_x$ on the vertex set $\{\sigma, \rho, 2\rho, 2^2 \rho, \cdots, 2^n \rho \}$. We then study the properties of the ensuing graphs. We defer a description of the construction of these graphs to \cref{sec:C2} and focus on the statements of the main results here. 

Given a monomial $x \in A_{C_2}^\vee (n)$ we can use the relation $\tau_i^2=(u+a\tau_0)\xi_{i+1}+a\tau_{i+1}$ to write $x$ as a finite sum consisting of terms of the form
\begin{equation} \label{eq:C2terms1}
w \tau_0^{\epsilon_0} \tau_1^{\epsilon_1} + \cdots + \tau_n^{\epsilon_n} \xi_1^{r_1} \xi_2^{r_2} \cdots \xi_n^{{r_n}}
\end{equation}
where $w \in \mM_2$, $\epsilon_i \in \{0,1\}$ and $0 \leq r_i \leq 2^{n + 1 - i} - 1$. Because addition is represented as disjoint union in our graphical interpretation, it is sufficient to focus on monic monomials of the form (\ref{eq:C2terms1}). Similarly to Wood's construction in the nonequivariant setting at the prime $2$, we will also utilize the $2$-adic expansion of $r_i$,
\[
r_i = \sum_{m  = 0}^{n - i} c_{i, n - i-m} \cdot 2^{n - i - m}.
\]
Specifically, given a monic monomial of the form (\ref{eq:C2terms1}) we can write 
\begin{equation} \label{eq:C2sf}
x = \tau_0^{\epsilon_0} \tau_1^{\epsilon_1} \cdots \tau_n^{\epsilon_n} \left(\prod_{i = 1}^n \prod_{m = 0}^{n - i} \xi_i^{c_{i, n - i -m} \cdot 2^{n - i - m}} \right).
\end{equation}
We call $x$ written as in (\ref{eq:C2sf}) a monic monomial in standard form (Definition \ref{def:StandardFormEquivariant}). 
Rewriting $x$ in this way again facilitates the extraction of a connectedness criterion for the graph $G_x$ from the data of the monomial $x$. In fact, this connectedness criterion ends up having the same statement as in the nonequivariant $p$ an odd prime case (\cref{thm:connectedOdd}). 
\begin{thm*}[\cref{thm:connectedC2}]
    Suppose $x \in A_{C_2}^\vee(n)$ is a monic monomial { in standard form}. Then the graph $G_x$ is connected if and only if 
    \begin{enumerate}
        \item the integers 
        \[
        C(r,s) : = \sum_{t = 1}^n \sum_{T_t'} \prod_{k = 1}^t c_{|b_k-b_{k-1}|, \max (b_{k-1}, b_k)-1} 
        \]
        are positive for all integers $r$ and $s$ with $0 \leq r < s \leq n$ and
        \item the sum 
        \[
        \sum_{k=0}^n \epsilon_k > 0.
        \]
    \end{enumerate}
\end{thm*}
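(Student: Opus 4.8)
The plan is to deduce the theorem from the odd‑primary connectedness criterion, \cref{thm:connectedOdd}, by recognizing $G_x$ as a relabeling of the graph attached to a corresponding monomial over an odd prime. Fix any odd prime $p$ and use the vertex dictionary $\sigma\leftrightarrow 1$ and $2^j\rho\leftrightarrow 2p^j$ for $0\le j\le n$. A monic monomial of standard form $x$ as in \eqref{eq:C2sf} is determined by the exponents $\epsilon_0,\dots,\epsilon_n$ together with the $2$‑adic digits $c_{i,j}\in\{0,1\}$ of $r_i=\sum_j c_{i,j}2^j$; let $x'=\tau_0^{\epsilon_0}\cdots\tau_n^{\epsilon_n}\prod_{i,j}\xi_i^{c_{i,j}p^j}\in A^\vee(n)$ be the monic monomial built from the same data. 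Then $x'$ is nonzero and lies in $A^\vee(n)$ because $r_i':=\sum_j c_{i,j}p^j<p^{\,n+1-i}$, and, since $c_{i,j}<p$, the $p$‑adic digits of $r_i'$ are again the $c_{i,j}$; hence the integers $C(r,s)$ and the sum $\sum_k\epsilon_k$ attached to $x'$ coincide with those attached to $x$.

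The first step is to check, atom by atom, that the two graph constructions produce the same edges under this dictionary. A factor $\xi_i^{2^j}$ has degree $2^j\rho(2^i-1)=2^{i+j}\rho-2^j\rho$ and so contributes the edge $\{2^j\rho,2^{i+j}\rho\}$, matching the edge $\{2p^j,2p^{i+j}\}$ coming from $\xi_i^{p^j}$; a factor $\tau_k$ has degree $2^k\rho-\sigma$ and contributes $\{\sigma,2^k\rho\}$, matching $\tau_k$'s edge $\{1,2p^k\}$. Because the graph of a monic monomial of standard form is assembled edge‑by‑edge from exactly these atoms, the dictionary is an isomorphism of multigraphs $G_x\cong G_{x'}$, so $G_x$ is connected if and only if $G_{x'}$ is; by \cref{thm:connectedOdd} the latter holds precisely when (1) and (2) hold, and by the previous paragraph these are the stated conditions. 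Alternatively, one can argue directly, mirroring the proof of \cref{thm:connectedOdd}: its combinatorial heart is that $C(r,s)$ is a weighted count of simple paths of $G_x$ between $2^r\rho$ and $2^s\rho$ — a tuple $(r=b_0,b_1,\dots,b_t=s)\in T_t'$ records the vertex sequence of such a path, and the product $\prod_{k=1}^t c_{|b_k-b_{k-1}|,\max(b_{k-1},b_k)-1}$ counts the ways of realizing it among the parallel copies of the required edges in $G_x$ — so $C(r,s)>0$ exactly when those two vertices share a component; then (1) says all of $2^0\rho,\dots,2^n\rho$ lie in one component, and, as the only edges at $\sigma$ are the $\tau_k$‑edges, (2) says $\sigma$ is joined to it.

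I expect the main obstacle to be precisely that first verification: confirming that the construction of \cref{sec:C2}, restricted to monic monomials of standard form, agrees edge‑for‑edge with the construction of \cref{sec:oddprimes} under the vertex dictionary — in particular that the passage to standard form via $\tau_i^2=(u+a\tau_0)\xi_{i+1}+a\tau_{i+1}$, together with the presence of the $\mM_2$‑coefficient in \eqref{eq:C2terms1}, does not alter the edge‑multiset of a monic term. If instead one argues directly, the analogous delicate point is the path‑counting interpretation of $C(r,s)$ together with the observation, needed for the converse, that a path of $G_x$ between $2^r\rho$ and $2^s\rho$ can always be chosen so as to avoid the vertex $\sigma$, and hence to be counted by $C(r,s)$.
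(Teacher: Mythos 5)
Your main argument is correct but takes a genuinely different route from the paper. The paper works directly in the $C_2$-equivariant setting: it first isolates, as a standalone proposition, the fact that the subgraph $G_x \setminus \{\sigma\}$ has the same adjacency matrix as in the nonequivariant case, so the Larson-style path count shows that subgraph is connected exactly when all $C(r,s)>0$; the theorem then follows because the only edges at $\sigma$ are the $\tau_k$-edges, so $\sigma$ is attached exactly when $\sum_k \epsilon_k > 0$. You instead transport the whole problem to the odd-primary case: the dictionary $\sigma \leftrightarrow 1$, $2^j\rho \leftrightarrow 2p^j$ identifies $G_x$ with $G_{x'}$ for the odd-primary monomial $x'$ built from the same digit data, and the quantities $C(r,s)$ and $\sum_k\epsilon_k$ are manifestly the same for $x$ and $x'$; your checks that $r_i'<p^{\,n+1-i}$ and that the binary digits $c_{i,j}$ remain the $p$-adic digits of $r_i'$ are exactly what is needed for $x'$ to be a legitimate standard-form monomial of $A^\vee(n)$. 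This makes \cref{thm:connectedC2} a formal corollary of \cref{thm:connectedOdd} with no combinatorics repeated, which is arguably cleaner; the paper's route keeps the section self-contained. The worry you flag about the $\mM_2$-coefficient is moot for the statement as given: a monic monomial of standard form has trivial coefficient by definition, and a coefficient $w$ would only shift the planar embedding of the vertices, not the abstract graph. Your alternative direct sketch coincides with the paper's proof.

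The caveat you raise for the converse is real, and the paper does not resolve it either: a path between $2^r\rho$ and $2^s\rho$ cannot always be rerouted around $\sigma$. For instance, $x=\tau_0\tau_1 \in A_{C_2}^\vee(1)$ gives the connected graph in which $\tau_0$ and $\tau_1$ join $\sigma$ to $\rho$ and to $2\rho$, while $C(0,1)=c_{1,0}=0$; so the ``only if'' direction fails as literally stated, and the same example defeats \cref{thm:connectedOdd}. Since your reduction faithfully inherits whatever is true of \cref{thm:connectedOdd}, this is a defect of the statement rather than of your argument, but it is worth recording that condition (1) characterizes connectedness of $G_x\setminus\{\sigma\}$ rather than of $G_x$, and the two can differ when $x$ is divisible by more than one $\tau_k$.
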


Similarly to the nonequivariant setting, the graph $G_x$ can also naturally be viewed as a directed graph with edges oriented in the direction of the vertex of larger representation dimension. We will again denote the directed version of the graph $G_x$ by $G^\textup{dir}_x.$ Much like the nonequivariant $p$ an odd prime case, we have a similar criterion for unilaterality. 

\begin{thm*}[\cref{thm:unilateralC2}]
     Suppose $x \in A_{C_2}^\vee(n)$ is a monic monomial { in standard form}. Then the directed graph $G_x^{\text{dir}}$ is unilateral if and only if
    \begin{enumerate}
        \item the integers
        \[
        U(r,s) := \sum_{t=1}^{n}\sum_{{T}''_t}\prod_{k=1}^t c_{b_{k-1},b_k}
        \]
        are positive for all integers $r,s$ with $0 \leq r < s \leq n + 1$, 
        \item and $x$ is divisible by $\tau_0$.
    \end{enumerate}
\end{thm*}

Similarly to the nonequivariant $p$ an odd prime case, we also give an algebraic criterion that describes when the graph $G_x$ has a Hamilton cycle. 

\begin{thm*}[\cref{thm:HamiltonCycleC2}]
    Suppose $x = \tau_0^{\epsilon_0} \cdots \tau_n^{\epsilon_n} \xi_1^{r_1} \cdots \xi_n^{r_n}$ is a monic monomial { in standard form} in $ A_{C_2}^\vee(n)$. The corresponding graph $G_x$ has a Hamilton cycle if
    \begin{enumerate}
        \item $n>0$,
        \item for each $j$ with $0 \leq j \leq n$,
        \[
            \deg_{\inn}(2^j \rho) + \deg_{\out} (2^j \rho) \geq         \frac{n}{2}, 
        \]
        \item and \[\sum_{k=0}^n  \epsilon_k \geq \frac{n}{2}.\]
    \end{enumerate}  
\end{thm*}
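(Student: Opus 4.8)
The plan is to reduce the statement to a purely combinatorial claim about $G_x$ and then mirror the proof of \cref{thm:oddprimeHamilton}. First I would unpack the construction of \cref{sec:C2} for a standard-form monic monomial: the vertex set $\{\sigma,\rho,2\rho,\dots,2^n\rho\}$ is totally ordered, $\sigma$ is joined to $2^j\rho$ exactly when $\tau_j\mid x$ (that is, $\epsilon_j=1$), and $2^j\rho$ is joined to $2^{j'}\rho$ (for $j<j'$) with multiplicity equal to the base-$2$ digit $c_{j'-j,\,j}$ of $r_{j'-j}$. Under the order-preserving relabeling $\sigma\mapsto 1$, $2^j\rho\mapsto 2p^j$ this is \emph{exactly} the combinatorial data defining the odd-primary graph of $\tau_0^{\epsilon_0}\cdots\tau_n^{\epsilon_n}\xi_1^{r_1}\cdots\xi_n^{r_n}$, and the quantities $\deg_{\inn}$, $\deg_{\out}$ together with hypotheses (1)--(3) are phrased solely in terms of this data. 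Consequently the assertion has the same form as \cref{thm:oddprimeHamilton}, so it suffices to re-run that argument in the present notation; I record the steps I expect it to take.

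Since a Hamilton cycle uses each vertex at most once it uses no repeated edge, so I would pass to the reduced graph $G_x^{\textup{R}}$ and argue there. The hypotheses translate as follows: (3) says $\deg(\sigma)=\sum_{k=0}^n\epsilon_k\ge n/2$; (2) says every $\xi$-vertex $2^j\rho$ has degree at least $n/2$ in $G_x^{\textup{R}}$ (here the separate in/out bookkeeping is what lets one read off a genuine lower bound on the \emph{reduced} degree rather than a sum of multiplicities in $G_x$); and (1) guarantees, via \cref{thm:connectedC2}, that $G_x$ is connected. I would then let $H$ be the induced subgraph on the $n+1$ $\xi$-vertices $\{\rho,2\rho,\dots,2^n\rho\}$, produce a Hamilton path of $H$ between two $\tau$-divisible endpoints — there are at least $n/2$ candidate endpoints, all lying in $H$, and the minimum degree of $H$ sits near (though slightly below) the Dirac threshold for a graph on $n+1$ vertices, so a Hamilton- or path-connectedness criterion should apply — and finally splice $\sigma$ in between those two endpoints to close up a Hamilton cycle of $G_x^{\textup{R}}$. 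An alternative is a P\'osa-style rotation--extension argument carried out directly on $G_x^{\textup{R}}$, using $\sigma$ and $\rho=2^0\rho$ as hubs adjacent to large prescribed families of $\xi$-vertices.

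I expect the main obstacle to be precisely that the degree bounds in (2)--(3) are only $n/2$ while $G_x$ has $n+2$ vertices, so neither Dirac's theorem nor an off-the-shelf Hamilton-connectedness criterion applies cleanly, and the argument must exploit the layered structure of $G_x$: $\sigma$ is the unique minimal vertex, $2^n\rho$ the unique maximal one, every edge joins a digit-compatible pair $2^j\rho,2^{j'}\rho$, and $\sigma$ and $\rho$ can each reach a large family of intermediate vertices. A secondary technical point is controlling the passage from the multigraph $G_x$ to $G_x^{\textup{R}}$ — where the decomposition $\deg_{\inn}+\deg_{\out}$ rather than a single total degree pays off — and a third is dispatching the small cases $n\in\{1,2\}$, where the $n/2$ bounds become nearly vacuous and force specific configurations, by direct inspection.
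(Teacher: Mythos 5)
The paper's proof here is a one-line application of Dirac's theorem to $G_x$ itself: because the exponents in standard form are base-$2$ digits, every $c_{i,j}$ is $0$ or $1$, so $G_x$ is already a simple graph and no passage to a reduced graph is needed; condition (2) is by definition the total degree of each vertex $2^j\rho$, condition (3) is the degree of $\sigma$, and Dirac's theorem is invoked directly. Your proposal instead sets up a reduction to the odd-primary construction, passes to $G_x^{\textup{R}}$ (vacuous here, for the reason just given), and then attempts a bespoke argument: a Hamilton path in the induced subgraph $H$ on the $\xi$-vertices between two neighbours of $\sigma$, spliced through $\sigma$, or alternatively a P\'osa-style rotation--extension. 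This is a genuinely different and far more elaborate route than the paper's.

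The difficulty is that you never complete it. The decisive step --- producing a Hamilton path of $H$ between two prescribed $\tau$-divisible endpoints --- is left at ``a Hamilton- or path-connectedness criterion should apply,'' and you yourself observe that the available minimum-degree bound sits below the relevant thresholds: $H$ has $n+1$ vertices while you can only guarantee degree roughly $n/2$ there, and Hamilton-connectedness criteria in the Ore/Chv\'atal--Erd\H{o}s family demand more. The P\'osa alternative is likewise only named, not executed. So as written the proposal does not establish the statement; that is the gap. To your credit, the obstacle you flag is real: $G_x$ has $n+2$ vertices, so Dirac's theorem as usually stated requires minimum degree $(n+2)/2$, whereas hypotheses (2)--(3) only supply $n/2$, a discrepancy the paper's own one-line invocation of Dirac elides. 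But identifying the difficulty is not the same as resolving it --- you would need either to actually carry out the splicing or rotation argument, or to work with the strengthened bound $(n+2)/2$, under which the paper's direct Dirac argument goes through verbatim.
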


Finally, we give a graphical interpretation of some of the Hopf algebroid structure on $A_{C_2}^\vee(n)$. Specifically we give a graphical interpretation of the coproduct and antipode on the elements $\xi_i^{p^j}$ and $\tau_i \in A_{C_2}^\vee (n)$.

\begin{thm*}[\cref{thm:coprodC2}] 
\hfill
\begin{enumerate}

    \item[I.] 
    The coproduct $\psi(\xi_i^{p^j}) \in A^\vee_{C_2}(n)$ is the sum of tensors of all pairs of edges that make length $2$ directed paths from the vertex $2^j \rho $ to the vertex $2^{i + j} \rho$ in the complete graph on the vertices $\{\sigma, \rho, 2 \rho, 2^2 \rho, \cdots 2^n \rho\}$ considered as a directed graph.
    \item[II.] Similarly, the coproduct $\psi(\tau_i) \in A_{C_2}^\vee(n)$ is the sum of tensors of all pairs of edges that make length $2$ directed paths from the vertex $\sigma$ to the vertex $2^i \rho$.
\end{enumerate}
\color{black}
\end{thm*}

\begin{thm*}[\cref{thm:AntipodeC2}]
\hfill 
\begin{enumerate}

    \item[I.] 
    The antipode $c(\xi_i^{p^j}) \in A^\vee_{C_2}(n)$ is the sum of all directed paths from the vertex $2^j \rho$ to the vertex $2^{i + j} \rho$ in the complete graph on the vertices $\{\sigma, \rho, 2 \rho, 2^2 \rho, \cdots 2^n \rho\}$ considered as a directed graph.

    \item[II.] Similarly, the antipode $c(\tau_i) \in A^\vee_{C_2} (n)$ is the sum of all directed paths from the vertex $\sigma$ to the vertex $2^i \rho$.
    \color{black}
\end{enumerate}
\end{thm*}

As a corollary, we also obtain another characterization of the unilaterality of the directed graphs $G_x^\textup{dir}$.

\begin{cor*}[\cref{cor:unilatC2}]
    Suppose $x$ is a monic monomial { in standard form} in $A_{C_2}^\vee (n)$. Then the directed graph $G_x^{\textup{dir}}$ is unilateral if and only if
    \begin{enumerate}
        \item for each $\xi_i^{p^j} \in A_{C_2}^\vee (n)$, at least one summand of $c(\xi_i^{p^i})$ is a factor of $x$
        \item and $\tau_0$ is a factor of $x$.
    \end{enumerate}
\end{cor*}

\subsection{Open Questions} These results suggest some natural questions for further study, some of which partially overlap with questions posed by Larson in \cite[\textsection 5.4]{Larson22}.
\begin{enumerate}
    \item In \cite[\textsection 8]{Wood98}, Wood points out that the mod 2 Steenrod algebra $A$ (as opposed to its dual) and some of its subalgebras $A(n)$ can be interpreted graph theoretically. What would the results of \cite{Yearwood19,Larson22} or this paper look like in that setting?
    \item Given the Hopf algebra structure on $A^\vee(n)$, can one give a graph theoretic interpretation of the coproduct $\psi$ and antipode $c$ on an arbitrary monomial $x \in A^\vee (n)$? Similarly, given the Hopf algebroid structure on $A_{C_2}^\vee(n)$, can one give a graph theoretic interpretation of the coproduct $\psi$ and antipode $c$ on an arbitrary monomial $x \in A^\vee (n)$?
    \item In \cite[\textsection 5]{Wood98} Wood describes two procedures one can perform in the mod 2 Steenrod algebra $A$, called stripping and strapping, that together allow one to derive all of the Adem relations from the single relation $\Sq_1  \Sq_1 = 0$. A step in the process of recovering the Adem relations involves assigning to each monomial $\xi_i^{2^j} \in A^\vee$ a “stripping operator” which is analogous to how Wood’s construction assigns an edge of a graph to each $\xi_i^{2^j}$. Can this analogy be leveraged to obtain further graph theoretic results  in both the $p$-primary and $C_2$-equivariant settings\color{black}? 
    \item Can we extend our graphical constructions to represent comodules over the quotients $A^\vee(n)$ or $A^\vee_{C_2}(n)$\color{black}?
\end{enumerate}

\subsection{Outline of the paper} In \cref{sec:Woodp=2} we recall Wood's construction of the graphs corresponding to monomials $x \in A^\vee (n)$ when $p=2$ \cite[\textsection 8]{Wood98}. We also discuss graph theoretic results in this setting due to Larson and Yearwood \cite{Larson22,Yearwood19}. In \cref{sec:oddprimes} we describe a construction yielding graphs corresponding to monomials $x \in A^\vee (n)$ when $p$ is an odd prime. This section also contains the proofs of our odd primary results. In \cref{sec:C2} we extend our odd primary construction and results to the $C_2$-equivariant setting. 

\subsection{Definitions, conventions, and notation} \label{sec:notation}
We recall some foundational definitions and set notation.

\begin{enumerate}
    \item A \textbf{\textit{graph}} $G=(V_G,E_G)$ is a structure that contains a set of objects $V_G$, which are called \textbf{\textit{vertices}}, and relations on the vertices coming from $E_G$, which are called \textbf{\textit{edges}}. The subscript $G$ on $V_G$ and $E_G$ will often be suppressed when the graph is clear from context.

    \item Given an edge $e \in E_G$ relating two vertices $v_0$ and $v_1,$ we say $v_0$ and $v_1$ are the \textbf{\textit{ends}} of $e$. 

    \item For $e\in E$ connecting vertices $u,v\in V$, we will write $e\colon u\to v$ and for $n$ copies of $e$, we will write $e^n\colon u\xrightarrow{n} v$. 

    \item The \textbf{\textit{degree}} of a vertex $v$ is given by the number of edges that touch that vertex, we will denote this by $\deg(v)$. 

    \item A \textbf{\textit{walk}} is a sequence of edges and vertices in a graph. When the starting and ending vertices are the same, we say the walk is a \textbf{\textit{closed walk}}.

    \item  A \textbf{\textit{path}} is a sequence of edges and vertices where no vertex is visited more than once, except possibly the start and end vertices. 

    \item Note that a path is a specific type of walk, so when the ending and starting vertices are the same, the path is considered a \textbf{\textit{closed path}}.

    \item A \textbf{\textit{cycle}}, also known as a closed path, is a path in a graph that begins and ends at the same vertex. 

    \item  A \textbf{\textit{connected}} graph is one for which there is a path between any given pair of vertices. 

    \item A \textbf{\textit{tree}} is a graph that is connected and acyclic. 

    \item A \textbf{\textit{complete}} graph is a graph where any given pair of vertices is connected by an edge. 

    \item A \textbf{\textit{Hamiltonian path}} is a path that visits every vertex exactly once. 

    \item A \textbf{\textit{Hamiltonian cycle}} is a cycle that visits every vertex exactly once. 

    \item A \textbf{\textit{directed edge}} is an edge where the ends are distinguished - one is the head and one is the tail. In particular, a directed edge is specified as an ordered pair of vertices (tail, head).

    \item A \textbf{\textit{directed graph}} is a graph where every edge is directed. 

    \item A directed graph $D$ is \textbf{\textit{unilateral}} if for every pair of distinct vertices $v_i, \, v_j \in D$, there is a directed path starting at $v_i$ and ending at $v_j$ or vice versa. 

    \item Given a graph G with an ordered vertex set $V_G = \{v_1, v_2, \cdots, v_n \}$, the associated \textbf{\textit{adjacency matrix}} $A_G$ is an $n \times n$ matrix with entry $a_{i,j}$ given by the number of edges from $v_i$ to $v_j$.
\end{enumerate}

\subsection*{Acknowledgments}
The authors are grateful to the Department of Mathematics at the University of Colorado Boulder for sponsoring the Research Experience for Undergraduates and Graduates in Summer 2025, which led to this project. The fourth author would also like to thank the Isaac Newton Institute for Mathematical Sciences, Cambridge, for support and hospitality during the programme Equivariant homotopy theory in context, where partial work on this paper was undertaken. This work was supported by EPSRC grant EP/Z000580/1. The material in this paper is also based upon work supported by the National Science Foundation under Grant No. DMS 2135884.

\section{Wood's graphical construction at the prime $2$} \label{sec:Woodp=2}

In this section, we recall Wood's graphical construction at the prime $2$ and describe results due to Yearwood \cite{Yearwood19} and Larson \cite{Larson22}. These results inspire our work at odd primes and in the $C_2$-equivariant setting so it is helpful recollect them here.

\subsection{The mod $2$ dual Steenrod algebra}

The mod $2$ dual Steenrod algebra is 
\[
A^\vee \cong \mF_2 [ \xi_1, \xi_2, \cdots],
\]
where $| \xi_n | = 2^n - 1$. The coproduct is given by 
\[
\psi(\xi_n) = \sum_{i=0}^n \xi_{n-i}^{2^i} \otimes \xi_i
\]
and the antipode $c: A^\vee \to A^\vee$ is given recursively by 
\begin{align*}
    c(\xi_0) = 1 \\
    \sum_{k=0}^i \xi_{i-k}^{2^k} c(\xi_k) = 0.
\end{align*}
 Milnor solved this recursion in \cite[Lemma 10]{Milnor58} obtaining
\[
c(\xi_i) = \sum_\pi \prod_{k=1}^{\ell(\pi)} \xi_{\pi(k)}^{2^{\sigma(k)}}
\]
where the sum is over all ordered partitions $\pi$ of $i$, $\ell(\pi)$ is the length of $\pi$, $\pi(k)$ is the $k$th term of $\pi$, and $\sigma(k)$ is the sum of the first $k-1$ terms of $\pi$.

The Hopf algebra quotient $A^\vee(n)$ is given by 
\[
A^\vee (n) \cong \bigslant{A^\vee}{(\xi_1^{2^{n + 1}}, \xi_2^{2^n}, \xi_3^{2^{n-1}}, \cdots, \xi_{n + 1}^2, \xi_{n + 2}, \xi_{n + 3}, \cdots)}.
\]
In other words, 
\[
A^\vee(n) \cong \bigslant{\mF_2 [ \xi_1, \xi_2, \cdots, \xi_{n + 1}]}{(\xi_1^{2^{n + 1}}, \xi_2^{2^n}, \xi_3^{2^{n-1}}, \cdots, \xi_{n + 1}^2)}.
\]

\subsection{Wood's graphical construction $(p = 2)$} 
Suppose $x$ is a monomial in $A^\vee(n)$. Then 
\[
x = \xi_1^{r_1} \xi_2^{r_2} \cdots \xi_n^{r_n} \xi_{n + 1}^{r_{n + 1}},
\]
where $r_i \in \{0,1, \cdots, 2^{n + 1 - i}-1 \}$ and where each $r_i$ has the $2$-adic expansion
\[
r_i = \sum_{m=0}^{n + 1-i} c_{i,n+ 1-i-m} \cdot 2^{n+ 1-i-m}.
\]
We can rewrite the monomial $x$ as 
\[
x = \prod_{i=1}^{n + 1} \prod_{m = 0}^{{n+1-i}} \xi_i^{c_{i, n + 1 - i - m} \cdot 2^{n + 1 - i - m}}.
\]
This rewritten form of $x$ is useful for constructing the graphical interpretation of monomials in $A^\vee (n)$ so we make the following definition.
\begin{deff}\label{sform1}
    Given  a monomial \color{black} $x \in A^\vee (n)$, the \textbf{standard form} of $x$ is 
    \[
x = \prod_{i=1}^{n + 1} \prod_{m = 0}^{{n+1-i}}\xi_i^{c_{i, n + 1 - i - m} \cdot 2^{n + 1 - i - m}},
\]
where the exponents on the $\xi_i$ are written in terms of their $2$-adic expansion. 
\end{deff}

Given a monomial \color{black} $x \in A^\vee(n),$ Wood defines the graph $G_x = (V_x, E_x)$ by giving the ordered vertex set $V_x = \{2^0, 2^1, 2^2, \cdots, 2^{n + 1} \}$ and the edge set 
\[
E_x = \bigcup \left\{ \xi_i^{c_{i,j} \cdot 2^j} : 2^{j } \xrightarrow{c_{i,j}} 2^{i + j}  \right\}
\]
\cite[\textsection 8]{Wood98}. 
Notice that  the degree $|\xi_i^{2^j}|$ of the edge $\xi_i^{2^j}$, that is, the grading of $\xi_i^{2^j}$ when viewed as an element of $A^\vee(n)$, is \color{black}the absolute value of the difference of the ends of $\xi_i^{2^j}$.

Considering entries in the upper triangle of the adjacency matrix $A_{x}$ associated to the graph $G_x$, we observe that 
\begin{equation} \label{eq:acRel}
    a_{l,k} = c_{k-l,l-1} \qquad (\text{when } l < k),
\end{equation}
that is,
\begin{equation} \label{eq:AdjMp=2}
A_{x} = \begin{bmatrix}
    0 & c_{1,0} & c_{2,0} & \quad & c_{n,0} & c_{n + 1, 0} \\
    c_{1,0} & 0 & c_{1,1} & \quad & c_{n-1,1} & c_{n, 1} \\
    c_{2,0} &  c_{1,1} & 0 & \quad & c_{n-2,2} & c_{n -1, 2} \\
    & & & \ddots & & \\
    c_{n,0} & c_{n-1,1} & c_{n-2,2} & & 0 & c_{1, n} \\
    c_{n + 1, 0} & c_{n, 1} & c_{n-1,2} & & c_{1, n} & 0 \\
\end{bmatrix}.
\end{equation}

\begin{example}
The monomial $\xi_1^2 \xi_2^2 \in A^\vee(2)$ has associated graph $G_{\xi_1^2 \xi_2^2}$:
    \vspace{1em}
        \begin{center}
        \begin{tikzpicture}
            \begin{scope}[every node/.style={circle,thick,draw}]
            \node (1) at (0,2) {$1$};
            \node (20) at (2,0) {$2$};
            \node (21) at (0,-2) {$2^2$};
            \node (22) at (-2,0) {$2^3$};
        \end{scope}
        \begin{scope}[>={Stealth[black]},
                      every node/.style={fill=white,circle},
                      every edge/.style={draw, thick}]
            \path[] [below right] (20) edge node {\color{black} $\xi_1^{2}$} (21);
            \path [above] (20) edge node {$\xi_2^{2}$} (22);
        \end{scope}
    \end{tikzpicture}
    \end{center}
\end{example}

\begin{example}
    The monomial $\xi_1^7 \xi_2 \xi_3 \in A^\vee (2)$ has associated graph $G_{\xi_1^7 \xi_2 \xi_3}${:}
    \vspace{1em}
        \begin{center}
        \begin{tikzpicture}
            \begin{scope}[every node/.style={circle,thick,draw}]
            \node (1) at (0,2) {$1$};
            \node (20) at (2,0) {$2$};
            \node (21) at (0,-2) {$2^2$};
            \node (22) at (-2,0) {$2^3$};
        \end{scope}
        \begin{scope}[>={Stealth[black]},
                      every node/.style={fill=white,circle},
                      every edge/.style={draw, thick}]
            \path[] [above right] (1) edge node {\color{black} $\xi_1$} (20);
            \path (1) edge node {$\xi_2$} (21);
            \path [above left] (1) edge node {$\xi_3$} (22);
            \path[below right] (20) edge node {$\xi_1^2$} (21);
            \path[below left] (21) edge node {$\xi_1^4$} (22);
        \end{scope}
    \end{tikzpicture}
    \end{center}
\end{example}

\subsection{Connectedness criteria} In \cite{Larson22}, Larson gives connectedness criteria for the graph $G_x$ given the only the data of the monomial $x$. We recall some of Larson's results here for comparison with our odd primary and $C_2$-equivariant results. 

Throughout this section we will assume $x$ is a given monomial in $A^\vee(n)$. Thus we can write 
\[
x = \xi_1^{r_1} \xi_2^{r_2} \cdots \xi_n^{r_n} \xi_{n + 1}^{r_{n + 1}}
\]
for some $r_i \in \{0,1,\cdots, 2^{n + 1 - i} - 1\}$. We will also make use of $x$ written { in standard form}, that is writing
\[
x = \prod_{i=1}^{n + 1} \prod_{m = 0}^{{n+1-i}} \xi_i^{c_{i, n + 1 - i - m} \cdot 2^{n + 1 - i - m}}.
\]

Recall that a graph is \textit{connected} if for any two distinct vertices there exists a path between them. Given two vertices of a graph, we can count the number of paths between of a given length. To count the number of acyclic paths between two vertices of a given length, say of $t$ edges, we define the following set. 

\begin{deff} \label{def:Tt}
    Suppose $t$ is an integer with $1 \leq t \leq n + 1$. Define $T_t$ to be the set of all $(t+1)$-tuples $(b_0, b_1, ,\cdots b_{t - 1}, b_t) $ where $b_i \in \{0, 1, \cdots, n\}$ {and} no two entries are the same.
\end{deff}

We can interpret tuples $(b_0, b_1, \cdots b_{t - 1}, b_t) \in T_t$ as corresponding to acyclic paths of length $t$ from the vertex $2^{b_0}$ to the vertex $2^{b_t}$. Specifically, the tuple $(b_0, b_1, \cdots b_{t - 1}, b_t)$ represents a path through the vertices $2^{b_0}, 2^{b_1},  \cdots , 2^{b_{t-1}}, 2^{b_t}$. Note that $b_{k - 1}$ is not necessarily less than $b_k$ so the path need not pass through the vertices of $G_x$ in ascending order.

Thus, if we are interested in indicating whether a particular acyclic path exists in our graph, we can begin by checking whether there is an edge between the vertices $2^{b_{k-1}}$ and $2^{b_k}$. This is indicated by the value of 

\[
a_{\max (b_{k-1}, b_k), \min (b_{k-1}, b_k)} = \begin{cases}
    1 & \text{ if there is an edge between } 2^{b_{k - 1}} \text{ and } 2^{b_k} \\
    0 & \text{ otherwise }
\end{cases},
\]
in the adjacency matrix $A_x$. Using the observation that $a_{l,k} = c_{k - l, l - 1}$ when $l < k$ (\ref{eq:acRel}), we can rewrite $a_{\max (b_{k-1}, b_k), \min (b_{k-1}, b_k)}$ in terms of the coefficients $c_{i,j}$ appearing in the standard form of $x$. Specifically,
\[
a_{\max (b_{k-1}, b_k), \min (b_{k-1}, b_k)} = c_{|b_k-b_{k-1}|, \max (b_{k-1}, b_k)-1}
\]
Thus the path corresponding to $(b_0, b_1, \cdots, b_{t - 1}, b_t)$ exists in our graph if and only if
\[
\prod_{k = 1}^{t} c_{|b_k-b_{k-1}|, \max (b_{k-1}, b_k)-1}  = 1. 
\]
Therefore, to count the total number of acyclic paths between two given vertices, say the vertex $2^{r}$ and the vertex $2^{s}$ in our graph $G_x$, we  make the following definitions.
\begin{deff} \label{def:T't}
    Given integers $r, s \in \{0,1, \cdots, n\}$, define $T_t'$ to be the set of all $(t + 1)$-tuples $(r, b_1, \cdots, b_{t -1 }, s)$ where $b_i \in \{ 0, 1, \cdots, n + 1\}$. 
\end{deff}
In particular, $T_t'$ is the subset of $T_t$ where the first element in each $(t + 1)$-tuple is set equal to $r$ and the last element in each $(t + 1)$-tuple is set equal to $s$.
\begin{deff} 
   Given integers $r, s \in \{0,1, \cdots, n\}$, let 
    \[
    { C(r, s)} : = \sum_{t = 1}^{n + 1} \sum_{T_t'} \prod_{k = 1}^t c_{|b_k-b_{k-1}|, \max (b_{k-1}, b_k)-1}.
    \]
\end{deff}
\color{black}
Here, the outer sum is taken over all possible lengths of paths between the vertices. Since there are $n + 2$ vertices, the longest path that is not a cycle has length $n + 1$. The inner sum is taken over $T_t'$, that is, over all the acyclic paths of length $t$ between the vertices $2^{r}$ and $2^{s}$.  

Using this notation, Larson proves the following connectedness criterion.
\begin{theorem}[{\cite[Theorem 1.1]{Larson22}}]
    The graph $G_x$ is connected if and only if the integers
    \[
    C(r,s) : = \sum_{t = 1}^{n + 1} \sum_{T_t'} \prod_{k = 1}^t c_{|b_k-b_{k-1}|, \max (b_{k-1}, b_k)-1}
    \]
    are positive for all integers $r$ and $s$ with $0 \leq r < s \leq n + 1$.
\end{theorem}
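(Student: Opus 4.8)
The plan is to reduce the statement to a single combinatorial identification: that $C(r,s)$ is exactly the number of (simple) paths in $G_x$ from the vertex $2^r$ to the vertex $2^s$. Once this is established, the theorem is essentially a restatement of the definition of connectedness.

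First I would unwind the definition of $C(r,s)$. A tuple $(b_0,b_1,\dots,b_t)\in T_t'$ has $b_0=r$, $b_t=s$, and pairwise distinct entries, so it encodes a candidate simple path $2^{b_0},2^{b_1},\dots,2^{b_t}$ through $t+1$ distinct vertices of $G_x$. By the adjacency-matrix identity \eqref{eq:acRel}, the factor $c_{|b_k-b_{k-1}|,\max(b_{k-1},b_k)-1}$ is the entry $a_{\max(b_{k-1},b_k),\min(b_{k-1},b_k)}$ of $A_x$, which --- since each $c_{i,j}$ is a base-$2$ digit, hence $0$ or $1$ --- is precisely the indicator of whether $\{2^{b_{k-1}},2^{b_k}\}$ is an edge of $G_x$. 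Hence the product $\prod_{k=1}^t c_{|b_k-b_{k-1}|,\max(b_{k-1},b_k)-1}$ equals $1$ if the tuple traces out an actual path in $G_x$ and $0$ otherwise. Distinct tuples trace out distinct paths, and conversely every simple path from $2^r$ to $2^s$ comes from exactly one tuple (record its sequence of vertices); since $G_x$ has $n+2$ vertices, every such path has at most $n+1$ edges, so letting $t$ range only over $1,\dots,n+1$ misses nothing. Therefore $C(r,s)$ is a nonnegative integer counting, with no repetition and no omission, the simple paths from $2^r$ to $2^s$; in particular $C(r,s)>0$ if and only if at least one such path exists.

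It then remains to compare with connectedness. By the conventions of \cref{sec:notation}, a path has no repeated vertices except possibly its endpoints (here the distinct vertices $2^r$ and $2^s$), and $G_x$ is connected precisely when there is a path between every pair of distinct vertices; by symmetry it suffices to test the pairs $(2^r,2^s)$ with $0\le r<s\le n+1$. Combining this with the previous paragraph: $G_x$ is connected $\iff$ for every such pair there is a simple path from $2^r$ to $2^s$ $\iff$ $C(r,s)\ge 1$ for all such pairs $\iff$ $C(r,s)>0$ for all such pairs. That is the asserted equivalence.

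The only step carrying real content is the exactness of the count in the second paragraph --- that every simple path is tallied once and only once, and that the truncation $t\le n+1$ is harmless. This is where working at the prime $2$ is used: it forces the digits $c_{i,j}$ into $\{0,1\}$, so each factor is a genuine $0/1$ edge-indicator rather than an edge multiplicity, and the product of such factors is again $0$ or $1$. The truncation bound is the elementary fact that a simple path in a graph on $n+2$ vertices uses at most $n+1$ edges. Everything beyond this is a transcription of definitions, so I do not anticipate a serious obstacle; the work is in setting up the dictionary between tuples and paths cleanly.
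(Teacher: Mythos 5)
Your proposal is correct and follows the same route the paper itself lays out in the discussion preceding the theorem: identify each tuple in $T_t'$ with a candidate simple path, use the identity $a_{\max(b_{k-1},b_k),\min(b_{k-1},b_k)}=c_{|b_k-b_{k-1}|,\max(b_{k-1},b_k)-1}$ to turn each factor into a $0/1$ edge indicator, and conclude that $C(r,s)$ counts the simple paths from $2^r$ to $2^s$, so that its positivity for all pairs is exactly connectedness. Your explicit remarks that the truncation $t\le n+1$ loses nothing and that $p=2$ is what makes the digits genuine indicators are the right points to emphasize and match the paper's reasoning.
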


\subsection{Trees and Hamilton cycles}

Letting $\alpha (m)$ denote the number of nonzero digits in the base $2$ expansion of an integer $m,$ Larson also proves:
\begin{theorem}[{\cite[Theorem 1.2]{Larson22}}]
    The graph $G_x$ is a tree if and only if $G_x$ is connected and 
    \[
    \sum_{i=1}^{n + 1} \alpha(r_i) = n + 1.
    \]
\end{theorem}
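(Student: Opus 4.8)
The plan is to reduce the statement to the standard fact from graph theory that a connected graph on $N$ vertices is a tree (equivalently, acyclic) precisely when it has exactly $N-1$ edges. Since the vertex set $V_x = \{2^0, 2^1, \cdots, 2^{n+1}\}$ has $n+2$ elements, once connectedness is granted the tree condition becomes equivalent to $|E_x| = n+1$. So the entire proof comes down to computing the number of edges of $G_x$ directly from the data of the monomial $x$.

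For this I would work from the standard form $x = \prod_{i=1}^{n+1}\prod_{m=0}^{i} \xi_i^{c_{i,\,n+1-i-m}\cdot 2^{n+1-i-m}}$ of \cref{sform1}. By definition of $G_x$, each factor $\xi_i^{c_{i,j}\cdot 2^j}$ contributes $c_{i,j}$ edges between the vertices $2^j$ and $2^{i+j}$, and since the $c_{i,j}$ are binary digits this means either no edge or a single edge. Moreover, distinct index pairs $(i,j)$ with $i\geq 1$ and $j\geq 0$ give distinct unordered pairs of endpoints: if $\{j,i+j\} = \{j',i'+j'\}$, then since $j<i+j$ and $j'<i'+j'$ one must have $j=j'$ and hence $i=i'$. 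Therefore $G_x$ is a simple graph with no repeated edges, and $|E_x|$ is exactly the number of pairs $(i,j)$ with $c_{i,j}=1$. For each fixed $i$ this count is the number of nonzero binary digits of $r_i$, namely $\alpha(r_i)$, so $|E_x| = \sum_{i=1}^{n+1}\alpha(r_i)$.

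Putting the pieces together: if $G_x$ is a tree it is connected by definition, and $n+1 = |V_x|-1 = |E_x| = \sum_{i=1}^{n+1}\alpha(r_i)$; conversely, if $G_x$ is connected and $\sum_{i=1}^{n+1}\alpha(r_i) = n+1$, then $|E_x| = |V_x|-1$, which forces $G_x$ to be acyclic and hence a tree. The only real content is the edge count, and the single point requiring care is verifying that $G_x$ is simple, so that $|E_x|$ equals the number of nonzero $c_{i,j}$ rather than a sum of edge multiplicities — but this is exactly the injectivity observation above. I anticipate no genuinely difficult step; the standard characterization of trees can either be cited or dispatched in one line by induction on the number of edges.
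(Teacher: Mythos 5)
Your proposal is correct and follows essentially the same route as the paper: it counts the edges of $G_x$ as $\sum_i \alpha(r_i)$ (one edge per nonzero binary digit, with the observation that distinct pairs $(i,j)$ give distinct endpoint pairs so no multiplicities arise at $p=2$) and then invokes the standard fact that a connected graph on $N$ vertices is a tree exactly when it has $N-1$ edges, which is precisely how the paper argues its odd-primary analogue (\cref{thm:treesOdd}). No gaps.
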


In \cite{Yearwood19}, Yearwood observed that the graphs $G_x$ can naturally be viewed as directed graphs with edges oriented in the direction of the larger vertex (i.e., $2^\alpha \to 2^\beta$ if $0 \leq \alpha < \beta \leq n + 1$). We will denote the directed version of the graph $G_x$ by $G_x^{\textup{dir}}$. With the directed version of Wood's construction in mind, Larson studied the unilaterality, the appropriate connectedness property for directed graphs (see \cref{sec:notation}). { To count the total number of acyclic directed paths between two given vertices, say the vertex $2^r$ and the vertex $2^s$, we make the following definitions. 

\begin{deff}\label{def:T''t}
     Given integers $r, s \in \{0,1, \cdots, n\}$, let $T_t''$ be the set of all $(t + 1)$-tuples $(b_0, b_1, \cdots, b_t)$ where $b_i \in \{ 0, 1, \cdots, n + 1\}$ and $r = b_0 < b_1 < \cdots < b_t = s$.
\end{deff}
In particular, $T_t''$ is the subset of $T_t'$ where the elements in each $(t + 1)$-tuple are strictly increasing. 
\begin{deff} 
    Given integers $r, s \in \{0,1, \cdots, n\}$, let 
    \[
    {U(r,s)}: = \sum_{t = 1}^{n + 1} \sum_{{T}_t''} \prod_{k = 1}^t c_{b_{k - 1} , b_k}.
    \]
\end{deff}
Then $U(r,s)$ is the directed graph analogue of $C(r,s)$ and thus we can think of the outer and inner sums similarly. 

Using this notation, Larson proves the following characterization of unilaterality.}

\begin{theorem}[{\cite[Theorem 1.3]{Larson22}}]
    The directed graph $G_x^\textup{dir}$ is unilateral if and only if ${U(r,s)}$
    is positive for all integers $r,s$ with $0 \leq r < s \leq n + 1$.
\end{theorem}

Larson also studied gave an algebraic criteron describing when a Hamilton cycle must occur in the graph $G_x$. 
\begin{theorem}[{\cite[Theorem 1.4]{Larson22}}]
    The graph $G_x$ has a Hamilton cycle if $n > 0$ and for every vertex $2^i$ of $G_x,$ 
    \[
    \# \{ 1 \leq k \leq j : c_{j, j - k} = 1  \} + \# \{1 \leq k \leq n + 1 - j : a_{j + k, j} = 1 \} \geq \frac{n}{2}.
    \]
    Moreover, the directed graph $G_x^\textup{dir}$ has a directed Hamilton path if and only if $x$ is divisible by $\xi_1^{2^{n + 1} - 1}$.
\end{theorem}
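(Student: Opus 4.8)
The plan is to establish the Hamilton cycle criterion by invoking a sufficient-condition theorem for Hamiltonicity from extremal graph theory applied to the reduced (simple) graph underlying $G_x$, and to handle the directed Hamilton path by a direct structural analysis. For the first assertion, I would first translate the two hypotheses into statements about vertex degrees in the simple graph $G_x^{\textup{R}}$. The quantity $\# \{ 1 \leq k \leq j : c_{j, j-k} = 1 \}$ counts the neighbors of the vertex $2^j$ that are smaller (these come from edges $\xi_{j-l}^{2^l}$ recorded in the standard form with $l < j$), while $\# \{1 \leq k \leq n+1-j : a_{j+k,j} = 1\}$ counts the neighbors larger than $2^j$; together they give exactly the degree of $2^j$ in $G_x^{\textup{R}}$. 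So the hypothesis says every vertex of the simple graph on $n+2$ vertices has degree at least $n/2$. Since $(n+2)/2 > n/2$ fails to reach the Dirac bound $\lceil (n+2)/2 \rceil$ in general, I would instead aim for an Ore-type or Chvátal–Erdős-type condition, or—more likely, given the specific combinatorial structure of these graphs—exploit the fact that $G_x^{\textup{R}}$ has a very rigid ``interval'' adjacency pattern: vertex $2^j$ is adjacent to $2^k$ precisely when the coefficient $c_{|k-j|, \max(k,j)-1}$ is nonzero, and the hypothesis forces enough of these to be present.

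The key steps, in order, would be: (1) record the degree identity $\deg_{G_x^{\textup{R}}}(2^j) = f_{\inn}(2^j) + f_{\out}(2^j)$ (in the notation of the odd-primary statement) and reduce hypothesis (b) to a minimum-degree bound; (2) observe that the ``short'' edges—those between consecutive vertices $2^j$ and $2^{j+1}$, corresponding to a nonzero $c_{1,j}$—play a special role, and analyze when the cycle $2^0 - 2^1 - \cdots - 2^{n+1} - 2^0$ or a near-variant is forced; (3) apply a Hamiltonicity criterion (I expect an argument in the spirit of Pósa's lemma or a rotation–extension argument tailored to the threshold-like structure of these graphs, rather than a black-box theorem) to conclude that the minimum-degree hypothesis on $n+2$ vertices, combined with $n>0$, yields a Hamilton cycle; and (4) keep careful track of the constant: the bound $n/2$ on a graph with $n+2$ vertices is slightly weaker than Dirac's $\lceil v/2 \rceil$, so the proof must genuinely use the structured adjacency pattern (the fact that $a_{l,k}$ depends only on $k-l$ and $\max$, making $G_x^{\textup{R}}$ resemble a circulant-like graph) and not merely the degree count. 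For the ``moreover'' clause, I would argue that a directed Hamilton path in $G_x^{\textup{dir}}$ must, because all edges point toward the larger vertex, visit the vertices $2^0, 2^1, \ldots, 2^{n+1}$ in strictly increasing order; hence it must use each consecutive edge $2^j \to 2^{j+1}$, which exists iff $c_{1,j} = 1$ for all $j$, i.e. iff $r_1 = 2^{n+1} - 1$, equivalently $\xi_1^{2^{n+1}-1} \mid x$—and conversely that monomial manifestly supplies the path.

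I would carry out step (3) by the following concrete route: set $N = n+2$ (the number of vertices) and suppose for contradiction that $G_x^{\textup{R}}$ is not Hamiltonian; take a longest path $P = v_0 v_1 \cdots v_\ell$; by maximality all neighbors of $v_0$ and of $v_\ell$ lie on $P$; if $v_0$ and $v_\ell$ had a common ``Pósa rotation'' index we would get a cycle through all of $P$, and then connectivity (which follows from hypothesis (b) giving minimum degree $\geq n/2 \geq 1$, plus the interval structure) would extend it, contradiction. The counting in the Pósa argument needs $\deg(v_0) + \deg(v_\ell) \geq \ell$, which the bound $\deg \geq n/2$ does not immediately give when $\ell$ is close to $N-1 = n+1$; this is precisely where I would insert the structural observation that among any set of $n/2$ neighbors of a vertex $2^j$ in this graph, the indices are forced to spread out in a way that guarantees a usable rotation partner.

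The main obstacle I anticipate is exactly this gap between the stated degree threshold $n/2$ and the Dirac/Ore threshold for a graph on $n+2$ vertices: a purely degree-theoretic argument does not suffice, so the heart of the proof must leverage the rigid adjacency structure of $G_x^{\textup{R}}$ (that edges are governed by $c_{|k-j|,\max(k,j)-1}$, a Toeplitz-like pattern), and making that leverage precise—identifying which configurations of nonzero $c_{i,j}$ the hypothesis forces and showing each yields a Hamilton cycle—is the delicate part. A secondary subtlety is correctly matching the two counting expressions in hypothesis (b) to $f_{\inn} + f_{\out}$ across the odd-primary and $p=2$ notations, but that is bookkeeping rather than a genuine difficulty.
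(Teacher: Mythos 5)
This theorem is recalled from Larson and not reproved here, but the paper's proofs of its direct analogues (\cref{thm:oddprimeHamilton} and the Hamiltonian-directed-path theorems in \cref{sec:Trees&HamiltonOdd}) show the intended argument, and it is far shorter than what you propose. For the cycle criterion the route is: pass to the reduced simple graph $G_x^{\textup{R}}$ (a Hamilton cycle there gives one in $G_x$), identify the two counts in the hypothesis as the in- and out-degree of the vertex $2^j$ in $G_x^{\textup{R}}$, so that their sum is its total degree, and then quote Dirac's theorem as a black box. Your steps (1) and (2) match this, but your step (3) --- the P\'osa rotation--extension argument --- is not what is used, and, more importantly, you never actually carry it out: you explicitly defer the crucial point (that the hypothesis forces the neighbors of a vertex to ``spread out'' enough to guarantee a rotation partner) to an unproven ``structural observation.'' As written, your proof of the main clause is therefore incomplete. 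Your diagnosis of the constant is a fair catch --- the graph has $n+2$ vertices while the hypothesis only gives degree $\geq n/2$, one short of Dirac's threshold $(n+2)/2$, and the source's own statement of Dirac's theorem silently conflates the ``$n$'' counting vertices with the ``$n$'' indexing $A^\vee(n)$ --- but the intended resolution is simply to apply Dirac, not to build a bespoke argument. Either the threshold is meant to clear Dirac's bound, in which case your machinery is unnecessary, or it genuinely is $n/2$, in which case you owe the complete structural proof you only sketch; in neither case does your proposal, as it stands, establish the first assertion.

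The ``moreover'' clause you handle exactly as the paper handles its analogues: since every directed edge points to the larger vertex, a directed Hamilton path must traverse $2^0, 2^1, \ldots, 2^{n+1}$ in increasing order, hence must use every consecutive edge $2^j \to 2^{j+1}$; these all exist precisely when every $c_{1,j}=1$, i.e.\ $r_1 = 2^{n+1}-1$, i.e.\ $\xi_1^{2^{n+1}-1} \mid x$, and conversely that divisibility manifestly supplies the path. That part of your proposal is correct and coincides with the paper's argument.
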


\subsection{The Hopf algebra structure }
Following Lemmas 3.1.7 and 3.1.8 of \cite{Yearwood19}, Larson also gave a graph theoretic interpretation of the coproduct and antipode on monomials $\xi_i^{2^j} \in A^\vee(n)$. 
\begin{theorem}[{\cite[Theorem 1.5]{Larson22}}] \label{thm:LarsonCoproduct}
    The coproduct $\psi(\xi_i^{2^j}) \in A^\vee(n)$ is the sum of tensors of all pairs of edges that make length $2$ directed paths from the vertex $2^j$ to the vertex $2^{i +j}$ 
     in the complete graph on the vertices $\{2^0, 2^1, 2^2, \cdots, 2^{n + 1}\}$ considered as a directed graph\color{black}. Moreover, the antipode $c(\xi_i^{2^j}) \in A^\vee(n)$ is the sum of all directed paths from the vertex $2^j$ to the vertex $2^{i + j}$. 
\end{theorem}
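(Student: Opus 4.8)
The plan is to reduce both statements to the explicit formulas for $\psi$ and $c$ on the generators $\xi_i$, push them forward along the Frobenius (which is additive in characteristic $2$), and then match the resulting monomials against directed paths on the vertex set $\{2^0,\dots,2^{n+1}\}$, where the edge from $2^b$ to $2^{a+b}$ is the monomial $\xi_a^{2^b}$ and a directed path is a strictly increasing chain of vertices. Here I read the ``directed paths from $2^j$ to $2^{i+j}$'' in the statement as paths built from these standard edge labels on the ambient vertex set.

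For the coproduct, start from $\psi(\xi_i)=\sum_{k=0}^{i}\xi_{i-k}^{2^k}\otimes\xi_k$. Since $A^\vee$ is a commutative $\mF_2$-algebra and $\psi$ is a ring map, squaring is additive on $A^\vee\otimes A^\vee$, so $\psi(\xi_i^{2^j})=\psi(\xi_i)^{2^j}=\sum_{k=0}^{i}\xi_{i-k}^{2^{k+j}}\otimes\xi_k^{2^j}$. In the $k$-th summand the second factor $\xi_k^{2^j}$ is exactly the edge $2^j\to 2^{k+j}$ and the first factor $\xi_{i-k}^{2^{k+j}}$ is the edge $2^{k+j}\to 2^{i+j}$; together they form the length-$2$ directed path $2^j\to 2^{k+j}\to 2^{i+j}$, and each such path occurs exactly once as $k$ runs over $0,\dots,i$ (the endpoints $k=0,i$ giving the degenerate terms $1\otimes\xi_i^{2^j}$ and $\xi_i^{2^j}\otimes 1$, matching the convention that $\xi_0=1$ is a length-$0$ edge). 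Compatibility with the quotient is then immediate: $\xi_a^{2^b}$ survives in $A^\vee(n)$ iff $a+b\le n+1$, so as long as $\xi_i^{2^j}\neq 0$ every vertex appearing in every such path lies in $\{2^0,\dots,2^{n+1}\}$ and no term is gained or lost.

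For the antipode, I would induct on $i$ using the defining recursion $\sum_{k=0}^{i}\xi_{i-k}^{2^k}c(\xi_k)=0$, i.e.\ (over $\mF_2$) $c(\xi_i)=\sum_{k=0}^{i-1}\xi_{i-k}^{2^k}c(\xi_k)$. Assuming inductively that $c(\xi_k)$ is the sum over directed paths $2^0\to 2^k$ of the products of their edge labels, the term $\xi_{i-k}^{2^k}c(\xi_k)$ is the sum over directed paths $2^0\to 2^k\to 2^i$ with penultimate vertex $2^k$; summing over $k=0,\dots,i-1$ classifies every directed path $2^0\to 2^i$ by its penultimate vertex, so $c(\xi_i)$ is the sum over all directed paths $2^0\to 2^i$. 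Applying the Frobenius again, $c(\xi_i^{2^j})=c(\xi_i)^{2^j}$, and squaring $j$ times sends a label-product $\prod_r\xi_{a_r}^{2^{b_r}}$ to $\prod_r\xi_{a_r}^{2^{b_r+j}}$, which is the label-product of the path obtained by raising every vertex index by $j$; this shift is a bijection between directed paths $2^0\to 2^i$ and directed paths $2^j\to 2^{i+j}$, with all intermediate vertices bounded by $2^{i+j}\le 2^{n+1}$, so again nothing is lost in $A^\vee(n)$. (As an alternative to the induction one can quote Milnor's closed form \cite[Lemma 10]{Milnor58} and observe that ordered partitions of $i$ correspond to directed paths $2^0\to 2^i$ via partial sums.)

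The computations themselves are routine; the real content is organizational, namely pinning down the precise dictionary between edge-label monomials and directed paths—including the degenerate endpoint terms, the $\xi_0=1$ convention, and the index-shift bijection—and checking that truncating to $A^\vee(n)$ neither deletes a valid path nor creates a spurious one. I expect this shift bijection together with the quotient-compatibility bookkeeping to be the fiddliest point; everything else follows from additivity of squaring in characteristic $2$ and the recursion for $c$.
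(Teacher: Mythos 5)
Your proposal is correct and takes essentially the same route as the paper: the statement itself is quoted from Larson \cite{Larson22}, and the paper's own proofs of the odd-primary analogues (\cref{thm:coproductOdd} and \cref{thm:AntipodeOdd}) argue exactly as you do, matching each coproduct summand with the length-$2$ path through the corresponding intermediate vertex and inducting on the antipode recursion by classifying directed paths according to their penultimate vertex (your alternative via Milnor's formula \cite[Lemma 10]{Milnor58} is Larson's original route). Your extra bookkeeping about the Frobenius index shift and the survival of all relevant terms in the quotient $A^\vee(n)$ is correct and consistent with the paper's conventions.
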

And as a corollary, Larson obtained another characterization of the unilaterality of $G_x^\textup{dir}$. 
\begin{cor}[{\cite[Corollary 1.6]{Larson22}}]
    For $x \in A^\vee(n)$, the graph $G_x^\textup{dir}$ is unilateral if and only if for each $\xi_i^{2^j} \in A^\vee (n)$ at least one summand of $c(\xi_i^{2^j})$ is a factor of $x$. 
\end{cor}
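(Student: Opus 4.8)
The plan is to unwind both sides of the claimed equivalence into the single statement ``for all $0 \le r < s \le n+1$ there is a directed path from $2^r$ to $2^s$ in $G_x^{\textup{dir}}$'', and then to observe that this statement is exactly unilaterality of $G_x^{\textup{dir}}$. The argument is essentially a dictionary lookup using \cref{thm:LarsonCoproduct}, together with one elementary observation about edge orientations.

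First I would record the reduction on the unilaterality side. The edge $\xi_a^{2^b}$ joins $2^b$ to $2^{a+b}$ with $a \ge 1$, so every directed edge of $G_x^{\textup{dir}}$ runs from a strictly smaller vertex to a strictly larger one; hence any directed path runs from a smaller vertex to a larger one, and in particular $G_x^{\textup{dir}}$ can never contain directed paths in both directions between a given pair of vertices. Therefore $G_x^{\textup{dir}}$ is unilateral if and only if for every pair of indices $0 \le r < s \le n+1$ there is a directed path from $2^r$ to $2^s$ in $G_x^{\textup{dir}}$. I would also note that $\xi_i^{2^j}$ is a nonzero monomial of $A^\vee(n)$ exactly when $i \ge 1$ and $i + j \le n+1$, so the assignment $(r,s) := (j,\, i+j)$ is a bijection between the relevant monomials $\xi_i^{2^j} \in A^\vee(n)$ and the pairs $0 \le r < s \le n+1$, carrying the endpoints $2^j,\, 2^{i+j}$ to $2^r,\, 2^s$.

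The heart of the argument is then the translation through \cref{thm:LarsonCoproduct}: $c(\xi_i^{2^j})$ is the sum over all directed paths $P$ from $2^j$ to $2^{i+j}$ of the monomial $m_P := \prod_{e \in P} e$, the product of the edge-labels along $P$. I would prove that a summand $m_P$ is a factor of $x$ if and only if every edge of $P$ lies in $E_x$, i.e.\ $P$ is a directed path of $G_x^{\textup{dir}}$. This holds because, in standard form, each edge-label $\xi_a^{2^b}$ occurs in $x$ with multiplicity at most one (the coefficients $c_{i,j}$ are binary digits) while the edges of a path are pairwise distinct, so $m_P$ is squarefree in the standard-form generators $\xi_a^{2^b}$ and divides $x$ precisely when its support is contained in $E_x$; along the way this also confirms that distinct directed paths give distinct monomials, so no mod-$2$ cancellation occurs in the packaged form of the antipode. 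Consequently ``at least one summand of $c(\xi_i^{2^j})$ is a factor of $x$'' is equivalent to ``there is a directed path from $2^j$ to $2^{i+j}$ in $G_x^{\textup{dir}}$''.

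Finally I would assemble the pieces: ranging this last equivalence over all $\xi_i^{2^j} \in A^\vee(n)$, equivalently over all pairs $(r,s)$ with $0 \le r < s \le n+1$, reproduces verbatim the condition characterizing unilaterality from the first step. I expect the only step requiring real care to be the middle one --- making precise what ``factor of $x$'' means inside the quotient ring and checking that, once the antipode's terms are rewritten in the standard-form generators $\xi_a^{2^b}$, divisibility by a summand collapses cleanly to containment of the corresponding edge set --- while the reduction to one-directional reachability and the bijection on index pairs are routine.
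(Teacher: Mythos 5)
Your overall route --- reduce unilaterality to one-directional reachability for every pair $0 \le r < s \le n+1$, match such pairs with the monomials $\xi_i^{2^j}$, and translate reachability through \cref{thm:LarsonCoproduct} --- is exactly the argument the paper has in mind (it is the same dictionary used to prove the odd-primary analogue, \cref{cor:unilateralityOdd}). The genuine gap is in your middle step, where you assert that a summand $m_P$ of $c(\xi_i^{2^j})$ divides $x$ if and only if every edge of $P$ lies in $E_x$, justified by squarefreeness of $m_P$ in the standard-form generators. Squarefreeness in the formal symbols $\xi_a^{2^b}$ does not control ring divisibility, because distinct symbols are powers of the same variable. Concretely, take $n=1$ and $x=\xi_1^2\xi_2\in A^\vee(1)$: the graph $G_x$ has only the edges $\xi_1^2\colon 2^1\to 2^2$ and $\xi_2\colon 2^0\to 2^2$, so there is no directed path between $2^0$ and $2^1$ and $G_x^{\textup{dir}}$ is not unilateral; yet the summand $\xi_1$ of $c(\xi_1)$ divides $x$ in the ring (as do $\xi_1^2$ and $\xi_2$ for the other two relevant monomials), so with ``factor'' read as ring divisibility your claimed equivalence --- and indeed the corollary itself --- would be false. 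So the step as justified does fail, and it is precisely the point you flagged as needing care.

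The repair is to read ``factor of $x$'' the way Larson and this paper use it: a subproduct of the standard form of $x$, i.e.\ every symbol $\xi_a^{2^b}$ occurring in $m_P$ must itself occur in the standard form of $x$ (equivalently $c_{a,b}=1$), which is by definition the statement that every edge of $P$ lies in $E_x$. With that reading your middle step becomes immediate (your observation that distinct directed paths yield distinct monomials, via the binary expansion of each exponent, is correct and worth keeping, as is the one-way implication that edge containment gives divisibility), and the remaining pieces --- the orientation argument showing unilaterality is equivalent to upward reachability for all pairs, and the bijection $(r,s)=(j,i+j)$ --- are routine and coincide with the paper's proof.
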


\section{A graphical construction at odd primes} \label{sec:oddprimes}

We now define a graphical construction for the mod $p$ dual Steenrod algebra quotients $A^\vee(n)$, where $p$ is an odd prime, with the goal of extending Wood's construction at the prime $p = 2$. Recall the mod $p$ dual Steenrod algebra
$$A^\vee
=\mF_p[\xi_1,\xi_2,\dots]\otimes E[\tau_1,\tau_2,\dots],$$ where $|\xi_i|=2(p^i-1)$ and $|\tau_i|=2p^i-1$. Since we work one prime at a time, we suppress $p$ from our notation simply writing $A^\vee$ for the mod $p$ dual Steenrod algebra. 

For each integer $n\ge1$, we study graphs arising from monomials in the quotients
$$A^\vee(n) = \bigslant{\mathbb{F}_p[\xi_1, \xi_2, \cdots, \xi_n, \tau_o, \tau_1, \cdots, \tau_n]}{(\xi_1^{p^n}, \xi_2^{p^{n-1}}, \cdots, \xi_n^{p}, \tau_0^2, \tau_1^2, \cdots, \tau_n^2)}.$$ 

Since addition is represented by disjoint union of graphs, it is sufficient to restrict our study to monic monomials $x \in A^\vee (n)$. Given a monic monomial $x \in A^\vee(n)$, we can write 
$$x = \tau_0^{\epsilon_0} \tau_1^{\epsilon_1} \cdots \tau_n^{\epsilon_n} \xi_1^{r_1} \xi_2^{r_2} \cdots \xi_{n}^{r_{n}}$$ 
where $\epsilon_i\in\{0,1\}$ and $0\le r_i\le p^{n+1-i}-1$. Following Wood's construction, we will utilize the $p$-adic expansion of $r_i$, 
$$r_i=\sum_{m=0}^{n-i}c_{i,n-i-m}\cdot p^{n-i-m}.$$
Extending \cref{sform1}, we make the following definition. 
\begin{deff}\label{def:StandardFormOdd}
    For a monic monomial $x\in A^\vee(n)$, the \textit{\textbf{standard form}} of $x$ is 
    $$\tau_0^{\epsilon_0}\tau_1^{\epsilon_1}\cdots\tau_n^{\epsilon_n}\prod_{i=1}^n\prod_{m=0}^{n-i}\xi_i^{c_{i,n-i-m}\cdot p^{n-i-m}},
    $$ where the exponents on the $\xi_i$ are expanded into their $p$-adic expansion.
\end{deff} 

Given a monic monomial $x\in A^\vee(n)$, we define the graph $G_x = (V_x, E_x)$ setting the ordered vertex set $V_x=\left\{1, 2\cdot p^0, 2\cdot p^1, \dots, 2\cdot p^n\right\}$ and the edge set
\[
E_x=\left\{\tau_i\colon 1\to 2p^i \right\}_{0\le i\le n} {\cup}\left\{\xi_i^{c_{i,j}\cdot p^j}\colon 2p^j\xrightarrow{c_{i,j}} 2p^{i + j} \right\}_{\substack{1\le i\le n \\0\le j\le n-i}}
\]
where each $\tau_i$ denotes an edge connecting vertices $1$ and $2p^i$ and each $\xi_i^{c_{i,j}\cdot p^j}$ denotes $c_{i,j}$ edges connecting vertices $2p^j$ and $2p^{i + j}$. Notice that, similarly to the $p = 2$ case,  the degree $|\xi_i^{2^j} |$ of the edge $\xi_i^{2^j}$, that is, the grading of $\xi_i^{2^j}$ when viewed as an element of $A^\vee(n)$, is the absolute value of the difference of its ends. Additionally, the degree $| \tau_i |$ of the edge $\tau_i$ is the absolute value of the difference of its ends. \color{black}

\begin{example}
    The monomial $x=\xi_1^{2p}\xi_2 \in A^\vee (2)$ has associated graph $G_x$:
    \begin{center}
    \begin{tikzpicture}
        \begin{scope}[every node/.style={circle,thick,draw}]
            \node (1) at (0,2) {$\phantom{m}1\phantom{m}$};
            \node (20) at (2,0) {$2\cdot p^0$};
            \node (21) at (0,-2) {$2\cdot p^1$};
            \node (22) at (-2,0) {$2\cdot p^2$};
        \end{scope}
        \begin{scope}[>={Stealth[black]},
                      every node/.style={fill=white,circle},
                      every edge/.style={draw, thick}]
            \path [above] (20) edge node {$\xi_2$} (22);
        \end{scope}
        \draw[-, double distance=10pt, thick] (21) --
             node[above right]{$\xi_1^{p}$}
             node[below left]{$\xi_1^{p}$} (22);
    \end{tikzpicture}
    \end{center}
    Notice that in the graph $G_x$, there are $2 = c_{1,1}$ edges connecting vertices $2p^1$ and $2p^2$. These correspond to the two factors of $\xi_1^p$. In general, our graphs may have up to $p-1$ factors of $\xi_1^p$, in which case there would be $p-1$ edges connecting the vertices $2p^1$ and $2p^2$.  
\end{example}    

\begin{example}
    The monomial $x=\tau_0\tau_1\tau_2 \in A^\vee(2)$ has associated graph $G_{\tau_0\tau_1\tau_2}$:
    \begin{center}
        \begin{tikzpicture}
        \begin{scope}[every node/.style={circle,thick,draw}]
            \node (1) at (0,2) {$\phantom{m}1\phantom{m}$};
            \node (20) at (2,0) {$2\cdot p^0$};
            \node (21) at (0,-2) {$2\cdot p^1$};
            \node (22) at (-2,0) {$2\cdot p^2$};
        \end{scope}
        \begin{scope}[>={Stealth[black]},
                      every node/.style={fill=white,circle},
                      every edge/.style={draw, thick}]
            \path [above right] (1) edge node {$\tau_0$} (20);
            \path [left] (1) edge node {$\tau_1$} (21);
            \path [above left] (1) edge node {$\tau_2$} (22);
        \end{scope}
    \end{tikzpicture}
    \end{center}
\end{example}

Consider a monic monomial $x \in A^\vee(n)$. We observe that the entries in the upper triangle of the adjacency matrix $A_{x}$, that is $a_{l,k}$ where $l<k$, are given by
\begin{equation} \label{eq:acOddRel}
a_{l,k}=
\begin{cases}
    \epsilon_{k-2} & l=1 \\
    c_{k-l,l-2} & 1<l<k
\end{cases}.
\end{equation}
It may be helpful to recall that the values of $\epsilon_{k - 2}$ and $c_{k - l, l - 2}$ come from the exponents of $x$ in standard form (\cref{def:StandardFormOdd}). Therefore,
\begin{equation} \label{eq:AdjMatrixOdd}
    A_{x}=\begin{bmatrix}
    0 & \epsilon_0 & \epsilon_1 & \epsilon_2 & & \epsilon_{n-1} & \epsilon_n \\
    \epsilon_0 & 0 & c_{1,0} & c_{2,0} & & c_{n-1,0} & c_{n,0} \\
    \epsilon_1 & c_{1,0} & 0 & c_{1,1} & & c_{n-2,1} & c_{n-1,1} \\
    \epsilon_2 & c_{2,0} & c_{1,1} & 0 & & c_{n-3,2} & c_{n-2,2} \\
     &  &  &  & \ddots & \\
    \epsilon_{n-1} & c_{n-1,0} & c_{n-2,1} & c_{n-3,2} & & 0 & c_{1,n-1} \\
    \epsilon_n & c_{n,0} & c_{n-1,1} & c_{n-2,2} & & c_{1,n-1} & 0 \\
\end{bmatrix}
\end{equation}
Notice that the $i^{\text{th}}$ superdiagonal, excluding the first row, contains the coefficients of the $p$-adic expansion for $\xi_i$. Reading $i^{\text{th}}$ superdiagonal from bottom to top yields the base $p$ expansion of $r_i$:
$$r_i={\left(c_{i,n-i} \, c_{i,n-i-1} \, \dots \, c_{i,1} \, c_{i,0} \right)_p}.$$

\subsection{Examples} In this sections, all examples have prime $p$ set equal to $3$.

\begin{example}[A maximally complete graph] 
    Consider the monomial 
    \[
    x = \tau_0 \tau_1 \tau_2 \tau_3 \xi_1^{26} \xi_2^8 \xi_3^2 \in A^\vee (3).
    \]
    Rewriting $x$ in standard form yields $$x =\tau_0\tau_1\tau_2\tau_3\xi_1^{ 2 \color{black} \cdot 3^2}\xi_1^{ 2 \color{black} \cdot 3^1}\xi_1^{ 2 \color{black} \cdot 3^0}\xi_2^{ 2 \color{black} \cdot3^1}\xi_2^{ 2 \color{black} \cdot3^0}\xi_3^{ 2 \color{black} \cdot3^0} \in A^\vee(3)$$
    with associated graph and adjacency matrix depicted below.
    \vspace{1em}
    \begin{center}
        \begin{minipage}[c]{.55\textwidth}
            \begin{tikzpicture}[scale=.75]
                \begin{scope}[every node/.style={circle,thick,draw}]
                    \node (1) at (0,3) {$\phantom{m}1\phantom{m}$};
                    \node (20) at (3.5,0) {$2\cdot 3^0$};
                    \node (21) at (2.5,-3.5) {$2\cdot 3^1$};
                    \node (22) at (-2.5,-3.5) {$2\cdot 3^2$};
                    \node (23) at (-3.5,0) {$2\cdot 3^3$};
                \end{scope}
                \begin{scope}[>={Stealth[black]},
                              every node/.style={},
                              every edge/.style={draw, thick}]
                    \path [above right] (1) edge node {$\tau_0$} (20);
                    \path [below right] (1) edge node {$\tau_1$} (21);
                    \path [below left] (1) edge node {$\tau_2$} (22);
                    \path [above left] (1) edge node {$\tau_3$} (23);
                \end{scope}
                    \draw[-, double distance = 2.5pt, thick] (20) --
                        node[above left]{}
                        node[below right]{$\xi_1^{{2}\cdot3^0}$} (21);
                    \draw[-, double distance = 2.5pt, thick] (21) --
                        node[above]{}
                        node[below]{$\xi_1^{{2}\cdot3^1}$} (22);
                    \draw[-, double distance = 2.5pt, thick] (22) --
                        node[below left]{$\xi_1^{{2}\cdot3^2}$}
                        node[above right]{} (23);
                    \draw[-, double distance = 2.5pt, thick] (20) --
                        node[above,xshift=2pt]{$\xi_2^{{2}\cdot3^0}$}
                        node[below right]{} (22);
                    \draw[-, double distance = 2.5pt, thick] (21) --
                        node[below]{}
                        node[above,yshift=6pt]{$\xi_2^{{2}\cdot3^1}$} (23);
                    \draw[-, double distance = 2.5 pt, thick] (20) --
                        node[above]{$\xi_3^{{2}\cdot3^0}$}
                        node[below]{} (23);
            \end{tikzpicture}
        \end{minipage}
        \hspace{1cm}
        \begin{minipage}[c]{.3\textwidth}
            $A_{x}=\begin{bmatrix}
                0 & 1 & 1 & 1 & 1 \\
                1 & 0 & { 2} & {2} & { 2} \\
                1 & {2} & 0 & { 2} & { 2} \\
                1 & { 2} & { 2} & 0 & { 2} \\
                1 & { 2} & { 2} & { 2} & 0 \\
            \end{bmatrix}$
        \end{minipage}
    \end{center}
\end{example}

\begin{example}[A minimally complete graph] 
    Consider the monomial 
    \[
    x = \tau_0 \tau_1 \tau_2 \tau_3 \xi_1^{13} \xi_2^4 \xi_3 \in A^\vee(3).
    \]
    Rewriting $x$ in standard form yields
   $$\tau_0 \tau_1 \tau_2 \tau_3 \xi_1^{3^2} \xi_1^{3^1} \xi_1^{3^0} \xi_2^{3^1} \xi_2^{3^0} \xi_3^{3^0} \in A^\vee(3)$$
   with associated graph and adjacency matrix depicted below. 
   \begin{center}
        \begin{minipage}[c]{.55\textwidth}
            \begin{tikzpicture}[scale=.75]
                \begin{scope}[every node/.style={circle,thick,draw}]
                    \node (1) at (0,2.5) {$\phantom{m}1\phantom{m}$};
                    \node (20) at (3,0) {$2\cdot 3^0$};
                    \node (21) at (2,-3) {$2\cdot 3^1$};
                    \node (22) at (-2,-3) {$2\cdot 3^2$};
                    \node (23) at (-3,0) {$2\cdot 3^3$};
                \end{scope}
                \begin{scope}[>={Stealth[black]},
                              every node/.style={},
                              every edge/.style={draw, thick}]
                    \path [above right] (1) edge node {$\tau_0$} (20);
                    \path [below right] (1) edge node {$\tau_1$} (21);
                    \path [below left] (1) edge node {$\tau_2$} (22);
                    \path [above left] (1) edge node {$\tau_3$} (23);
                    \path [right] (20) edge node {\color{black} $\xi_1^{3^0}$} (21);
                    \path [below] (21) edge node {\color{black} $\xi_1^{3^1}$} (22);
                    \path [left] (22) edge node {\color{black} $\xi_1^{3^3}$} (23);
                    \path [above] (20) edge node {\color{black} $\xi_2^{3^0}$} (22);
                    \path [above] (21) edge node {\color{black} $\xi_2^{3^1}$} (23);
                    \path [above] (20) edge node {\color{black} $\xi_3^{3^0}$} (23);
                \end{scope}
            \end{tikzpicture}
        \end{minipage}
        \hspace{1cm}
        \begin{minipage}[c]{.3\textwidth}
            $A_{x}=\begin{bmatrix}
                0 & 1 & 1 & 1 & 1 \\
                1 & 0 & 1 & 1 & 1 \\
                1 & 1 & 0 & 1 & 1 \\
                1 & 1 & 1 & 0 & 1 \\
                1 & 1 & 1 & 1 & 0 \\
            \end{bmatrix}$
        \end{minipage}
    \end{center}
\end{example}

\begin{example}[A connected graph] 
    Consider the monomial $x = \tau_0 \tau_3 \xi_1^{4} \xi_2^6 \in A^\vee(3)$. Rewriting $x$ in standard form yields
   $$\tau_0 \tau_3 \xi_1^{3^1} \xi_1^{3^0} \xi_2^{2\cdot 3^1} \in A^\vee(3)$$
   with associated graph and adjacency matrix depicted below. 
   \begin{center}
        \begin{minipage}[c]{.55\textwidth}
            \begin{tikzpicture}[scale=.9]
                \begin{scope}[every node/.style={circle,thick,draw}]
                    \node (1) at (0,2) {$\phantom{m}1\phantom{m}$};
                    \node (20) at (2.5,0) {$2\cdot 3^0$};
                    \node (21) at (1.5,-2.5) {$2\cdot 3^1$};
                    \node (22) at (-1.5,-2.5) {$2\cdot 3^2$};
                    \node (23) at (-2.5,0) {$2\cdot 3^3$};
                \end{scope}
                \begin{scope}[>={Stealth[black]},
                              every node/.style={},
                              every edge/.style={draw, thick}]
                    \path [above right] (1) edge node {$\tau_0$} (20);
                    \path [above left] (1) edge node {$\tau_3$} (23);
                    \path [below] (21) edge node {\color{black} $\xi_1^{3^1}$} (22);
                    \path [right] (20) edge node {\color{black} $\xi_1^{3^0}$} (21);
                \end{scope}
                     \draw[-, double distance = 2.5pt, thick] (21) --
                        node[above right]{$\xi_2^{{2}\cdot 3^1}$}
                        node[below left]{} (23);
            \end{tikzpicture}
        \end{minipage}
        \hspace{1cm}
        \begin{minipage}[c]{.3\textwidth}
            $A_{x}=\begin{bmatrix}
                0 & 1 & 0 & 0 & 1 \\
                1 & 0 & 1 & 0 & 0 \\
                0 & 1 & 0 & 1 & { 2} \\
                0 & 0 & 1 & 0 & 0 \\
                1 & 0 & { 2} & 0 & 0 \\
            \end{bmatrix}$
        \end{minipage}
    \end{center}
\end{example}

\begin{example}[A tree graph]
    Consider the monomial $x = \tau_0 \tau_3 \xi_2^4 \in A^\vee(3)$. Rewriting $x$ in standard form yields
   $$\tau_0 \tau_3 \xi_2^{3^1} \xi_2^{3^0} \in A^\vee(3)$$
   with associated graph and adjacency matrix depicted below. 
   \begin{center}
        \begin{minipage}[c]{.55\textwidth}
            \begin{tikzpicture}[scale=.9]
                \begin{scope}[every node/.style={circle,thick,draw}]
                    \node (1) at (0,2) {$\phantom{m}1\phantom{m}$};
                    \node (20) at (2.5,0) {$2\cdot 3^0$};
                    \node (21) at (1.5,-2.5) {$2\cdot 3^1$};
                    \node (22) at (-1.5,-2.5) {$2\cdot 3^2$};
                    \node (23) at (-2.5,0) {$2\cdot 3^3$};
                \end{scope}
                \begin{scope}[>={Stealth[black]},
                              every node/.style={fill=white,circle},
                              every edge/.style={draw, thick}]
                    \path [above right] (1) edge node {$\tau_0$} (20);
                    \path [above left] (1) edge node {$\tau_3$} (23);
                    \path [above right] (20) edge node {\color{black} $\xi_2^{3^0}$} (22);
                    \path [above left] (21) edge node {\color{black} $\xi_2^{3^1}$} (23);
                \end{scope}
            \end{tikzpicture}
        \end{minipage}
        \hspace{1cm}
        \begin{minipage}[c]{.3\textwidth}
            $A_{x}=\begin{bmatrix}
                0 & 1 & 0 & 0 & 1 \\
                1 & 0 & 0 & 1 & 0 \\
                0 & 0 & 0 & 0 & 1 \\
                0 & 1 & 0 & 0 & 0 \\
                1 & 0 & 1 & 0 & 0 \\
            \end{bmatrix}$
        \end{minipage}
    \end{center}
\end{example}

\begin{example}[A graph with a Hamilton path]
    Consider the monomial $x = \tau_0 \tau_3 \xi_1^{13} \in A^\vee(3)$. Rewriting $x$ in standard form yields
   $$\tau_0 \tau_3  \xi_1^{3^2}\xi_1^{3^1}\xi_1^{3^0}  \in A^\vee(3)$$
   with associated graph and adjacency matrix depicted below. 
   \begin{center}
        \begin{minipage}[c]{.55\textwidth}
            \begin{tikzpicture}[scale=.9]
                \begin{scope}[every node/.style={circle,thick,draw}]
                    \node (1) at (0,2) {$\phantom{m}1\phantom{m}$};
                    \node (20) at (2.5,0) {$2\cdot 3^0$};
                    \node (21) at (1.5,-2.5) {$2\cdot 3^1$};
                    \node (22) at (-1.5,-2.5) {$2\cdot 3^2$};
                    \node (23) at (-2.5,0) {$2\cdot 3^3$};
                \end{scope}
                \begin{scope}[>={Stealth[black]},
                              every node/.style={},
                              every edge/.style={draw, thick}]
                    \path [above right] (1) edge node {$\tau_0$} (20);
                    \path [above left] (1) edge node {$\tau_3$} (23);
                    \path [right] (20) edge node {\color{black} $\xi_1^{3^0}$} (21);
                    \path [below] (21) edge node {\color{black} $\xi_1^{3^1}$} (22);
                    \path [left] (22) edge node {\color{black} $\xi_1^{3^2}$} (23);
                \end{scope}
            \end{tikzpicture}
        \end{minipage}
        \hspace{1cm}
        \begin{minipage}[c]{.3\textwidth}
            $A_{x}=\begin{bmatrix}
                0 & 1 & 0 & 0 & 1 \\
                1 & 0 & 1 & 0 & 0 \\
                0 & 1 & 0 & 1 & 0 \\
                0 & 0 & 1 & 0 & 1 \\
                1 & 0 & 0 & 1 & 0 \\
            \end{bmatrix}$
        \end{minipage}
    \end{center}
\end{example}

\begin{example}[Viewing the graph as a directed graph]
    Consider the monomial 
    \[
    x = \tau_0 \tau_3 \xi_1^{13} \in A^\vee(3).
    \]
    Rewriting $x$ in standard form yields
   $$\tau_0 \tau_3 \xi_1^{3^2}\xi_1^{3^1}\xi_1^{3^0}  \in A^\vee(3)$$
   with associated graph and adjacency matrix depicted below. 
   \begin{center}
        \begin{minipage}[c]{.55\textwidth}
            \begin{tikzpicture}[scale=.8]
                \begin{scope}[every node/.style={circle,thick,draw}]
                    \node (1) at (0,2) {$\phantom{m}1\phantom{m}$};
                    \node (20) at (2.5,0) {$2\cdot 3^0$};
                    \node (21) at (1.5,-2.5) {$2\cdot 3^1$};
                    \node (22) at (-1.5,-2.5) {$2\cdot 3^2$};
                    \node (23) at (-2.5,0) {$2\cdot 3^3$};
                \end{scope}
                \begin{scope}[>={Stealth[black]},
                              every node/.style={},
                              every edge/.style={draw, thick, ->}]
                    \path [above right] (1) edge node {$\tau_0$} (20);
                    \path [above left] (1) edge node {$\tau_3$} (23);
                    \path [right] (20) edge node {\color{black} $\xi_1^{3^0}$} (21);
                    \path [below] (21) edge node {\color{black} $\xi_1^{3^1}$} (22);
                    \path [left] (22) edge node {\color{black} $\xi_1^{3^2}$} (23);
                \end{scope}
            \end{tikzpicture}
        \end{minipage}
        \hspace{1cm}
        \begin{minipage}[c]{.3\textwidth}
            $A_{G_x}=\begin{bmatrix}
                0 & 1 & 0 & 0 & 1 \\
                0 & 0 & 1 & 0 & 0 \\
                0 & 0 & 0 & 1 & 0 \\
                0 & 0 & 0 & 0 & 1 \\
                0 & 0 & 0 & 0 & 0 \\
            \end{bmatrix}$
        \end{minipage}
    \end{center}
    In general, with the vertex set being ordered we can always consider the graphs to be directed graphs. The monomial information can be read from both the directed and undirected graphs. Notice that when the graph \textit{is} viewed as a directed graph, the adjacency matrix is upper triangular and no longer symmetric (for $l>k$, $a_{l,k}=0$). 
\end{example}

\subsection{Connectedness and Unilaterality} \label{sec:Connect&UnilatOdd}

We begin by considering the subgraph $G_x \backslash \{1\}$ given by omitting the vertex $1$. This subgraph has adjacency matrix 
\begin{equation} \label{eq:AdjMpOdd}
\begin{bmatrix}
    0 & c_{1,0} & c_{2,0} & & c_{n-1,0} & c_{n,0} \\
    c_{1,0} & 0 & c_{1,1} & & c_{n-2,1} & c_{n-1,1} \\
    c_{2,0} & c_{1,1} & 0 & & c_{n-3,2} & c_{n-2,2} \\
    &  &  & \ddots & \\
    c_{n-1,0} & c_{n-2,1} & c_{n-3,2} & & 0 & c_{1,n-1} \\
    c_{n,0} & c_{n-1,1} & c_{n-2,2} & & c_{1,n-1} & 0 
\end{bmatrix}.
\end{equation}
Noticing that this matrix has nearly the same form as the adjacency matrix in the $p = 2$ case (\ref{eq:AdjMp=2}), we can apply a similar argument to prove the following proposition. 
\begin{proposition}
    The subgraph $G_x \backslash \{1\}$ is connected if and only if the integers
    \[
    C(r,s) : = \sum_{t = 1}^n \sum_{T_t'} \prod_{k = 1}^t c_{|b_k-b_{k-1}|, \max (b_{k-1}, b_k)-1} 
    \]
    are positive for all integers $r$ and $s$ with $0 \leq r < s \leq n$. 
\end{proposition}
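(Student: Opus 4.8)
The plan is to reduce the statement to the prime $2$ connectedness criterion (Larson's Theorem, recalled above as \cite[Theorem 1.1]{Larson22}) by observing that the adjacency matrix (\ref{eq:AdjMpOdd}) of the subgraph $G_x \backslash \{1\}$ has the same shape as the adjacency matrix (\ref{eq:AdjMp=2}) in the $p=2$ setting, with the single difference that the entries $c_{i,j}$ are now allowed to take values in $\{0,1,\dots,p-1\}$ rather than just $\{0,1\}$. The key point is that connectedness of a graph depends only on which entries of the adjacency matrix are nonzero, not on their precise values, so multi-edges behave exactly like single edges for this purpose.

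First I would note that $G_x \backslash \{1\}$ has vertex set $\{2p^0, 2p^1, \dots, 2p^n\}$, which I reindex by exponents $\{0,1,\dots,n\}$ exactly as in the $p=2$ case, so that the relation (\ref{eq:acOddRel}) gives $a_{l,k} = c_{k-l,\,l-1}$ for $l<k$ (after the reindexing that drops the vertex $1$; concretely the $(l,k)$ entry of (\ref{eq:AdjMpOdd}) is $c_{|k-l|,\min(l,k)-1}$, matching (\ref{eq:acRel})). Then, following the discussion preceding Larson's theorem, the number of \emph{edges} traversed along a putative acyclic path corresponding to a tuple $(b_0,\dots,b_t) \in T_t'$ is $\prod_{k=1}^t a_{\max(b_{k-1},b_k),\,\min(b_{k-1},b_k)} = \prod_{k=1}^t c_{|b_k-b_{k-1}|,\,\max(b_{k-1},b_k)-1}$, and this product is a nonnegative integer which is positive precisely when every edge along the path is present. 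Hence $C(r,s)$, being a sum of such nonnegative products over all acyclic paths from $2p^r$ to $2p^s$, counts (with multiplicity, now that edges may be repeated $c_{i,j}$ times) the total number of edge-labeled acyclic paths between these two vertices; in particular $C(r,s) > 0$ if and only if there is \emph{at least one} acyclic path from $2p^r$ to $2p^s$ in $G_x\backslash\{1\}$.

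With this interpretation in hand, the equivalence is immediate: a graph is connected if and only if every pair of distinct vertices is joined by a path, and since any walk between two vertices contains an acyclic path between them, this is equivalent to every pair being joined by an acyclic path, i.e. to $C(r,s) > 0$ for all $0 \le r < s \le n$ (the case $r=s$ being vacuous and the cases $r>s$ following by symmetry of the underlying undirected graph). I would then remark that the outer sum need only run to $t = n$ rather than $t = n+1$ as in the $p=2$ statement, because $G_x \backslash \{1\}$ has only $n+1$ vertices and so the longest acyclic path has $n$ edges.

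The only mild obstacle is bookkeeping: one must check carefully that the reindexing of vertices by $\{0,\dots,n\}$ makes the identity $a_{l,k} = c_{|k-l|,\min(l,k)-1}$ hold on the nose (the shift by the removed vertex $1$, together with the shift between (\ref{eq:acRel}) and (\ref{eq:acOddRel}), must cancel correctly), so that the combinatorial formula for $C(r,s)$ literally matches the one obtained by transcribing Larson's argument. Once this is verified, the proof is essentially a citation of \cite[Theorem 1.1]{Larson22} with ``present an edge'' replaced by ``present with some positive multiplicity,'' which does not affect the connectedness conclusion.
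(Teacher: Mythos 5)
Your proposal is correct and takes essentially the same route as the paper, which itself offers no written proof beyond observing that the adjacency matrix of $G_x \backslash \{1\}$ has the same shape as in the $p=2$ case and asserting that Larson's argument transfers; your added observations --- that connectedness depends only on which matrix entries are nonzero, so multi-edges are harmless, and that $C(r,s)$ now counts acyclic paths with multiplicity --- are exactly the intended content. The one point you flag but do not actually close is the subscript bookkeeping, and it does not close as stated: your (correct) identity $a_{l,k}=c_{|k-l|,\min(l,k)-1}$ translates, in the exponent coordinates $b_i\in\{0,\dots,n\}$, to an edge multiplicity $c_{|b_k-b_{k-1}|,\,\min(b_{k-1},b_k)}$, whereas the displayed $C(r,s)$ carries the subscript $\max(b_{k-1},b_k)-1$; these agree only when $|b_k-b_{k-1}|=1$, and for longer edges the stated subscript can even fall outside the admissible range $0\le j\le n-i$. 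This discrepancy is inherited from the paper's own transcription of Larson's theorem (the relation $a_{l,k}=c_{k-l,l-1}$ for $l<k$ forces a $\min$, not a $\max$), so your argument in fact proves the evidently intended statement; but the claim that the shifts ``must cancel correctly'' should be replaced by an explicit correction of the index rather than left as a hope.
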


Returning to consider the entire graph $G_x$, we prove the following theorem. 
\begin{theorem} \label{thm:connectedOdd}
    The graph $G_x$ is connected if and only if 
    \begin{enumerate}
        \item the integers 
        \[
        C(r,s) : = \sum_{t = 1}^n \sum_{T_t'} \prod_{k = 1}^t c_{|b_k-b_{k-1}|, \max (b_{k-1}, b_k)-1} 
        \]
        are positive for all integers $r$ and $s$ with $0 \leq r < s \leq n$ and
        \item the sum 
        \[
        \sum_{k=0}^n \epsilon_k > 0.
        \]
    \end{enumerate}
\end{theorem}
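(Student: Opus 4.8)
The plan is to leverage the preceding proposition, which already characterizes when the subgraph $G_x\backslash\{1\}$ (on the vertices $2p^0,\dots,2p^n$) is connected, and then separately analyze how the vertex $1$ is attached. The key structural observation is that in the adjacency matrix (\ref{eq:AdjMatrixOdd}), the vertex $1$ is connected precisely to those vertices $2p^i$ for which $\epsilon_i=1$, via the edge $\tau_i$; the entries in the first row are exactly $\epsilon_0,\epsilon_1,\dots,\epsilon_n$. So $G_x$ is connected if and only if (a) every pair of vertices among $2p^0,\dots,2p^n$ is joined by a path, and (b) the vertex $1$ is joined to at least one vertex $2p^i$, and that vertex lies in the (necessarily unique, under (a)) connected component containing all the $2p^i$.

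First I would prove the forward direction: assume $G_x$ is connected. Then $G_x\backslash\{1\}$ need not literally be connected as a standalone claim from $G_x$ alone, so I must argue more carefully. Since $G_x$ is connected and has at least two vertices, the vertex $1$ has degree at least $1$, i.e.\ $\sum_{k=0}^n\epsilon_k>0$, giving condition (2). For condition (1), take any two vertices $2p^r,2p^s$ and a path $P$ between them in $G_x$. If $P$ avoids the vertex $1$, it is already a path in $G_x\backslash\{1\}$. If $P$ passes through $1$, then $1$ is entered and exited along $\tau$-edges, say $\tau_a$ and $\tau_b$ with $a\neq b$; but this does not immediately give a path in the deleted subgraph. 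Here is the fix: I claim connectedness of $G_x$ forces $G_x\backslash\{1\}$ to be connected. Indeed, suppose $G_x\backslash\{1\}$ had two components $X,Y$ partitioning $\{2p^0,\dots,2p^n\}$. Every edge of $G_x$ between $X$ and $Y$ must pass through $1$, but there are no such edges (all non-$\tau$ edges have both ends among the $2p^i$, and we are told the only edges incident to $1$ are the $\tau_i$). So $1$ has neighbors only in, say, $X$ (up to relabeling it cannot bridge $X$ and $Y$ by a single edge since each $\tau_i$ has its non-$1$ end at a single $2p^i$)—wait, $1$ could have neighbors in both $X$ and $Y$. If so, then $X\cup\{1\}\cup Y$ is connected, contradicting that $X,Y$ were distinct components of $G_x\backslash\{1\}$ only if... actually this shows $G_x$ can be connected with $G_x\backslash\{1\}$ disconnected. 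So the correct statement of the forward direction is subtler, and I would instead argue directly: connectedness of $G_x$ means in particular that $2p^r$ and $2p^s$ are joined; replace any maximal subwalk through $1$, namely $2p^a\!-\!1\!-\!2p^b$, and observe we cannot in general shortcut it—so condition (1) as stated might fail. I expect the resolution is that the theorem is using $C(r,s)$ to count paths in $G_x\backslash\{1\}$ and the claim is that for $G_x$ connected these must all be positive. This requires ruling out the scenario above, which happens exactly when the $2p^i$'s split into components each separately reachable only through $1$. That is the main obstacle, and I would handle it with a degree/parity argument on the vertex $1$.

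The reverse direction is cleaner: assume (1) and (2). By the Proposition, (1) gives that $G_x\backslash\{1\}$ is connected, so all of $2p^0,\dots,2p^n$ lie in one component. By (2), some $\epsilon_i=1$, so $1$ is joined by $\tau_i$ to $2p^i$, which is in that component; hence $G_x$ is connected. So the reverse direction is essentially immediate from the Proposition plus the adjacency-matrix description (\ref{eq:acOddRel}).

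I expect \textbf{the main obstacle} to be the forward direction, specifically showing that connectedness of the full graph $G_x$ actually forces connectedness of $G_x\backslash\{1\}$ (equivalently, forces all $C(r,s)>0$). The key realization that makes this work: for $G_x$ connected to force $G_x\backslash\{1\}$ connected, one needs that $1$ cannot be a cut vertex separating the $2p^i$'s, which in turn should follow because $G_x$ connected and (2) together are being asserted as equivalent to (1)+(2)—so in fact the intended argument is probably that $G_x$ connected trivially implies (2), and then one shows $G_x$ connected plus (2) implies $G_x\backslash\{1\}$ connected by a more delicate argument, perhaps by induction on $n$ or by analyzing which $\epsilon_i$ are nonzero and using that the $\xi$-edges already connect a lot of the $2p^i$'s. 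I would structure the write-up as: (i) the trivial implication $G_x$ connected $\Rightarrow$ (2); (ii) the Proposition gives (1) $\Leftrightarrow$ $G_x\backslash\{1\}$ connected; (iii) $G_x\backslash\{1\}$ connected $+$ (2) $\Rightarrow$ $G_x$ connected (easy); (iv) $G_x$ connected $\Rightarrow$ $G_x\backslash\{1\}$ connected (the hard step), likely by showing any path through $1$ between two $2p^i$'s can be rerouted, using that the set of $i$ with $\epsilon_i=1$ is nonempty and that the $2p^i$'s reachable from each other through $1$-free paths must, by connectedness, already exhaust all vertices once we also know $G_x$ itself is connected.
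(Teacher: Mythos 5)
Your reverse direction is exactly the paper's argument: the paper's entire proof consists of the two sentences ``The first condition guarantees that the subgraph $G\backslash\{1\}$ is connected. The second condition guarantees that the vertex $1$ is connected by an edge to the subgraph $G\backslash\{1\}$,'' which establishes only sufficiency. So on the ``if'' direction you and the paper agree, and your step (iii) is fine.

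Your unease about the forward direction is not a defect of your write-up to be patched by ``a degree/parity argument on the vertex $1$'' --- it is a genuine counterexample waiting to be written down, and you should write it down rather than try to repair the implication. Take $x=\tau_0\tau_1\in A^\vee(1)$: the edges are $\tau_0\colon 1\to 2$ and $\tau_1\colon 1\to 2p$, so $G_x$ is the connected path $2$--$1$--$2p$, yet $C(0,1)=c_{1,0}=0$ because $r_1=0$. The paper's own ``tree graph'' example $x=\tau_0\tau_3\xi_2^4\in A^\vee(3)$ does the same job: $G_x$ is a (connected) tree, but deleting the vertex $1$ leaves the two components $\{2,2\cdot 3^2\}$ and $\{2\cdot 3^1,2\cdot 3^3\}$, so $C(0,1)=0$. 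In short, the vertex $1$ can be a cut vertex, $C(r,s)$ only counts paths avoiding it, and therefore ``$G_x$ connected $\Rightarrow$ (1)'' is false as stated. Your step (iv) cannot be completed, and no rerouting or induction will produce the missing paths because they simply do not exist in $G_x\backslash\{1\}$. The honest conclusion is that only the ``if'' half of the theorem is correct; an ``only if'' statement would need either to redefine $C(r,s)$ to count paths in all of $G_x$ (allowing two $\tau$-edges through the vertex $1$) or to add that hypothesis explicitly. Your proposal is therefore right where it is cautious and would go wrong only if you followed through on the hope that the forward implication can be salvaged as stated.
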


\begin{proof}
    The first condition guarantees that the subgraph $G \backslash \{ 1 \}$ is connected. The second condition guarantees that the vertex $1$ is connected by an edge to the subgraph $G \backslash \{ 1\}$.
\end{proof}

Similarly to the case where $p = 2$, the graphs $G_x$ can also naturally be viewed as directed graphs with edges oriented in the direction of the larger vertex. We will again denote the directed version of the graph $G_x$ by $G^\textup{dir}_x.$ Similar techniques of proof yield the following characterization of the unilaterality of $G_x$.

\begin{theorem} \label{thm:unilateralOdd}
    The directed graph $x^{\text{dir}}$ is unilateral if and only if 
    \begin{enumerate}
        \item the integers
        \[
        U(r,s) := \sum_{t=1}^{n}\sum_{{T}''_t}\prod_{k=1}^t c_{b_{k-1},b_k}
        \]
        are positive for all integers $r,s$ with $0 \leq r < s \leq n + 1$, where ${T}_t''$ is the set of all $(t + 1)$-tuples $(b_0, b_1, \cdots, b_t)$ with $b_i \in \{0, 1, \cdots, n + 1\}$ and $r = b_0 < b_1 < \cdots < b_t = s$,
        \item and furthermore $x$ is divisible by $\tau_0$.
    \end{enumerate}
\end{theorem}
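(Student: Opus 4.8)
The plan is to reduce unilaterality of $G_x^{\textup{dir}}$ to the existence of monotone directed paths, to recognize the integers $U(r,s)$ as counts of such paths, and then to isolate the role of $\tau_0$; this runs parallel to the proof of \cref{thm:connectedOdd}, but is cleaner because the orientation removes all ambiguity about the direction of travel. The first point to record is that, when $G_x$ is viewed as a directed graph, every edge points toward the larger of its two ends with respect to the order $1<2<2p<\cdots<2p^n$ on $V_x$ --- equivalently, the adjacency matrix of $G_x^{\textup{dir}}$ is upper triangular in the ordering used for \eqref{eq:AdjMatrixOdd}. Hence every directed walk visits vertices in strictly increasing order; in particular it never repeats a vertex, and there is never a directed path from a larger vertex to a smaller one. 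Therefore $G_x^{\textup{dir}}$ is unilateral precisely when, for every pair of vertices $u<v$, there is a directed path from $u$ to $v$.

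Next I would show that $U(r,s)$ counts these paths. Relabel the vertices as $v_0=1$ and $v_k=2p^{k-1}$ for $1\le k\le n+1$; by \eqref{eq:acOddRel} the number of edges from $v_a$ to $v_b$ (for $a<b$) is $\epsilon_{b-1}$ when $a=0$ and $c_{b-a,a-1}$ when $a\ge1$. By the previous paragraph a directed path from $v_r$ to $v_s$ is exactly a strictly increasing tuple $r=b_0<b_1<\cdots<b_t=s$ together with a choice of one of the parallel edges $v_{b_{k-1}}\to v_{b_k}$ at each step, so the number of such paths (counted with edge multiplicities) equals $\sum_t\sum_{\tilde T_t'}\prod_{k=1}^t c_{b_{k-1},b_k}=U(r,s)$ --- the same counting that underlies \cref{thm:connectedOdd}, now with the tuples forced to be increasing rather than merely repetition-free, and with no inclusion-exclusion required since a monotone walk is automatically a path. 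As all the coefficients appearing are nonnegative, $U(r,s)>0$ if and only if some monotone tuple from $v_r$ to $v_s$ has every step realizable, that is, if and only if there is a directed path $v_r\to v_s$.

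Combining these observations, $G_x^{\textup{dir}}$ is unilateral if and only if $U(r,s)>0$ for every pair with $0\le r<s\le n+1$, and it remains to match this with the two stated conditions. For the pair $(v_0,v_1)$ no vertex lies strictly between the two, so a directed path $v_0\to v_1$ must be the single edge $\tau_0$; thus $U(0,1)=\epsilon_0$, and positivity here is exactly condition (2), namely $\tau_0\mid x$. Conversely, once $\tau_0\mid x$ and $U(r,s)>0$ for all $1\le r<s\le n+1$, one gets a directed path $v_0\to v_s$ for every $s\ge2$ by prepending the edge $\tau_0\colon v_0\to v_1$ to a directed path $v_1\to v_s$; hence conditions (1) and (2) together are equivalent to positivity of $U(r,s)$ on the full vertex range, and therefore to unilaterality. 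The step I expect to be the main obstacle is the identification $U(r,s)=\#\{\text{directed paths }v_r\to v_s\}$: it requires carefully reconciling the three indexings in play --- the positions $b_i\in\{0,\dots,n+1\}$, the vertices $2p^{b_i-1}$, and the standard-form coefficients $c_{i,j}$ of \cref{def:StandardFormOdd} --- so that the displayed sum literally enumerates directed paths; once the monotonicity observation is in hand, the rest is short.
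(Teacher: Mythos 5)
Your argument is correct, and it is essentially the one the paper intends: the paper gives no proof of \cref{thm:unilateralOdd} at all, deferring to ``similar techniques of proof'' as for \cref{thm:connectedOdd}, and your writeup (upper-triangular adjacency matrix $\Rightarrow$ every directed walk is strictly increasing $\Rightarrow$ unilaterality reduces to upward reachability, with $U(r,s)$ the multiplicity-weighted count of increasing paths and the vertex $1$ handled separately via $\tau_0$) is exactly the intended filling-in. The one caveat is not with your reasoning but with the theorem statement you are reconciling it to: under your (natural) reading of $c_{b_{k-1},b_k}$ as the edge multiplicity from $v_{b_{k-1}}$ to $v_{b_k}$, one gets $U(0,1)=\epsilon_0$, so condition (2) becomes a special case of condition (1) rather than an independent hypothesis; and the outer sum should run to $t=n+1$ (the longest increasing path on $n+2$ vertices) if the indices genuinely range over $0\le r<s\le n+1$, as it does in Larson's $p=2$ statement. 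These are defects of the statement as printed, and your proof correctly supplies the substance either way.
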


\subsection{Trees and Hamiltonian cycles} \label{sec:Trees&HamiltonOdd}

\begin{theorem} \label{thm:treesOdd}
    Suppose $x$ is a monic monomial in $A^\vee(n)$ such that $G_x$ is connected. Then $G_x$ is a tree if and only if
   \begin{enumerate}
       \item the sum \[
            \sum_{i=1}^{n} \alpha_p(r_i) = n,
            \]
            where \( \alpha_p(r_i) \) denotes the number of nonzero digits in the base-\(p\) expansion of \( r_i \), and
            \item exactly one \( \tau_k \) divides the monic monomial \( x \), for some \( 0 \leq k \leq n \).
   \end{enumerate}
\end{theorem}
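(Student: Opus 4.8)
The plan is to turn the tree condition into a count of edges. The graph $G_x$ has $n+2$ vertices, so --- since $G_x$ is assumed connected --- it is a tree if and only if it is acyclic, equivalently if and only if it has exactly $n+1$ edges (counted with multiplicity). I would first read this count straight off the construction of $G_x$:
\[
|E_x| \;=\; \sum_{i=0}^{n}\epsilon_i \;+\; \sum_{i=1}^{n}\sum_{j=0}^{n-i}c_{i,j},
\]
the first sum counting the $\tau$-edges at the vertex $1$ and the second counting the $\xi$-edges among the vertices $\{2p^0,\dots,2p^n\}$.

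The next step is to eliminate multi-edges. No two distinct generators of $E_x$ join the same unordered pair of vertices --- the edges $\tau_i$ all meet $1$ and have distinct other endpoints $2p^i$, while $\xi_i^{p^j}$ joins $2p^j$ to $2p^{i+j}$ and this pair of vertices recovers $(i,j)$ --- so a pair of vertices is joined by two edges only when some $c_{i,j}\ge 2$, which would create a $2$-cycle. Hence if $G_x$ is a tree then every $c_{i,j}\in\{0,1\}$, and the $\xi$-contribution above equals $\sum_{i=1}^{n}\alpha_p(r_i)$.

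Finally, I would separate the $\tau$- and $\xi$-contributions. Since $G_x$ is connected, \cref{thm:connectedOdd} (via the preceding proposition) shows the induced subgraph $G_x\setminus\{1\}$ is connected; it has $n+1$ vertices and, being an induced subgraph of the tree $G_x$, is a forest, hence a tree, hence has exactly $n$ edges --- which is condition (1), $\sum_{i=1}^{n}\alpha_p(r_i)=n$. Substituting this into $|E_x|=n+1$ then forces $\sum_{i=0}^{n}\epsilon_i=1$, which is condition (2). For the converse, conditions (1) and (2) pin $|E_x|$ to $n+1$, and a connected graph on $n+2$ vertices with $n+1$ edges is a tree. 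The step I expect to be the main obstacle is exactly this separation together with the control of multi-edges: one must be certain that ``$G_x$ connected'' really does force ``$G_x\setminus\{1\}$ connected'', since a priori a connected $G_x$ could be assembled from a disconnected $G_x\setminus\{1\}$ bridged through the vertex $1$. Leaning on the connectedness criterion \cref{thm:connectedOdd} rather than re-deriving it by hand is the cleanest way to handle this.
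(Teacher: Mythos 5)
Your approach is the same edge-counting argument the paper uses, and your write-up is in fact more complete: the paper's proof only sketches the ``if'' direction, whereas you also derive the ``only if'' direction, correctly reducing it to the fact that $G_x\setminus\{1\}$ is a connected subgraph of a tree on $n+1$ vertices (and the worry you raise about connectedness of $G_x\setminus\{1\}$ is resolved exactly as you suggest, by the proposition preceding \cref{thm:connectedOdd}). Your observation that distinct generators of $E_x$ join distinct vertex pairs, so that multi-edges arise only from digits $c_{i,j}\ge 2$, is also correct and is more care than the paper takes.

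However, that same observation exposes a genuine gap in your converse (one the paper's proof shares). You claim that conditions (1) and (2) ``pin $|E_x|$ to $n+1$,'' but they do not: $\alpha_p(r_i)$ counts the \emph{nonzero} base-$p$ digits of $r_i$, while the number of $\xi$-edges is the digit \emph{sum} $\sum_j c_{i,j}$, which is strictly larger whenever some digit is $\ge 2$. Concretely, for $p=3$, $n=1$, $x=\tau_0\xi_1^2$ one has $\alpha_3(2)=1=n$ and exactly one $\tau_k$ dividing $x$, and $G_x$ is connected, yet $G_x$ carries two parallel edges from $2$ to $2p$ and hence (by the paper's own definition of a cycle as a closed path) is not a tree. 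So the ``if'' direction fails as literally stated at odd primes; it becomes correct if one either adds the hypothesis that every $c_{i,j}\le 1$ (equivalently, replaces $\alpha_p(r_i)$ by the digit sum of $r_i$), or interprets ``tree'' for the reduced graph $G_x^{\textup{R}}$. Your ``only if'' direction is unaffected, since there you correctly deduce $c_{i,j}\in\{0,1\}$ from acyclicity before identifying the edge count with $\sum_i\alpha_p(r_i)$.
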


\begin{proof}
Let $x$ be a monic monomial in $A^\vee(n)$ such that $G_x$ is connected. Recall that in our graph construction, a digit equal to $1$ in the base p expansion of $r_i$ corresponds to having exactly one edge between two vertices of the graph. Thus, \[
\sum_{i=1}^{n} \alpha_p(r_i) = n
\] implies that there are exactly n edges contained in the subgraph $G_x \backslash \{1 \}$. If additionally, a singular \( \tau_k \) divides $x$ with \( 0 \leq k \leq n \), $x$ is connected but not cyclic because $G_x$ consists of $n+1$ distinct edges in a graph with $n+2$ vertices. Thus the graph $G_x$ is a tree. 
\end{proof}

Recall that a Hamiltonian directed path for a graph G is defined as a directed path containing every vertex of G. 

\begin{theorem}
Let $x$ be a monic monomial in $A^\vee (n)$. The graph $G_x$ has a Hamiltonian-directed path if and only if $\tau_0 \xi_1^{\frac{p^{n}-1}{p-1}}$ divides $x$.
\end{theorem}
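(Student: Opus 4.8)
The plan is to characterize when $G_x$ has a Hamiltonian directed path by analyzing the structure of directed paths in $G_x^{\textup{dir}}$. Recall that the vertices are ordered $1 < 2p^0 < 2p^1 < \cdots < 2p^n$ and all edges point toward the larger vertex, so $G_x^{\textup{dir}}$ is acyclic and every directed path visits vertices in strictly increasing order. Consequently a directed path is Hamiltonian if and only if it passes through \emph{all} $n+2$ vertices in their natural order, i.e. it uses an edge between each consecutive pair of vertices in the list $1, 2p^0, 2p^1, \ldots, 2p^n$. This immediately reduces the problem to: the graph must contain the edge $1 \to 2p^0$ and, for each $0 \le j \le n-1$, an edge $2p^j \to 2p^{j+1}$.

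First I would translate these edge-existence conditions into divisibility conditions on $x$. The edge $1 \to 2p^0$ exists precisely when $\epsilon_0 = 1$, i.e.\ when $\tau_0 \mid x$, by the edge set description and the adjacency matrix entry $a_{1,2} = \epsilon_0$. For the consecutive edges among the $2p^j$, observe from the adjacency matrix (\ref{eq:AdjMatrixOdd}) or directly from the edge set that an edge between $2p^j$ and $2p^{j+1}$ (a ``length one'' jump, so coming from a $\xi_1$ factor) exists iff $c_{1,j} = 1$, where $c_{1,j}$ is the coefficient of $p^j$ in the $p$-adic expansion of $r_1$. Thus all $n$ consecutive edges $2p^0 \to 2p^1 \to \cdots \to 2p^n$ are present iff $c_{1,j} = 1$ for every $j = 0, 1, \ldots, n-1$, which is exactly the statement that $r_1 = (1\,1\,\cdots\,1)_p = \tfrac{p^n - 1}{p-1}$, i.e.\ $\xi_1^{(p^n-1)/(p-1)} \mid x$. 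Combining, $G_x$ has a Hamiltonian directed path if and only if $\tau_0 \, \xi_1^{(p^n-1)/(p-1)} \mid x$.

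For the ``if'' direction this is immediate: if $\tau_0 \, \xi_1^{(p^n-1)/(p-1)} \mid x$ then the required $n+1$ edges are all present and concatenating them gives a directed Hamiltonian path $1 \to 2p^0 \to 2p^1 \to \cdots \to 2p^n$. For the ``only if'' direction, the key point—which I expect to be the main (if modest) obstacle—is the claim that \emph{any} Hamiltonian directed path must be exactly this path, so that it genuinely forces all $n+1$ specific edges. This follows because in the acyclic directed graph $G_x^{\textup{dir}}$ every directed path lists its vertices in strictly increasing order; a Hamiltonian path has $n+2$ vertices and must therefore enumerate $1, 2p^0, \ldots, 2p^n$ in order, and since a path uses exactly one edge between each consecutive pair of its listed vertices, it must use an edge $1 \to 2p^0$ and an edge $2p^j \to 2p^{j+1}$ for each $j$. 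Each such edge between vertices at ``distance one'' in the vertex ordering can only come from $\tau_0$ (for $1 \to 2p^0$) or from a $\xi_1 = \xi_1^{p^j}$ factor (for $2p^j \to 2p^{j+1}$), since $|\xi_i^{p^j}|$ equals the gap between its endpoints and only $\xi_1$ (and $\tau_0$ at the bottom) spans a single step; hence $\epsilon_0 = 1$ and $c_{1,j} = 1$ for all $0 \le j \le n-1$, giving the asserted divisibility. One minor bookkeeping check to include: confirm that no multi-edge subtlety arises—one copy of each edge suffices, and the $\xi_1^{p^j}$ contributing the $j$-th step is distinct for distinct $j$, so all needed edges coexist in $G_x$ simultaneously.
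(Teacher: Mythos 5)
Your proof is correct and takes essentially the same approach as the paper: since every directed edge points toward the larger vertex, a Hamiltonian directed path is forced to traverse $1, 2p^0, 2p^1, \ldots, 2p^n$ in order, so it exists precisely when the $n+1$ consecutive edges are all present, which you translate into the divisibility condition exactly as the paper does. One caveat you share with the paper's own argument: at an odd prime the edge $2p^j \to 2p^{j+1}$ exists iff $c_{1,j} \geq 1$ (not $c_{1,j} = 1$), and divisibility of $x$ by $\xi_1^{(p^n-1)/(p-1)}$ only asserts $r_1 \geq \tfrac{p^n-1}{p-1}$ rather than that every base-$p$ digit of $r_1$ is nonzero (e.g.\ $p=3$, $n=2$, $r_1 = 6 = (20)_3$ satisfies the divisibility but omits the edge $2p^0 \to 2p^1$), so the ``if'' direction really requires the digitwise condition rather than plain divisibility.
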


\begin{proof} Suppose the graph $G^\textup{dir}_x$ associated with $x$ $\in$ $A^\vee(n)$ has a Hamiltonian directed path. Since the tail of each edge in $G_x^\textup{dir}$ is less than the head, and every vertex appears in the Hamiltonian directed path, the path must proceed through the ordered vertex set $V_x = \{1, 2, 2p, 2p^2, \cdots, 2p^n\}$ in order. In order for there to be an edge between the vertices $1$ and $2$, $\tau_0$ must divide $x$. In order for there to be an edge between the vertices $2p^{k}$ and $2p^{k + 1}$, $\xi_1^{{p^k}}$ must divide $x$. Hence 
\[
\tau_0 \xi_1^{\sum_{k = 0}^n p^k} = \tau_0 \xi_1^{\frac{p^{n}-1}{p-1}}
\]
must divide $x$. On the other hand, if $\tau_0 \xi_1^{\frac{p^{n}-1}{p-1}}$ divides $x$, the graph $G_x^\textup{dir}$ has a Hamiltonian-directed path by construction.  
\end{proof}

Next we turn to establishing a criteria for when the graph $G_x$ has a Hamilton cycle. Our goal will be to make use of Dirac's theorem, which says that a simple graph with $n$ vertices (${\displaystyle n\geq 3}$) is Hamiltonian if every vertex has degree ${\displaystyle {\tfrac {n}{2}}}$ or greater. Since we are working with multigraphs $G_x$, our first step will be to make an observation which allows us to reduce to the simple graph setting. 

Let $A_x$ be the adjacency matrix associated to $G_x$ with the $(i,j)$ entry of $A_x$ denoted $a_{i,j}$.
Define
    \[
    f_{\out} (2p^j) = \#\{1 < i \leq n | a_{j + 2, j + 2 + i} \geq 1\}
    \]
    and 
    \[
    f_{\inn} (2p^j) = \#\{1 < i \leq j+1 | a_{j + 2 -i, j + 2} \geq 1\}
    \] 
and let $G_x^\textup{R}$ denote the simple graph obtained from $G_x$ by replacing any multi-edges with a single edge between the same vertices. We will call $G_x^\textup{R}$ the reduced graph. Then the following proposition follows from the definitions of $A_x$ and $G_x^\textup{R}$.

\begin{proposition}  \label{lem:oddprimeInOutDeg}
    The out-degree of the vertex $2p^j$ in $G_x^\textup{R}$ considered as a directed graph is given by $f_{\out}$. The in-degree of the vertex $2p^j$ in $G_x^\textup{R}$ considered as a directed graph is given by $f_{\inn} (2p^j)$.
\end{proposition}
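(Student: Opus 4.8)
The plan is to unwind the definitions. In the reduced graph $G_x^{\textup{R}}$, oriented so that each edge points toward the vertex of larger index, the out-degree of a vertex $v$ is simply the number of distinct vertices $w > v$ adjacent to $v$ in $G_x$, and its in-degree is the number of distinct $w < v$ adjacent to $v$. I would read both numbers directly off the adjacency matrix $A_x$, using its explicit entries as recorded in \eqref{eq:acOddRel} and displayed in \eqref{eq:AdjMatrixOdd}.

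Concretely, I would proceed in three steps. First, record that forming $G_x^{\textup{R}}$ from $G_x$ collapses every family of parallel edges to a single edge, so there is an edge between the vertices in positions $l$ and $k$ of $G_x^{\textup{R}}$ precisely when $a_{l,k} \ge 1$. Second, fix the bookkeeping: under the ordering $V_x = \{1, 2p^0, 2p^1, \dots, 2p^n\}$ the vertex $2p^j$ occupies position $j+2$ and the vertex $1$ occupies position $1$; since edges point toward the larger vertex and $1$ is the minimal vertex, the out-neighbors of $2p^j$ are exactly the $2p^{j'}$ with $j' > j$ (positions $j+3,\dots,n+2$) — the vertex $1$ is never the head of an out-edge from $2p^j$ — while the in-neighbors of $2p^j$ are the vertex $1$ together with the $2p^{j'}$ for $j' < j$ (positions $1,2,\dots,j+1$). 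Third, translate the counts: the out-degree of $2p^j$ in $G_x^{\textup{R}}$ is $\#\{\,i : a_{j+2,\,j+2+i} \ge 1\,\}$ over the relevant range of $i$ (entries with $j+2+i > n+2$ vanishing), and since $a_{j+2,\,j+2+i} = c_{i,j}$ by \eqref{eq:acOddRel}, this is exactly $f_{\out}(2p^j)$; likewise the in-degree of $2p^j$ is $\#\{\,k : a_{k,\,j+2} \ge 1\,\}$ over positions $k \le j+1$, and after the substitution $k = j+2-i$ — using $a_{1,\,j+2} = \epsilon_j$ to account for the $\tau_j$-edge coming from the vertex $1$ — this is exactly $f_{\inn}(2p^j)$.

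There is no real obstacle here; the proposition is bookkeeping, as its statement (``follows from the definitions'') already signals. The only points that need care are the index offset — the vertex $2p^j$ lives in row and column $j+2$ of $A_x$, not $j$ — the asymmetry introduced by the vertex $1$ and the $\tau_i$-edges when one separates the edges incident to $2p^j$ into out-edges and in-edges, and the fact that $f_{\out}$ and $f_{\inn}$ test only \emph{whether} the relevant matrix entry is positive rather than its actual multiplicity, which is precisely the information that survives the passage from $G_x$ to the reduced graph $G_x^{\textup{R}}$.
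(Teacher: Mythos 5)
Your proposal is correct and matches the paper's treatment: the paper offers no written proof, stating only that the proposition follows from the definitions of $A_x$ and $G_x^{\textup{R}}$, and your unwinding of the index bookkeeping (position $j+2$ for the vertex $2p^j$, the passage to the reduced graph turning multiplicities into indicator values, and the entry $a_{1,j+2}=\epsilon_j$ accounting for the $\tau_j$-edge) is precisely that verification. The only wrinkle is that the paper's displayed definitions of $f_{\out}$ and $f_{\inn}$ use the strict bound $1<i$, which read literally would omit the $i=1$ edge; you implicitly read this as $1\le i$, which is the intended meaning.
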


In order to guarantee a Hamiltonian cycle in $G_x$, it is sufficient to show there is a Hamiltonian cycle in $G_x^\textup{R}$. Since $f_{\inn}(2p^j) + f_{\out}(2p^j)$ gives the total degree of the vertices $2p^j$ ($0 \leq j \leq n$) in $G_x^\textup{R}$, it only remains to consider the degree of the vertex $1$ in order to apply Dirac's theorem. 

Since $1$ is the initial vertex in the directed graph $G_x^\textup{dir}$, the total degree of the vertex $1$ is \[
\sum_{k = 0}^n \epsilon_k.
\]
Then using Dirac's theorem we arrive at the following result. 

\begin{theorem} \label{thm:oddprimeHamilton}
    Suppose $x = \tau_0^{\epsilon_0} \cdots \tau_n^{\epsilon_n} \xi_1^{r_1} \cdots\ xi_n^{r_n} \in A^\vee(n)$. The corresponding graph $G_x$ has a Hamilton cycle if
    \begin{enumerate}
        \item $n>0$,
        \item for each $j$ with $0 \leq j \leq n$,
        \[
            f_{\inn}(2p^j) + f_{\out} (2p^j) \geq         \frac{n}{2}, 
        \]
        \item and \[\sum_{k=0}^n  \epsilon_k \geq \frac{n}{2}.\]
    \end{enumerate}  
\end{theorem}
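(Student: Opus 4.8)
The plan is to derive the Hamilton cycle from Dirac's theorem, applied not to the multigraph $G_x$ itself but to its reduced simple graph $G_x^{\textup{R}}$. Most of the ingredients have already been assembled in this subsection, so the argument is primarily a matter of combining them correctly.

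First I would observe that it suffices to exhibit a Hamilton cycle in $G_x^{\textup{R}}$: the graphs $G_x$ and $G_x^{\textup{R}}$ have the same vertex set, and every edge of $G_x^{\textup{R}}$ is the image of at least one edge of $G_x$, so any Hamilton cycle of $G_x^{\textup{R}}$ lifts, edge by edge, to a Hamilton cycle of $G_x$. Condition (1), $n>0$, guarantees that $G_x^{\textup{R}}$ has $n+2\ge 3$ vertices, so Dirac's theorem is applicable. The main step is then to verify the minimum-degree hypothesis at each vertex. For a vertex of the form $2p^j$ with $0\le j\le n$, \cref{lem:oddprimeInOutDeg} identifies its degree in $G_x^{\textup{R}}$ with $f_{\inn}(2p^j)+f_{\out}(2p^j)$, which condition (2) bounds below; for the remaining vertex $1$, which is the unique source of the directed graph $G_x^{\textup{dir}}$, the neighbors are exactly the vertices $2p^k$ with $\epsilon_k=1$, so its degree equals $\sum_{k=0}^n\epsilon_k$, which condition (3) bounds below. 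Feeding these bounds into Dirac's theorem yields a Hamilton cycle in $G_x^{\textup{R}}$, and hence in $G_x$.

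The step I expect to require the most care is matching the numerical thresholds appearing in conditions (2) and (3) against exactly what Dirac's theorem demands of a graph on $n+2$ vertices, i.e.\ confirming that the stated hypotheses really do force every vertex of $G_x^{\textup{R}}$ to have degree at least half the number of vertices (and, if they fall just short, either sharpening the count of the degrees or falling back on a closure-type strengthening of Dirac that exploits how many vertices $2p^k$ are adjacent to the vertex $1$). The remaining points — that collapsing parallel edges to form $G_x^{\textup{R}}$ neither identifies two vertices nor deletes a needed edge, and that a Hamilton cycle of $G_x^{\textup{R}}$ genuinely pulls back to one of $G_x$ — are immediate from the definitions and need only be stated.
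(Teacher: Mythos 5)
Your proposal follows essentially the same route as the paper's proof: reduce to the simple graph $G_x^{\textup{R}}$ (noting a Hamilton cycle there lifts to $G_x$), read off the degree of each $2p^j$ as $f_{\inn}(2p^j)+f_{\out}(2p^j)$ and the degree of the vertex $1$ as $\sum_{k=0}^n\epsilon_k$, and apply Dirac's theorem. The numerical worry you flag is legitimate — Dirac on $n+2$ vertices demands minimum degree $(n+2)/2$ rather than $n/2$ — but the paper's own proof applies Dirac with the same $\tfrac{n}{2}$ threshold without further comment, so your argument is no weaker than the one given there.
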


\subsection{Graph theoretic interpretation of the coproduct} \label{sec:coprodOdd}

Similarly to Larson's graph theoretic description of the coproduct at the prime $p =2$ (\cref{thm:LarsonCoproduct}), we also have a graphical interpretation of the coproduct on elements $\xi_i^{p^j}$ and $\tau_i^{\epsilon_i} \in A^\vee (n)$ at odd primes. 
\begin{theorem} \label{thm:coproductOdd}
    The coproduct $\psi(\xi_i^{p^j}) \in A^\vee(n)$ is the sum of tensors of all pairs of edges that make length $2$ directed paths from the vertex $2 p^j$ to the vertex $2 p^{i +j}$ in the complete graph on the vertices $\{1, 2, 2p, 2p^2, \cdots, 2p^n \}$ considered as a directed graph.
     
    Similarly, coproduct $\psi(\tau_i) \in A^\vee(n)$ is the sum of tensors of all pairs of edges that make length $2$ directed paths from the vertex $1$ to the vertex $2 p^i$.
\end{theorem}

\begin{proof}
Our proof closely follows that of \cite[Theorem 1.5]{Larson22} in the $p = 2$ case. We first prove the statement for $x = \xi_i^{p^j}$. Recall that the coproduct on the dual Steenrod algebra $A^\vee$ is given by
\begin{equation} \label{eq:coprod}
\psi(\xi_n) = \xi_n \otimes 1 + 1 \otimes \xi_n + \sum_{k = 1}^{n - 1} \xi_{n - k}^{p^k}.
\end{equation}
The first two summands on the right-hand side of (\ref{eq:coprod}) represent degenerate length $2$ directed paths from $2 p^j$ to $2 p^{i + j}$. A non-degenerate length $2$ directed path from $2 p^j$ to $2 p^{i + j}$ corresponds to a choice of an intermediate vertex, which is of the form $2 p^{j + k}$ for some $k$ where $1 \leq k \leq i - 1$. Given this choice of $k$, the edge from $2p^j$ to $2p^{j + k}$ is $\xi_{j + k - j}^{p^j} = \xi_k^{p^j}$ and the edge from $2p^{j + k}$ to $2 p^{i + j}$ is $\xi_{i + j - ( j + k)}^{p^{i + k}}$. The terms of the sum indexed by $k$ on the far-right side of (\ref{eq:coprod}) correspond precisely to the pairs of edges just described. 
Similarly, 
\[
\psi(\tau_n) = \tau_n \otimes 1 + 1 \otimes \tau_n + \sum_{k = 0}^{n - 1} \xi_{n - k}^{p^k} \otimes \tau_k
\]
and a similar argument gives the desired statement.
\end{proof} 

\begin{example}
Suppose $p = 3$ and consider $\xi_2 \in A^\vee(3)$. Then  
\[
\psi(\xi_2) = \textcolor{darkblue'}{\xi_2 \otimes 1} + \textcolor{orange'}{\xi_1^3 \otimes \xi_1} + \textcolor{green'}{1 \otimes \xi_2}
\]
as depicted in \cref{fig:xi2Coprod}. Note that the loops in \cref{fig:xi2Coprod} denote degenerate paths corresponding to the factors of $1$ appearing in the tensor product.  The dashed arrows are directed edges between vertices that do not form directed paths of length $2$ between the vertices $2\cdot3^0$ and $2 \cdot 3^2$ in the complete graph on the vertices $\{1,\, 2 \cdot 3^0,\, 2 \cdot 3, \, 2 \cdot 3^2,\, 2 \cdot 3^3 \}$ considered as a directed graph. The solid arrows are edges that do form directed paths of length $2$ between the vertices $2\cdot3^0$ and $2 \cdot 3^2$.\color{black}

\begin{figure}[b]
\centering
\begin{tikzpicture}[->, thick, scale=1.5,
    every node/.style={circle, draw, minimum size=0.8cm},
    blueedge/.style={->, thick, blue},
    greenedge/.style={->, thick, green!80!black},
    rededge/.style={->, thick, red},
    dottededge/.style={->, dotted, thick}
    ]

\node (1) at (90:2) {$\phantom{m}1 \phantom{m}$};
\node (2) at (18:2) {$2 \cdot 3^0$};
\node (6) at (306:2) {$2 \cdot 3^1$};
\node (18) at (234:2) {$2 \cdot 3^2$};
\node (54) at (162:2) {$2 \cdot 3^3$};

\draw[green', bend right=15] (2) to (18);
\draw[orange'] (2) to (6);
\draw[orange'] (6) to (18);
\draw[color=darkblue', bend left=15] (2) to (18);

\path[green'] (2) edge[loop right] ();
\path[darkblue'] (18) edge[loop left] ();

\draw[dottededge] (1) -- (54);
\draw[dottededge] (2) -- (54);
\draw[dottededge] (6) -- (54);
\draw[dottededge] (18) -- (54);
\draw[dottededge] (1) -- (2);
\draw[dottededge] (6) -- (54);
\draw[dottededge] (1) -- (6);
\draw[dottededge] (1) -- (18);

\end{tikzpicture}
\caption{Coproduct construction of $\xi_2$ in $A^\vee_3(3)$}
\label{fig:xi2Coprod}
\end{figure}

\end{example}
\begin{example}
Suppose $p = 3$ and consider $\tau_2 \in A^\vee(3)$. Then 
\[
\psi(\tau_2) = \textcolor{darkblue'}{\tau_2 \otimes 1} + \textcolor{orange'}{\xi_2 \otimes \tau_0} + \textcolor{green'}{\xi_1^3 \otimes \tau_1} + \textcolor{lightblue'}{1 \otimes \tau_2}
\] 
as depicted in \cref{fig:tau2Coprod}. Again loops denote degenerate paths corresponding to factors of $1$ appearing in the tensor product.

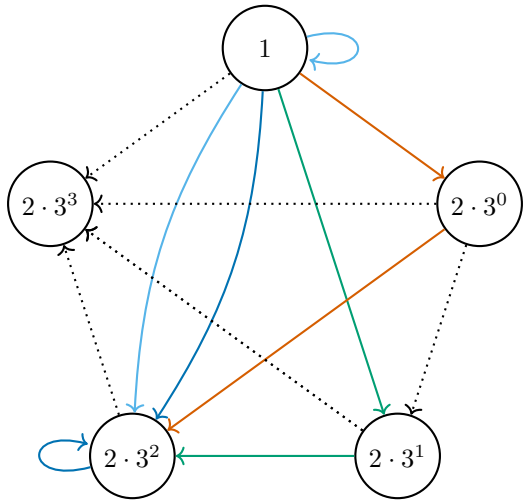
\begin{figure}[b]
\centering
\begin{tikzpicture}[->, thick, scale=1.5,
    every node/.style={circle, draw, minimum size=0.8cm},
    blueedge/.style={->, thick, blue},
    greenedge/.style={->, thick, green!80!black},
    rededge/.style={->, thick, red},
    cyanedge/.style={->, thick, cyan},
    dottededge/.style={->, dotted, thick}
    ]

\node (1) at (90:2) {$\phantom{m}1 \phantom{m}$};
\node (2) at (18:2) {$2 \cdot 3^0$};
\node (6) at (306:2) {$2 \cdot 3^1$};
\node (18) at (234:2) {$2 \cdot 3^2$};
\node (54) at (162:2) {$2 \cdot 3^3$};

\draw[green'] (1) to (6);
\draw[green'] (6) to (18);
\draw[orange'] (1) to (2);
\draw[orange'] (2) to (18);
\draw[darkblue', bend left=15] (1) to (18);
\draw[lightblue', bend right=15] (1) to (18);

\path[darkblue'] (18) edge[loop left] ();
\path[lightblue'] (1) edge[loop right] ();

\draw[dottededge] (1) -- (54);
\draw[dottededge] (2) -- (54);
\draw[dottededge] (6) -- (54);
\draw[dottededge] (18) -- (54);
\draw[dottededge] (6) -- (54);
\draw[dottededge] (2) -- (6);

\end{tikzpicture}

\caption{Coproduct construction of $\tau_2$ in $A^\vee_3(3)$}
\label{fig:tau2Coprod}
\end{figure}
\end{example}

\subsection{Graph theoretic interpretation of the antipode} 
Similarly to Larson’s graph theoretic description of the antipode at the prime $p = 2$ (\cref{thm:LarsonCoproduct}), we also have a graphical interpretation of the antipode $c: A^\vee (n) \to A^\vee (n)$ on the elements $\xi_i^{p^j}$ and $\tau_i$ at odd primes. 

\begin{theorem} \label{thm:AntipodeOdd} 
    The antipode $c(\xi_i^{p^j}) \in A^\vee(n)$ is the signed sum of all directed paths from the vertex $2 p^j$ to the vertex $2 p^{i + j}$ in the complete graph on the vertices $\{1,2, 2p, 2p^2, \cdots, 2p^n\}$ considered as a directed graph. The positive terms in the sum correspond exactly to paths consisting of an even number of edges from the vertex $2$ to the vertex $2p^{i +j}$.

    Similarly, the antipode $c(\tau_i) \in A^\vee(n)$ is the signed sum of all directed paths from the vertex $1$ to the vertex $2 p^{i}$. The positive terms correspond exactly to paths consisting of an even number of edges from the vertex $1$ to the vertex $2 p^i$. \color{black}
\end{theorem} 

\begin{proof}
Recall that the antipode map $c: A^\vee (n) \to A^\vee (n)$ of the Hopf algebra structure sends
\begin{align*}
    c(\xi_n) & = - \xi_n - \sum_{k = 0}^{n - 1} \xi_{n - k}^{p^k} c (\xi_k),
\end{align*}
and
\begin{align*}
    c(\tau_n) & = - \tau_n - \sum_{k = 0}^{ n - 1} \xi_{n - k}^{p^k} c(\tau_k).
\end{align*}
We will prove the statement for $c(\xi_i^{p^j})$ by induction on $i$. The proof for $c(\tau_i)$ is similar. 

To begin the induction, consider $c(\xi_1^{p^j}) = c(\xi_1)^{p^j} = (-\xi_1)^{p^j} = - \xi_1^{p^j}$ since $p$ is an odd prime. In the  complete graph on the vertices $\{1,2, 2p , 2p^2, \cdots, 2p^n\}$, the monomial $\xi_1^{p^j}$ corresponds exactly to the a path from $2p^j$ to $2p^{j + 1}$.\color{black} This is a path of length one so we observe the desired statement is true when $i = 1$. 

Now suppose $c(\xi_\ell^{p^j})$ is the signed sum of all directed paths from $2{p^j}$ to $2 p^{\ell + j}$ for all $\ell \leq i$ where paths of an even length correspond to positive terms and paths of an odd length correspond to negative terms. We must show that 
\begin{equation} \label{eq:conj}
c(\xi_{i + 1}^{p^j}) = - \xi_{i + 1}^{p^j} - \sum_{k = 0}^{i} \xi_{i + 1 - k}^{p^{k + j}} c(\xi_k^{p^j})
\end{equation}
is the signed sum of all directed paths from $2p^j$ to $2p^{i + 1 + j}$ where directed paths of an even length correspond to positive terms and directed paths of an odd length correspond to negative terms.

Consider the term $- \xi_{i + 1 - k}^{p^{k + j}} c(\xi_k^{p^j})$ in \cref{eq:conj}. The $c(\xi_k^{p^j})$ factor in this term consists of a signed sum of all paths from $2 p^j$ to $2 p^{j + k}$ while the $\xi_{i + 1 - k}^{p^{k + j}}$ factor consists of the edge from $2 p^{j + k}$ to $2 p^{j + i +1}$. Thus $\xi_{i + 1 - k}^{p^{k + j}} c(\xi_k^{p^j})$ is a signed sum of all directed paths from $2 p^j$ to $2 p^{j + k}$ followed by the edge from $2 p^{j + k}$ to $2 p^{j + i +1}$ in the reduced graph $G_x^\textup{R}$. Observe that the length of each directed path in $-\xi_{i + 1 - k}^{p^{k + j}} c(\xi_k^{p^j})$ is one step longer than in $c(\xi_k^{p^j})$. Moreover, the sign in front of each term appearing in $-\xi_{i + 1 - k}^{p^{k + j}} c(\xi_k^{p^j})$ is the opposite of that appearing in $c(\xi_k^{p^j})$. Hence directed paths of even length still correspond to positive terms in the sum and directed paths of odd length still correspond to negative terms in the sum.
Thus summing over all $k$ where $0 \leq k \leq i$ gives the desired signed sum of all paths from $2 p^j$ to $2 p^{i + 1 + j}$ passing through at least one intermediate vertex. The term $-\xi_{i + 1}^{p^j}$ gives the remaining path from $2 p^j$ to $2 p^{i + 1 +j}$ passing through no intermediate vertices. Hence $c(\xi_{i + 1}^{p^j})$ is the signed sum of all paths from $2p^j$ to $2 p^{i + 1 + j}$ where directed paths of an even length correspond to positive terms and directed paths of an odd length correspond to negative terms.

\end{proof}

\begin{example}
Let $p$ be an odd prime and consider $\xi_2 \in A^\vee(3)$. Then
\[
c(\xi_2) = - \textcolor{orange'}{\xi_2} + \textcolor{darkblue'}{\xi_1^{p + 1}}
\] 
as depicted in \cref{fig:xi2conj}.

\begin{figure}[ht]
\centering
\begin{tikzpicture}[->, thick, scale=1.5,
    every node/.style={circle, draw, minimum size=0.8cm},
    blueedge/.style={->, thick, blue},
    greenedge/.style={->, thick, green!80!black},
    rededge/.style={->, thick, red},
    orangeedge/.style={->, thick, orange},
    dottededge/.style={->, dotted, thick}
    ]

\node (1) at (90:2) {$\phantom{m}1 \phantom{m}$};
\node (2) at (18:2) {$2 \cdot p^0$};
\node (6) at (306:2) {$2 \cdot p^1$};
\node (18) at (234:2) {$2 \cdot p^2$};
\node (54) at (162:2) {$2 \cdot p^3$};

\begin{scope}[>={Stealth[black]},
                              every node/.style={},
                              every edge/.style={draw, thick}]
                    \path [above left,orange'] (2) edge node {\color{orange'} $\xi_2$} (18);
                    \path [darkblue',right] (2) edge node {\color{darkblue'} $\xi_1$} (6);
                    \path [darkblue', below] (6) edge node {\color{darkblue'} $\xi_1^p$} (18);
\end{scope}

\draw[dottededge] (1) -- (54);
\draw[dottededge] (2) -- (54);
\draw[dottededge] (6) -- (54);
\draw[dottededge] (18) -- (54);
\draw[dottededge] (6) -- (54);
\draw[dottededge] (1) -- (2);
\draw[dottededge] (1) -- (18);
\draw[dottededge] (1) -- (6);

\end{tikzpicture}

\caption{Antipode of $\xi_2$ in $A^\vee_3(3)$}
\label{fig:xi2conj}
\end{figure}
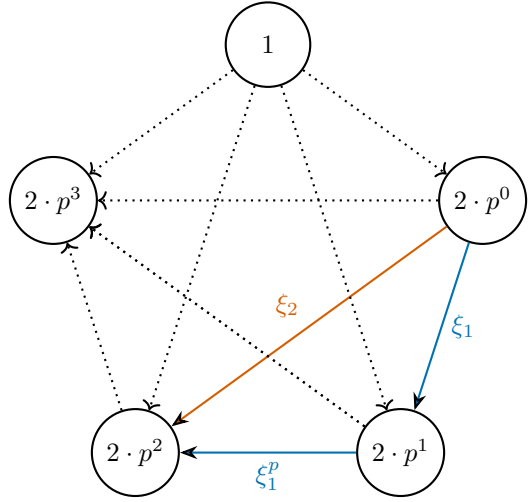
\end{example}

\begin{example}
Let $p$ be an odd prime and consider $\tau_2 \in A^\vee(3)$. Then 
\[
c(\tau_2) = - \textcolor{orange'}{\tau_2} + \textcolor{green'}{\tau_1 \xi_1^p} + \textcolor{pink'}{\tau_0\xi_2} - \textcolor{lightblue'}{\tau_0\xi_1^{p + 1}}
\] as depicted in \cref{fig:tau2Anti}.

\begin{figure}[ht]
\centering
\begin{tikzpicture}[->, thick, scale=1.5,
    every node/.style={circle, draw, minimum size=0.8cm},
    blueedge/.style={->, thick, pink'},
    greenedge/.style={->, thick, green'},
    rededge/.style={->, thick, orange'},
    cyanedge/.style={->, thick, lightblue'},
    dottededge/.style={->, dotted, thick}
    ]

\node (1) at (90:2) {$\phantom{m}1 \phantom{m}$};
\node (2) at (18:2) {$2 \cdot p^0$};
\node (6) at (306:2) {$2 \cdot p^1$};
\node (18) at (234:2) {$2 \cdot p^2$};
\node (54) at (162:2) {$2 \cdot p^3$};

\begin{scope}[>={Stealth[black]},
                              every node/.style={},
                              every edge/.style={draw, thick}]
    \path [above left,orange'] (1) edge node {\color{orange'} $\tau_2$} (18);
    \path [green',right] (1) edge node {\color{green'} $\tau_1$} (6);
    \path [green', below, bend left = 15] (6) edge node {\color{green'} $\xi_1^p$} (18);
    \path [lightblue', below left, bend right = 15] (1) edge node {\color{lightblue'} $\tau_0$} (2);
    \path [lightblue', right] (2) edge node {\color{lightblue'} $\xi_1$} (6);
    \path [lightblue', bend right = 15, above] (6) edge node {$\xi_1^p$} (18);
    \path [pink', bend left = 15, above right] (1) edge node {\color{pink'} $\tau_0$} (2);
    \path [pink', above left] (2) edge node {\color{pink'} $\xi_2$} (18);
\end{scope}

\draw[dottededge] (1) -- (54);
\draw[dottededge] (2) -- (54);
\draw[dottededge] (6) -- (54);
\draw[dottededge] (18) -- (54);
\draw[dottededge] (6) -- (54);

\end{tikzpicture}

\caption{Antipode of $\tau_2$ in $A^\vee(3)$}
\label{fig:tau2Anti}
\end{figure}
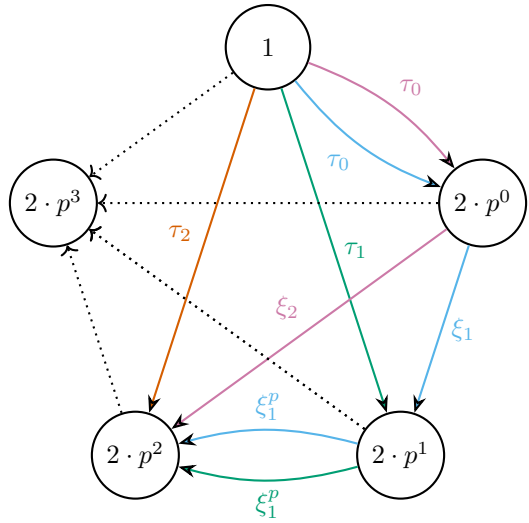
\end{example}

\begin{cor} \label{cor:unilateralityOdd}
    For $x \in A^\vee (n)$, the directed graph $G_x^{\textup{dir}}$ is unilateral if and only if
    \begin{enumerate}
        \item for each $\xi_i^{p^j} \in A^\vee (n)$, at least one summand of $c(\xi_i^{p^i})$ is a factor of $x$
        \item and $\tau_0$ is a factor of $x$.
    \end{enumerate}
\end{cor}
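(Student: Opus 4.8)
The plan is to first reduce unilaterality to a statement about the ``consecutive'' edges of $G_x^{\textup{dir}}$, and then translate that statement into the antipode language supplied by \cref{thm:AntipodeOdd}. The key structural observation is that every edge of $G_x^{\textup{dir}}$ is oriented from a vertex to one later in the ordering $1 < 2p^0 < 2p^1 < \cdots < 2p^n$, so every directed path is strictly increasing in this order; consequently $G_x^{\textup{dir}}$ is unilateral exactly when there is a directed path $u \to v$ whenever $u$ precedes $v$. For a pair of vertices that are consecutive in the order --- namely $1$ and $2p^0$, or $2p^j$ and $2p^{j+1}$ --- there are no intermediate vertices, so the only possible directed path joining them is the connecting edge itself; conversely, once all $n+1$ of these consecutive edges are present one obtains a directed path between any ordered pair by concatenation. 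Thus $G_x^{\textup{dir}}$ is unilateral if and only if $\tau_0$ and each $\xi_1^{p^j}$ with $0 \le j \le n-1$ is an edge of $G_x$, equivalently a factor of $x$; this criterion is precisely ``$\tau_0 \xi_1^{(p^n-1)/(p-1)}$ is a factor of $x$'', which ties the corollary to the Hamiltonian-directed-path statement proved above.

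Given this reduction the two implications are short. For ``if'', assume (1) and (2). Specializing (1) to $i=1$ and using that, since $p$ is odd, $c(\xi_1^{p^j}) = (-\xi_1)^{p^j} = -\xi_1^{p^j}$ is a single monomial, condition (1) forces $\xi_1^{p^j}$ to be a factor of $x$ for each $0 \le j \le n-1$; combined with $\tau_0 \mid x$ from (2), this is exactly the criterion above, so $G_x^{\textup{dir}}$ is unilateral. For ``only if'', assume unilaterality. The directed path from $1$ to $2p^0$ must be the edge $\tau_0$, which gives (2). For (1), fix $\xi_i^{p^j} \in A^\vee(n)$, so that $0 \le j \le n-i$ and $2p^j, 2p^{i+j}$ are both vertices of $G_x$; unilaterality provides a directed path from $2p^j$ to $2p^{i+j}$ in $G_x^{\textup{dir}}$, and by \cref{thm:AntipodeOdd} the monomial recording the edges of this path is one of the summands of $c(\xi_i^{p^j})$. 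Since every edge of the path lies in $G_x$, that summand is a factor of $x$, establishing (1).

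The only genuine subtlety is the meaning of ``is a factor of $x$'': it must be read graph-theoretically --- every edge occurring in the standard form of the monomial is an edge of $G_x$, equivalently the associated directed path of \cref{thm:AntipodeOdd} is a subgraph of $G_x^{\textup{dir}}$ --- rather than as divisibility in the ring $A^\vee(n)$, since the two notions genuinely differ (for instance $\xi_1$ divides $\xi_1^{p}$ in $A^\vee(n)$, yet $\xi_1$ is not a factor of $\xi_1^{p}$ in the graph sense, as the edge $2p^0 \to 2p^1$ does not appear in $G_{\xi_1^{p}}$). I would pin down this convention at the outset, as it is exactly the dictionary already used in \cref{thm:AntipodeOdd} and in the Hamiltonian-directed-path statement. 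With the convention fixed, every remaining step is a routine concatenation-of-paths or bijection-of-monomials argument, so I anticipate no serious obstacle beyond keeping the index bookkeeping straight.
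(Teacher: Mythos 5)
Your proof is correct and rests on the same two ingredients as the paper's: every directed path in $G_x^{\textup{dir}}$ is strictly increasing in the vertex order, and \cref{thm:AntipodeOdd} identifies the summands of $c(\xi_i^{p^j})$ with the directed paths from $2p^j$ to $2p^{i+j}$. Your intermediate reduction to the consecutive-edge criterion (equivalently, the Hamiltonian-directed-path divisibility condition $\tau_0\xi_1^{(p^n-1)/(p-1)} \mid x$) and your explicit warning that ``factor'' must be read graph-theoretically rather than as ring divisibility are useful elaborations of the paper's one-line argument, but not a genuinely different route.
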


\begin{proof}
    Note that the directed graph $G_x^\textup{dir}$ is unilateral if and only if there is a directed path connecting any two of its vertices, say $2 p^{i}$ and $2 p^{ i + j}$ or $1$ and $2 p^{i + j}$. \cref{thm:AntipodeOdd} shows this is equivalent to the demand that at least one summand of $c(\xi_i^{p^j})$ and $\tau_0$ appear as factors of $x$. 
\end{proof}

\section{A $C_2$-equivariant graphical construction} \label{sec:C2}

\subsection{The coefficients $\mM_2$} \label{sec:M2} Before describing the $C_2$-equivariant dual Steenrod algebra, we give a more detailed description of the bigraded coefficient ring 
\[
\mM_2 \cong \mF_2[a,u] \oplus \frac{\mF_2[a,u]}{(a^\infty, u^\infty)} \{ \theta \},
\]
where $|a|=(-1,-1)$, $|u|=(0,-1)$. A bigraded plot of $\mM_2$ appears in \cref{fig:M2} where each dot represents a copy of $\mF_2$. The topological dimension is given on the horizontal axis while the vertical axis is the weight. 

\begin{figure}[h!]
    \centering
    \resizebox{2.25in}{2in}{\begin{tikzpicture}
        \draw[<->,color=gray] (-5, -5) -- (5, -5) node[scale=0.7, right,yshift=-0.2in] {}; 
		\foreach \x/\xtext in {-4,-3,-2,-1,0,1,2,3,4}
		\draw[shift={(\x,-5)},color=gray] (0pt,2pt) -- (0pt,-2pt) node[scale=0.75,below,color=gray] {$\xtext$}; 
		
		\draw[<->,color=gray] (-6, -5) -- (-6, 5) node[scale=0.7, above left, color=gray] {}; 
		\foreach \y/\ytext in {-4,-3,-2,-1,0,1,2,3,4}
		\draw[shift={(-6,\y)}, color=gray] (2pt,0pt) -- (-2pt,0pt) node[scale=0.75,left, color=gray] {$\ytext$}; 
			
		\node at (-4,-4) {\textup{$\bullet$}};
		\node at (-3,-4) {\textup{$\bullet$}};
		\node at (-2,-4) {\textup{$\bullet$}};
		\node at (-1,-4) {\textup{$\bullet$}};
		\node at (0,-4) {\textup{$\bullet$}};
		
		\node at (-3,-3) {\textup{$\bullet$}};
		\node at (-2,-3) {\textup{$\bullet$}};
		\node at (-1,-3) {\textup{$\bullet$}};
		\node at (0,-3) {\textup{$\bullet$}};
		
		\node at (-2,-2) {\textup{$\bullet$}};
		\node at (-1,-2) {\textup{$\bullet$}};
		\node at (0,-2) {\textup{$\bullet$}};
		
		\node at (-1,-1) {\textup{$\bullet$}};
		\node[scale=1.5] at (-1.4,-0.9) {\textup{$a$}};
		\node at (0,-1) {\textup{$\bullet$}};
		\node[scale=1.5] at (0.4,-0.9) {\textup{$u$}};
		
		\node at (0,0) {\textup{$\bullet$}};
		\node at (0.4,0.15) {\textup{$1$}};
		
		\node at (0,2) {\textup{$\bullet$}};
		\node[scale=1.5] at (-0.3,2) {\textup{$\theta$}};
		
		\node at (0,3) {\textup{$\bullet$}};
		\node at (1,3) {\textup{$\bullet$}};
		
		\node at (0,4) {\textup{$\bullet$}};
		\node at (1,4) {\textup{$\bullet$}};
		\node at (2,4) {\textup{$\bullet$}};
		
		\node at (0,5) {\textup{$\bullet$}};
		\node at (1,5) {\textup{$\bullet$}};
		\node at (2,5) {\textup{$\bullet$}};
		\node at (3,5) {\textup{$\bullet$}};
    \end{tikzpicture}}
    \caption{The coefficients $\mM_2$}
    \label{fig:M2}
\end{figure}

\begin{remark} \rm
    The coefficients $\mM_2$ are the $RO(C_2)$-graded homology of a point with coefficients in the $C_2$-constant Mackey functor $\umF_2$. 
\end{remark}

\subsection{The $C_2$-equivariant dual Steenrod algebra}
{ Recall that the $C_2$-equivariant dual Steenrod algebra is given by 
$$
A_{C_2}^\vee =\bigslant{\mathbb{M}_2[\xi_1,\xi_2,...,\tau_0,\tau_1,...]}{(\tau_i^2=(u+a\tau_0)\xi_{i+1}+a\tau_{i+1})},
$$
where |$\xi_i|=\rho(2^i-1)$ and $|\tau_i|=2^i\rho-\sigma$ (see \cite[Theorem 6.41]{HuKriz01} or \cite[Theorem 2.14]{LiShiWAngXu19} for the full Hopf algebroid structure). Also recall that given an integer $n \geq 0$ the quotient $A_{C_2}^\vee  (n)$ is 
\[
A_{C_2}^\vee(n) \cong \bigslant{\mathbb{M}_2[\xi_1,\xi_2,..., \xi_n, \tau_0,\tau_1,..., \tau_n]}{(\xi_i^{2^{n-i+1}},\,\tau_i^2=(u+a\tau_0)\xi_{i+1}+a\tau_{i+1})}.
\]}

\subsection{A graphical interpretation} Given a monomial $x \in A_{C_2}^\vee (n)$ we can use the relation $\tau_i^2=(u+a\tau_0)\xi_{i+1}+a\tau_{i+1}$ to write $x$ as a finite sum consisting of terms of the form
\begin{equation} \label{eq:C2terms}
w \tau_0^{\epsilon_0} \tau_1^{\epsilon_1} + \cdots + \tau_n^{\epsilon_n} \xi_1^{r_1} \xi_2^{r_2} \cdots \xi_n^{{ r_n}}
\end{equation}
where $w \in \mM_2$, $\epsilon_i \in \{0,1\}$ and $0 \leq r_i \leq 2^{n + 1 - i} - 1$. Because addition is represented as disjoint union in our graphical interpretation, we will focus on monic monomials of the form (\ref{eq:C2terms}) throughout this section. Similarly to Wood's construction in the nonequivariant setting at the prime $2$, we will also utilize the $2$-adic expansion of $r_i$,
\[
r_i = \sum_{m  = 0}^{n - i} c_{i, n - i-m} \cdot 2^{n - i - m}
\]
and extend Definitions \ref{sform1} and \ref{def:StandardFormOdd} to make the following definition.
\begin{deff} \label{def:StandardFormEquivariant}
    Given a monic monomial $x =\tau_0^{\epsilon_0} \tau_1^{\epsilon_1} \cdots  \tau_n^{\epsilon_n} \xi_1^{r_1} \xi_2^{r_2} \cdots \xi_n^{{ r_n}}$ where $\epsilon_i \in \{0,1\}$ and $0 \leq r_i \leq 2^{n + 1 - i} - 1$, the {\bf \textit{standard form}} of $x$ is 
    \[
     \tau_0^{\epsilon_0} \tau_1^{\epsilon_1} \cdots \tau_n^{\epsilon_n} \left(\prod_{i = 1}^n \prod_{m = 0}^{n - i} \xi_i^{c_{i, n - i -m} \cdot 2^{n - i - m}} \right)
    \]
    where the exponents on the $\xi_i$ are { expanded into} their $2$-adic expansion. 
\end{deff}

Given such a monic monomial $x \in A^\vee (n)$, we define the graph $G_x = (V_x, E_x) $ by setting the ordered vertex set $V_x = \{\sigma, \rho, 2 \rho, 2^2 \rho, \cdots 2^n \rho\}$ and the edge set
\[
E_x = \{\tau_i: \sigma \to 2^i \rho \}_{0 \leq i \leq n} \bigcup \left\{ \xi_i^{c_{i,j} \cdot 2^j} : 2^j \rho \to 2^{ i + j} \rho \right\}_{\substack{1 \leq i \leq n \\ 0 \leq j \leq n - i}}
\]
where each $\tau_i$ denotes an edge connecting vertices $\sigma$ and $2^i \rho$, and each $\xi_i^{c_{i,j} \cdot 2^j}$ denotes $c_{i,j}$ edges connecting vertices $2^j \rho$ and $2^{i + j} \rho$. Notice that, similarly to nonequivariant setting, $| \xi_i^{2^j} |$, the degree of the edge $\xi_i^{2^j}$ { when viewed as an element of $A^\vee_{C_2}(n)$} corresponds to the absolute value of the difference of the ends of $\xi_i^{2^j}$ and the degree of the edge $\tau_i$ { when viewed as an element of $A^\vee_{C_2}(n)$}, $| \tau_i |$, also corresponds to the absoslute value of the difference of the ends of $\tau_i$.  

Since $\mM_2$ is a bigraded ring, and the coefficient $w \in \mM_2$ may live in a nonzero bidegree, multiplication by $w \in \mM_2$ may change the degree of $\tau_0^{\epsilon_0} \tau_1^{\epsilon_1} \cdots \tau_n^{\epsilon_n} \xi_1^{r_1} \xi_2^{r_2} \cdots \xi_n^{r_n}$. To account for this in our graphical interpretation, we view the vertices $V_x$ of $G_x$ as embedded in $\mR^2$ with the embedding given by the { bidegree} of the vertices plus the { bidegree} of $w$ as illustrated in the next two examples.

\begin{example}
The monomial $x= \xi_1^3\xi_2\tau_0$ in $A_{C_2}^\vee(2)$ has associated graph $G_x$ as depicted in \cref{fig:C2graph}. Here, and throughout this section, we use the motivic grading convention where the topological degree is plotted along the horizontal axis while the weight is given by the vertical axis. 
\begin{center}
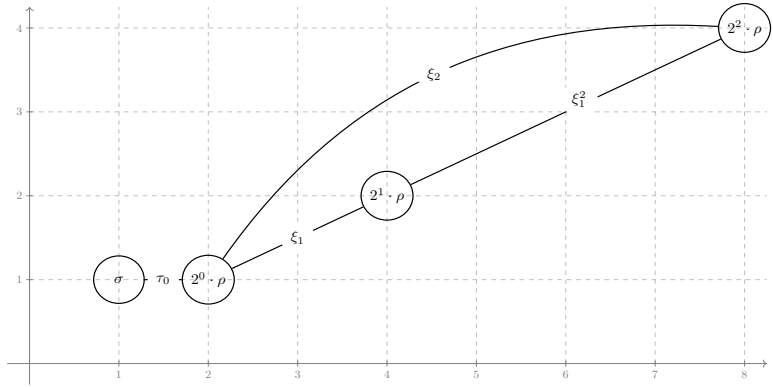
\begin{figure}[ht]
\resizebox{4in}{2in}{\begin{tikzpicture}
                \draw[step=2.0,gray!50!white,thin,dashed] (0,0) grid (16.5,8.5); 
                \draw[thin,gray,->] (0,-0.5) -- (0,8.5);  
                \draw[thin,gray,->] (-0.5,0) -- (16.5,0); 
            
                \foreach \x/\xtext in {1,2,3,4,5,6,7,8}
            		\draw[shift={(2*\x,0)},color=gray] (0pt,2pt) -- (0pt,-2pt) node[scale=0.75,below,color=gray] {$\x$}; 
            
                \foreach \y/\ytext in {1,2,3,4}
            		\draw[shift={(0,2*\y)}, color=gray] (2pt,0pt) -- (-2pt,0pt) node[scale=0.75,left, color=gray] {$\y$}; 
            
                \begin{scope}[every node/.style={circle,thick,draw}]
                    \node (11) at (2,2) {$\phantom{m}\sigma\phantom{m}$};
                    \node (20) at (4,2) {$2^0\cdot\rho$};
                    \node (21) at (8,4) {$2^1\cdot\rho$};
                    \node (22) at (16,8) {$2^2\cdot\rho$};
                \end{scope}
                \begin{scope}[>={Stealth[black]},
                            every edge/.style={draw,thick}]
                    \path (11) edge node [below] { $\tau_0$} (20);
                    \path [bend left] (20) edge node [above] { $\xi_2$} (22);
                    \path (20) edge node [below] { $\xi_1$} (21);
                    \path [above right] (21) edge node [below] { $\xi_1^2$} (22);
                \end{scope}
            \end{tikzpicture}}
    \caption{The graph $G_x$ associated to $x = \xi_1^3 \xi_2 \tau_0 \in A_{C_2}^\vee (2)$}
    \label{fig:C2graph}
\end{figure}
\end{center}
\end{example}

\begin{example}
    The monomial $x = a^2 u \xi_1^3 \xi_2 \tau_0 \in A_{C_2}^\vee(2)$ has associated graph $G_x$ as depicted in \cref{fig:C2graph2}. It may be helpful to recall that $| a | = (-1,-1)$ and $|u| = (0,-1)$.
    \begin{center}
    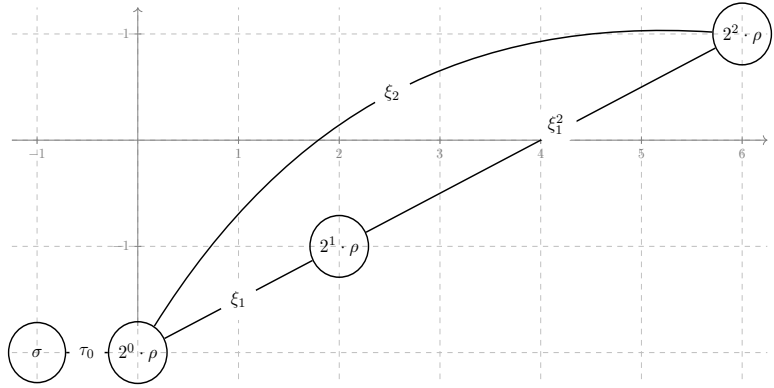
\begin{figure}
        \resizebox{4in}{2in}{\begin{tikzpicture}
                \draw[step=2.0,gray!50!white,thin,dashed] (-2.5,-4.5) grid (12.5,2.5); 
                \draw[thin,gray,->] (0,-3.5) -- (0,2.5);  
                \draw[thin,gray,->] (-2.5,0) -- (12.5,0); 
            
                \foreach \x/\xtext in {-1,1,2,3,4,5,6}
            		\draw[shift={(2*\x,0)},color=gray] (0pt,2pt) -- (0pt,-2pt) node[scale=0.75,below,color=gray] {$\x$}; 
            
                \foreach \y/\ytext in {-1,1}
            		\draw[shift={(0,2*\y)}, color=gray] (2pt,0pt) -- (-2pt,0pt) node[scale=0.75,left, color=gray] {$\y$}; 
            
                \begin{scope}[every node/.style={circle,thick,draw}]
                    \node (11) at (-2,-4) {$\phantom{m}\sigma\phantom{m}$};
                    \node (20) at (0,-4) {$2^0\cdot\rho$};
                    \node (21) at (4,-2) {$2^1\cdot\rho$};
                    \node (22) at (12,2) {$2^2\cdot\rho$};
                \end{scope}
                \begin{scope}[>={Stealth[black]},
                            every edge/.style={draw,thick}]
                    \path (11) edge node  [below] { $\tau_0$} (20);
                    \path [bend left] (20) edge node  [above] { $\xi_2$} (22);
                    \path (20) edge node  [below] { $\xi_1$} (21);
                    \path [above right] (21) edge node [below right] { $\xi_1^2$} (22);
                \end{scope}
            \end{tikzpicture}}
           
    \caption{The graph $G_x$ associated to $x = a^2 u \xi_1^3 \xi_2 \tau_0 \in A_{C_2}^\vee (2)$}
    \label{fig:C2graph2}
\end{figure}
\end{center}
\end{example}

Finally, the adjacency matrix $A_x$ associated to $G_x$ has exactly the same form as in the odd primary case (\ref{eq:AdjMatrixOdd}).
 
\subsection{Connectivity and Unilaterality} Similarly to the nonequivariant odd primes case (\textsection \ref{sec:Connect&UnilatOdd}), we begin by considering the subgraph $G_x \backslash \{ \sigma \}$. This subgraph has adjacency matrix 
\[
\begin{bmatrix}
    0 & c_{1,0} & c_{2,0} & & c_{n-1,0} & c_{n,0} \\
    c_{1,0} & 0 & c_{1,1} & & c_{n-2,1} & c_{n-1,1} \\
    c_{2,0} & c_{1,1} & 0 & & c_{n-3,2} & c_{n-2,2} \\
    &  &  & \ddots & \\
    c_{n-1,0} & c_{n-2,1} & c_{n-3,2} & & 0 & c_{1,n-1} \\
    c_{n,0} & c_{n-1,1} & c_{n-2,2} & & c_{1,n-1} & 0 
\end{bmatrix}.
\]
{ Since} this matrix has the same form as the adjacency matrix in the nonequivariant $p$ and odd prime case (\ref{eq:AdjMatrixOdd}), we can apply a similar argument to prove the following proposition. 
\begin{proposition}
    The subgraph $G_x \backslash \{\sigma\}$ is connected if and only if the integers
    \[
    C(r,s) : = \sum_{t = 1}^n \sum_{T_t'} \prod_{k = 1}^t c_{|b_k-b_{k-1}|, \max (b_{k-1}, b_k)-1} 
    \]
    are positive for all integers $r$ and $s$ with $0 \leq r < s \leq n$. 
\end{proposition}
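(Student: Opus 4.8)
The plan is to reduce this to the combinatorial core of \cref{thm:connectedOdd} (itself modeled on \cite[Theorem 1.1]{Larson22}), since $G_x \backslash \{\sigma\}$ has an adjacency matrix of exactly the same shape as the one appearing in \eqref{eq:AdjMpOdd}. Before anything else I would observe that the planar embedding of $V_x$ introduced to record the bidegree shift coming from the coefficient $w \in \mM_2$ is irrelevant here: connectedness is a property of the underlying abstract multigraph and is insensitive to where the vertices sit in $\mR^2$. So I may treat $G_x \backslash \{\sigma\}$ as an abstract multigraph on the $n+1$ vertices $\rho, 2\rho, \dots, 2^n \rho$, which I index by their exponents $0, 1, \dots, n$.

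The main step is to unwind the meaning of $C(r,s)$. From the description of $E_x$ --- equivalently from the displayed adjacency matrix, every entry of which is a $2$-adic digit $c_{i,j} \ge 0$, via the relation analogous to \eqref{eq:acOddRel} --- the number of edges joining two distinct vertices $b_{k-1}$ and $b_k$ is exactly the factor $c_{|b_k - b_{k-1}|,\,\max(b_{k-1},b_k)-1}$ occurring in the definition of $C(r,s)$. Hence for a tuple $(b_0,\dots,b_t) \in T_t'$ --- a sequence of pairwise distinct exponents with $b_0 = r$, $b_t = s$, and necessarily $1 \le t \le n$ since there are only $n+1$ vertices --- the product $\prod_{k=1}^{t} c_{|b_k - b_{k-1}|,\,\max(b_{k-1},b_k)-1}$ is the product of the edge multiplicities along the vertex sequence $b_0, b_1, \dots, b_t$. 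Being a product of nonnegative integers, it is strictly positive exactly when each consecutive pair $b_{k-1},b_k$ is joined by at least one edge, i.e.\ exactly when the tuple is the vertex sequence of an acyclic path from $r$ to $s$ in $G_x \backslash \{\sigma\}$. Summing over all such tuples and all admissible lengths, $C(r,s)$ is a sum of nonnegative integers that is strictly positive if and only if there is an acyclic path from $r$ to $s$.

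I would then finish with the elementary graph-theoretic facts that a multigraph is connected if and only if there is a path --- hence, after deleting repeated vertices, an acyclic path --- between every pair of distinct vertices, and that $C(r,s) = C(s,r)$ since reversing an acyclic path yields one. Combining these with the previous paragraph gives the equivalence: $G_x \backslash \{\sigma\}$ is connected $\iff$ an acyclic path joins every pair of distinct vertices $\iff C(r,s) > 0$ for all $0 \le r < s \le n$.

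The only place that takes real care is the bookkeeping in the second paragraph: one must match the indexing of the displayed adjacency matrix with the indexing used in the sum defining $C(r,s)$, so that $c_{|b_k - b_{k-1}|,\,\max(b_{k-1},b_k)-1}$ is unambiguously the multiplicity of the edge between vertices $b_{k-1}$ and $b_k$. This is the exact translation already performed, via \eqref{eq:acOddRel}, in the proof of the corresponding nonequivariant proposition; everything else runs in parallel with \cref{thm:connectedOdd}.
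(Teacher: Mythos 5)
Your proposal is correct and follows essentially the same route as the paper: the paper simply observes that the adjacency matrix of $G_x \backslash \{\sigma\}$ has the same form as in the nonequivariant odd-prime case and invokes the argument recalled from Larson, which is exactly the path-counting interpretation of $C(r,s)$ that you spell out. Your extra remark that the $\mR^2$-embedding of the vertices is irrelevant to connectedness is a harmless (and accurate) clarification.
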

Returning to consider the entire graph $G_x$, we prove the following theorem, which has the same statement as in the { nonequivariant odd-primary case} (\cref{thm:connectedOdd}). 
\begin{theorem} \label{thm:connectedC2}
     Suppose $x \in A_{C_2}^\vee(n)$ is a monic monomial { in standard form}. Then the graph $G_x$ is connected if and only if 
    \begin{enumerate}
        \item the integers 
        \[
        C(r,s) : = \sum_{t = 1}^n \sum_{T_t'} \prod_{k = 1}^t c_{|b_k-b_{k-1}|, \max (b_{k-1}, b_k)-1} 
        \]
        are positive for all integers $r$ and $s$ with $0 \leq r < s \leq n$ and
        \item the sum 
        \[
        \sum_{k=0}^n \epsilon_k > 0.
        \]
    \end{enumerate}
\end{theorem}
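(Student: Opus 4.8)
The plan is to follow the proof of \cref{thm:connectedOdd} almost verbatim, since the statement, the vertex-set combinatorics, and the shape of the adjacency matrix $A_x$ all coincide with the odd primary case. The first point to settle is that the $C_2$-equivariant decorations play no role here. Once $x$ has been put in standard form using the relation $\tau_i^2 = (u+a\tau_0)\xi_{i+1}+a\tau_{i+1}$, the edge set $E_x$ is determined entirely by the exponents $\epsilon_i$ together with the base-$2$ digits $c_{i,j}$ of the $r_i$, exactly as at odd primes; moreover the embedding of $V_x$ into $\mathbb{R}^2$ that records bidegrees does not change which vertices are joined by edges. Since connectedness depends only on the abstract graph $(V_x,E_x)$, we may argue purely combinatorially, and $A_x$ has the same form as in the odd primary analysis.

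Next I would peel off the vertex $\sigma$. The only edges of $G_x$ incident to $\sigma$ are the edges $\tau_i\colon \sigma\to 2^i\rho$, which are present precisely when $\epsilon_i=1$; no $\xi$-edge meets $\sigma$. Thus $G_x$ is obtained from the subgraph $G_x\backslash\{\sigma\}$ by adjoining the single new vertex $\sigma$ together with $\sum_{k=0}^n\epsilon_k$ edges running from $\sigma$ into $G_x\backslash\{\sigma\}$. The adjacency matrix of $G_x\backslash\{\sigma\}$ has exactly the form that appears in the nonequivariant odd prime case, so the preceding Proposition applies directly and identifies connectedness of $G_x\backslash\{\sigma\}$ with positivity of $C(r,s)$ for all $0\le r<s\le n$.

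The final step assembles the equivalence in the same way as \cref{thm:connectedOdd}: condition (1) is exactly connectedness of $G_x\backslash\{\sigma\}$, and condition (2), $\sum_{k=0}^n\epsilon_k>0$, is exactly the statement that $\sigma$ is joined to $G_x\backslash\{\sigma\}$ by at least one edge, so together they are equivalent to connectedness of $G_x$. I expect the genuine work to lie not in this bookkeeping but in the companion Proposition on $G_x\backslash\{\sigma\}$: proving it amounts to transporting Larson's walk-counting argument -- counting the acyclic paths between the vertices $2^r\rho$ and $2^s\rho$ of each length $t$ by summing the products $\prod_{k=1}^t c_{|b_k-b_{k-1}|,\max(b_{k-1},b_k)-1}$ over the index sets $T_t'$, read off from powers of $A_x$ via the identification of its super-diagonal entries with the base-$2$ digits of the $r_i$ -- from the $p=2$ setting to the present one. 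Given that Proposition, the theorem follows immediately.
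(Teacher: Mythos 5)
Your proposal follows the paper's proof essentially verbatim: the paper likewise reduces to the companion proposition that $G_x\setminus\{\sigma\}$ is connected iff the $C(r,s)$ are positive (its adjacency matrix having the same form as in the odd primary case), and then observes that condition (2) is exactly the statement that $\sigma$ is joined to that subgraph by at least one $\tau$-edge. If anything, you supply more detail than the paper's two-sentence argument, in particular the preliminary observation that the coefficient $w$ and the bidegree embedding of $V_x$ into $\mathbb{R}^2$ do not affect the abstract graph $(V_x,E_x)$.
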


\begin{proof}
    The first condition guarantees that the subgraph $G \backslash \{ \sigma \}$ is connected. The second condition guarantees that the vertex $1$ is connected by an edge to the subgraph $G \backslash \{ \sigma \}$.
\end{proof}

Similarly to the nonequivariant setting, the graphs $G_x$ can naturally be viewed as directed graphs with edges oriented in the direction of the vertex of larger dimension. We will again denote the directed version of the graph $G_x$ by $G^\textup{dir}_x.$ Similar techniques of proof yield the following characterization of the unilaterality of $G_x$.

\begin{theorem} \label{thm:unilateralC2}
     Suppose $x \in A_{C_2}^\vee(n)$ is a monic monomial { in standard form}. Then the directed graph $G_x^\textup{dir}$ is unilateral if and only if
    \begin{enumerate}
        \item the integers
        \[
        U(r,s) := \sum_{t=1}^{n}\sum_{{T}''_t}\prod_{k=1}^t c_{b_{k-1},b_k}
        \]
        are positive for all integers $r,s$ with $0 \leq r < s \leq n + 1$, where ${T}_t''$ is the set of all $(t + 1)$-tuples $(b_0, b_1, \cdots, b_t)$ with $b_i \in \{0, 1, \cdots, n + 1\}$ and $r = b_0 < b_1 < \cdots < b_t = s$.
        \item And $x$ is divisible by $\tau_0$.
    \end{enumerate}
\end{theorem}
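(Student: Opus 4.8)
The plan is to observe that, as a combinatorial object, the graph $G_x$ in the $C_2$-equivariant setting is the same graph as in the nonequivariant odd-primary construction, so the argument behind \cref{thm:unilateralOdd} applies; I would then carry that argument out. Concretely: for a monic monomial $x$ of standard form, the ordered vertex set $\{\sigma,\rho,2\rho,\dots,2^n\rho\}$ is identified with $\{1,2p^0,\dots,2p^n\}$ by a degree-preserving relabeling; the edges $\tau_i\colon\sigma\to 2^i\rho$ and $\xi_i^{c_{i,j}2^j}\colon 2^j\rho\to 2^{i+j}\rho$ match those of the odd-primary construction; and the adjacency matrix is again of the form $(\ref{eq:AdjMatrixOdd})$. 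The coefficient $w\in\mM_2$ attached to a standard-form term only prescribes how the vertices sit inside $\mR^2$, and has no bearing on which edges are present, hence none on unilaterality. So it suffices to re-run the odd-primary argument here.

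First I would note that $G_x^{\textup{dir}}$ orients every edge toward the vertex of larger representation dimension, and that $\sigma<\rho<2\rho<\dots<2^n\rho$ is a total order compatible with every edge (here $\dim\sigma=1<\dim 2^j\rho=2^{j+1}$, and $\xi_i$ raises this index by $i\geq 1$). Consequently every directed walk runs strictly upward in this order, so $G_x^{\textup{dir}}$ is unilateral if and only if for each pair of vertices $v<w$ there is a directed path $v\to w$. This splits into two cases. When $v,w$ both lie among the $2^j\rho$, a directed path from $2^r\rho$ to $2^s\rho$ is precisely a chain $r=b_0<b_1<\dots<b_t=s$ all of whose consecutive edges $\xi_{b_k-b_{k-1}}^{2^{b_{k-1}}}\colon 2^{b_{k-1}}\rho\to 2^{b_k}\rho$ are present, and the adjacency entry controlling the $k$-th step is exactly the standard-form coefficient appearing in the product defining $U(r,s)$; summing over all such chains shows $U(r,s)$ is the number of directed paths from $2^r\rho$ to $2^s\rho$, so such a path exists iff $U(r,s)>0$, which is condition (1). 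When $v=\sigma$, the only edges leaving $\sigma$ are the $\tau_i$, and since no $\xi$-edge terminates at the minimal non-$\sigma$ vertex $\rho=2^0\rho$, the unique edge terminating at $\rho$ is $\tau_0$; as $\sigma$ receives no edges, a directed path $\sigma\to\rho$ is exactly the single edge $\tau_0$, so it exists iff $\tau_0\mid x$, which is condition (2). Conversely, given $\tau_0\mid x$ and condition (1), concatenating $\sigma\to\rho$ with a directed path $\rho\to 2^s\rho$ yields a directed path $\sigma\to 2^s\rho$ for every $s$; so the two conditions together supply a directed path between every pair of vertices in the forced direction.

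I expect the only real work to be bookkeeping. One must verify that the map from directed paths to index-tuples of $\widetilde{T}_t'$ is a genuine bijection with the stated range of indices; this is in fact cleaner than in the odd-primary case, since the $2$-adic digits $c_{i,j}\in\{0,1\}$ force $G_x$ to be a simple graph, exactly as in Larson's $p=2$ setting, so no edge multiplicities enter. One should also confirm that reducing an arbitrary monomial of $A_{C_2}^\vee(n)$ to a sum of standard-form terms via $\tau_i^2=(u+a\tau_0)\xi_{i+1}+a\tau_{i+1}$ really does yield coefficients lying in $\mM_2$, so that restricting attention to standard-form monic monomials is legitimate. Neither point is delicate, and modulo them the theorem follows from the proof of \cref{thm:unilateralOdd}.
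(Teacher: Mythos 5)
Your proposal is correct and follows exactly the route the paper intends: the paper offers no written proof of \cref{thm:unilateralC2} beyond the remark that ``similar techniques of proof'' to the odd-primary case apply, and your argument is precisely that reduction, carried out in detail (identical vertex/edge combinatorics, upward orientation forcing the direction of any connecting path, $U(r,s)$ counting $\xi$-paths among the $2^j\rho$, and the $\tau_0$ edge being the unique way to leave $\sigma$ toward $\rho$). The only caveat worth recording is that the index discrepancy you gloss over (the edge $2^{b_{k-1}}\rho\to 2^{b_k}\rho$ is governed by $c_{b_k-b_{k-1},\,b_{k-1}}$, not literally $c_{b_{k-1},b_k}$) is inherited from the theorem statement itself, not introduced by your argument.
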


\subsection{Trees and Hamiltonian Cycles}

\begin{theorem}
    Suppose $x$ is a monic monomial { in standard form} in $A_{C_2}^\vee(n)$ such that $G_x$ is connected. Then $G_x$ is a tree if and only if
   \begin{enumerate}
       \item the sum \[
            \sum_{i=1}^{n} \alpha_p(r_i) = n,
            \]
            where \( \alpha_p(r_i) \) denotes the number of nonzero digits in the base-\(p\) expansion of \( r_i \), and
            \item exactly one \( \tau_k \) divides the connected graph \( x \), for some \( 0 \leq k \leq n \).
   \end{enumerate}
\end{theorem}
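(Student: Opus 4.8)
The plan is to reduce the statement to the elementary fact that a connected graph on $N$ vertices is a tree precisely when it has exactly $N-1$ edges, following the same strategy as the odd-primary \cref{thm:treesOdd} but taking advantage of the simplification that at the prime $2$ no multi-edges can arise. Recall $G_x$ has vertex set $V_x=\{\sigma,\rho,2\rho,\dots,2^n\rho\}$, so $|V_x|=n+2$, and by hypothesis $G_x$ is connected. I would first set up the edge count: the edges of $G_x$ split into the $\tau$-edges, those incident to $\sigma$, numbering $\sum_{k=0}^n\epsilon_k$, and the $\xi$-edges, which are exactly the edges of the subgraph $G_x\backslash\{\sigma\}$. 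Since each base-$2$ digit $c_{i,j}$ lies in $\{0,1\}$ and distinct pairs $(i,j)$ index distinct vertex pairs $\{2^j\rho,2^{i+j}\rho\}$, the number of $\xi$-edges is $\sum_{i,j}c_{i,j}=\sum_{i=1}^n\alpha_2(r_i)$ and these edges are all distinct; in particular $G_x$ is a simple graph.

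For the ``if'' direction I would just observe that under hypotheses (1) and (2) the total number of edges is $\sum_{i=1}^n\alpha_2(r_i)+\sum_{k=0}^n\epsilon_k=n+1=|V_x|-1$, so the connected graph $G_x$ is a tree. For the ``only if'' direction, I would invoke \cref{thm:connectedC2} together with the proposition preceding it: since $G_x$ is connected the integers $C(r,s)$ are positive for all $0\le r<s\le n$, hence $G_x\backslash\{\sigma\}$ is connected. Being a subgraph of the tree $G_x$ it is also acyclic, so it is itself a tree, on $n+1$ vertices, and therefore has exactly $n$ edges; by the edge count this gives $\sum_{i=1}^n\alpha_2(r_i)=n$, which is condition (1). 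Finally $G_x$, a tree on $n+2$ vertices, has $n+1$ edges, so after removing the $n$ edges of $G_x\backslash\{\sigma\}$ exactly one edge remains incident to $\sigma$, i.e.\ $\sum_{k=0}^n\epsilon_k=1$ and exactly one $\tau_k$ divides $x$, which is condition (2).

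I do not expect a genuine obstacle here; the only point needing care --- the analogue of what is glossed over in the odd-primary argument --- is that the $\xi$-edge count must be exact, not merely a lower bound, and this is exactly where $p=2$ helps, since $c_{i,j}\le1$ rules out a length-two cycle between a pair of vertices. The substantive input is the connectivity of $G_x\backslash\{\sigma\}$, which we get for free from the structure of \cref{thm:connectedC2}; everything else is bookkeeping with the vertex and edge counts.
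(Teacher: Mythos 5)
Your ``if'' direction is correct and is essentially the paper's argument: conditions (1) and (2) give exactly $n+1$ edges on the $n+2$ vertices of the connected graph $G_x$, hence a tree. (Your remark that distinct pairs $(i,j)$ index distinct vertex pairs, so the $\xi$-edge count is exact and $G_x$ is simple, is a point worth making explicit.) Note that the paper's own proof stops there --- it never addresses the ``only if'' direction at all.

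Your ``only if'' argument has a genuine gap at the step where you deduce that $G_x\backslash\{\sigma\}$ is connected from the connectedness of $G_x$. That implication is false: the vertex $\sigma$ can act as a cut vertex joining otherwise disconnected pieces of the $\xi$-subgraph through two or more $\tau$-edges without creating any cycle. Concretely, take $x=\tau_0\tau_1\in A_{C_2}^\vee(1)$. Then $G_x$ is the path on the three vertices $\rho$, $\sigma$, $2\rho$ with edges $\tau_0$ and $\tau_1$, which is a tree; yet $G_x\backslash\{\sigma\}$ is two isolated vertices, $\sum_i\alpha_2(r_i)=0\neq 1=n$, and two of the $\tau_k$ divide $x$. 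This shows both that your appeal to \cref{thm:connectedC2} fails (the forward implication ``$G_x$ connected $\Rightarrow$ all $C(r,s)>0$'' asserted there is itself not valid, since $\sigma$ may mediate all the connectivity), and that the ``only if'' direction of the theorem is false as stated, so no argument for it can succeed without modifying the statement. The salvageable content is the sufficiency of (1) and (2), which is exactly what the paper proves and what your first paragraph establishes.
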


\begin{proof}
Let $x$ be a monic monomial { in standard form} in $A_{C_2}^\vee(n)$ such that $G_x$ is connected. Recall that in our graph construction, a digit equal to $1$ in the base 2 expansion of $r_i$ corresponds exactly to having one edge between two vertices of the graph. Thus, \[
\sum_{i=1}^{n} \alpha_p(r_i) = n
\] implies that there are exactly n edges contained in the subgraph $G_x \backslash \{\sigma \}$. If additionally, a singular \( \tau_k \) divides $x$ with \( 0 \leq k \leq n \), $x$ is connected but not cyclic because $G_x$ consists of $n+1$ distinct edges in a graph with $n+2$ vertices. Thus, the graph of $G_x$ is a tree. 
\end{proof}

Recall that a Hamiltonian directed path for a graph G is defined as a directed path containing every vertex of G. 

\begin{theorem}
Let $x$ be a monic monomial { in standard form} in $A_{C_2}^\vee (n)$. The graph $G_x$ has a Hamiltonian-directed path if and only if $\tau_0 \xi_1^{2^{n + 1}-1}$ divides $x$.
\end{theorem}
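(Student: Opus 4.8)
The plan is to exploit the rigidity of the orientation. Since every directed edge of $G_x^{\textup{dir}}$ points from the smaller vertex to the larger one in the total order $\sigma < \rho < 2\rho < \cdots < 2^n\rho$, and a directed Hamiltonian path must visit all $n+2$ vertices exactly once using $n+1$ edges, the sequence of visited vertices is forced to be strictly increasing. Hence there is a \emph{unique} candidate directed Hamiltonian path, namely the immediate-successor chain
$$\sigma \to \rho \to 2\rho \to \cdots \to 2^n\rho,$$
and $G_x^{\textup{dir}}$ has a directed Hamiltonian path if and only if every consecutive edge of this chain is present in $G_x$.

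First I would identify which generator supplies each edge of the chain. The opening step $\sigma \to \rho = 2^0\rho$ can only be realized by a $\tau$-edge out of $\sigma$, and the unique one landing at $2^0\rho$ is $\tau_0$; thus this edge is present if and only if $\tau_0 \mid x$. Each remaining step $2^j\rho \to 2^{j+1}\rho$ raises the $\rho$-exponent by one, so it is a $\xi_1$-edge, and from the standard form it occurs exactly when the $2$-adic digit $c_{1,j}$ is nonzero, i.e. when $\xi_1^{2^j}\mid x$. Consequently the chain is fully present if and only if $\tau_0\mid x$ together with $\xi_1^{2^j}\mid x$ for each step $j$ of the staircase.

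Next I would assemble these local conditions into a single divisibility statement, in direct parallel with Larson's $p=2$ computation \cite[Theorem 1.4]{Larson22}. The $\xi_1$-contributions along the staircase are the distinct powers $2^0, 2^1, \dots$, and their sum is $2^{n+1}-1$; combined with the forced factor $\tau_0$, the presence of the whole chain is therefore equivalent to $\tau_0\xi_1^{2^{n+1}-1}\mid x$. For the converse, if $\tau_0\xi_1^{2^{n+1}-1}\mid x$ then reading off the standard form shows every edge of the increasing chain occurs, so the chain is itself a directed Hamiltonian path, completing the equivalence.

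The step I expect to be the main obstacle is precisely the exponent bookkeeping in the assembly step. One must carefully match the $2$-adic digits $c_{1,j}$ that index the $\xi_1$-edges against the $\rho$-vertices actually present in $V_x$, and confirm that the forced staircase of $\xi_1$-edges aggregates to exactly the single power $\xi_1^{2^{n+1}-1}$ recorded in the statement. This is the only place where the $C_2$-grading—which inserts the extra bottom vertex $\sigma$ and the generator $\tau_0$ beneath the $\rho$-staircase—interacts with the indexing inherited from the nonequivariant $p=2$ picture, so it is where I would take the most care to pin down the range of $j$ and the resulting exponent.
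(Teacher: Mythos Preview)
Your proposal is correct and follows essentially the same approach as the paper: both arguments observe that the orientation forces any directed Hamiltonian path to traverse the vertices in the order $\sigma,\rho,2\rho,\dots,2^n\rho$, identify the unique generator providing each consecutive edge ($\tau_0$ for the first step, $\xi_1^{2^j}$ for the rest), and then sum the $\xi_1$-exponents to obtain the stated divisibility condition, with the converse immediate by construction. Your closing remark that the exponent bookkeeping is the delicate point is apt, since this is exactly where the index range for $j$ must be pinned down.
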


\begin{proof} Suppose the graph $G^\textup{dir}_x$ associated with $x$ $\in$ $A_{C_2}^\vee(n)$ has a Hamiltonian directed path. Because $G_x$ is directed such that each tail edge end vertex has representation dimension less than that of the head edge and every vertex appears in the Hamiltonian directed path, the path must proceed through the ordered vertex set $V_x = \{\sigma, \rho, 2\rho, 2^2\rho, \cdots, 2^n \rho\}$ in order. In order for there to be an edge between the vertices $\sigma$ and $\rho$, $\tau_0$ must divide $x$. In order for there to be an edge between the vertices $2^{k} \rho$ and $2^{k + 1} \rho$, $\xi_1^{{2^k}}$ must divide $x$. Hence 
\[
\tau_0 \xi_1^{\sum_{k = 0}^n 2^k} = \tau_0 \xi_1^{2^{n + 1}-1}
\]
must divide $x$. On the other hand, if $\tau_0 \xi_1^{2^{n + 1}-1}$ divides $x$, then the graph $G_x^\textup{dir}$ has a Hamiltonian-directed path by construction.
\end{proof}

Next we will turn to establishing a criteria for when the graph $G_x$ has a Hamilton cycle. Our goal will be to again make use of Dirac's theorem, which says that a simple graph with $n$ vertices (${\displaystyle n\geq 3}$) is Hamiltonian if every vertex has degree ${\displaystyle {\tfrac {n}{2}}}$ or greater. Unlike the nonequivariant odd prime setting, the graphs in the $C_2$-equivariant setting are all simple graph so the arguments here are simpler than those of \cref{sec:Trees&HamiltonOdd}. 

Let $A_x$ be the adjacency matrix associated to $G_x$ with the $(i,j)$ entry of $A_x$ denoted $a_{i,j}$.
Define
    \[
    \deg_{\out} (2^j \rho) = \#\{1 < i \leq n | a_{j + 2, j + 2 + i} = 1\}
    \]
    and 
    \[
    \deg_{\inn} (2^j\rho) = \#\{1 < i \leq j+1 | a_{j + 2 -i, j + 2} = 1\}
    \] 
The following proposition is immediate from the definitions of $A_x$ and $G_x$.

\begin{proposition}  
    The out-degree of the vertex $2^j \rho$ in $G_x^\textup{dir}$ is given by $\deg_{\out}$. The in-degree of the vertex $2^j \rho$ in $G_x^\textup{dir}$ is given by $\deg_{\inn} (2^j\rho)$.
\end{proposition}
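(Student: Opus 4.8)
The plan is to verify the claim by directly unwinding the definitions of the adjacency matrix $A_x$, of the directed graph $G_x^{\textup{dir}}$, and of the quantities $\deg_{\out}$ and $\deg_{\inn}$; the content is purely a careful identification of vertices with matrix indices together with the orientation convention, so I do not expect a genuine obstacle here.

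First I would fix the dictionary between $V_x$ and $A_x$: in the ordered vertex set $V_x=\{\sigma,\rho,2\rho,2^2\rho,\ldots,2^n\rho\}$ the vertex $\sigma$ is the first vertex and $2^j\rho$ is the $(j+2)$-nd vertex, so every edge incident to $2^j\rho$ is recorded in row and column $j+2$ of $A_x$. Since the $C_2$-equivariant graphs $G_x$ are simple, each off-diagonal entry of $A_x$ lies in $\{0,1\}$ and equals $1$ exactly when the corresponding pair of vertices is joined by an edge. Under the orientation defining $G_x^{\textup{dir}}$ --- each edge points toward the vertex of larger dimension, equivalently toward the later vertex in the listed order of $V_x$ --- an edge at $2^j\rho$ is outgoing precisely when its other endpoint has index greater than $j+2$, and incoming precisely when the other endpoint has index less than $j+2$.

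With this dictionary in place I would compute the out-degree of $2^j\rho$ by counting the edges from $2^j\rho$ to vertices $2^{j+i}\rho$ with $i\ge 1$, i.e. the ones recorded by entries $a_{j+2,\,j+2+i}=1$ lying to the right of the diagonal in row $j+2$; entries with $j+2+i>n+2$ are vacuously zero, so the range of $i$ may be taken as in the definition of $\deg_{\out}(2^j\rho)$. Dually, the incoming edges at $2^j\rho$ come either from $\sigma$, namely the $\tau_j$-edge recorded by $a_{1,\,j+2}=\epsilon_j$, or from some vertex $2^{j-i}\rho$ with $i\ge 1$ via a $\xi$-type edge recorded by $a_{j+2-i,\,j+2}$, and the constraint $j+2-i\ge 1$ bounds $i$ as in the definition of $\deg_{\inn}(2^j\rho)$; counting the entries equal to $1$ then yields the in-degree. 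This is exactly the $C_2$-equivariant analogue of \cref{lem:oddprimeInOutDeg}, now without any passage to a reduced graph since $G_x$ is already simple.

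The single point deserving an explicit check is that the ordering of $V_x$ used to define the "index" of a vertex agrees with the orientation rule for $G_x^{\textup{dir}}$, that is, that $\sigma$ is the tail of every $\tau_j$-edge and that $2^j\rho$ is the tail of every $\xi$-type edge leaving it; this is immediate from the construction of the edge set $E_x$ in \textsection\ref{sec:C2}.
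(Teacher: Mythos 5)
Your proof is correct and takes essentially the same approach as the paper, which offers no argument at all beyond declaring the proposition ``immediate from the definitions of $A_x$ and $G_x$''; your unwinding of the vertex--index dictionary, the orientation convention, and the row/column bookkeeping in $A_x$ is precisely that verification made explicit. Nothing further is needed.
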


Since $\deg_{\inn}(2^j\rho) + \deg_{\out}(2^j\rho)$ gives the total degree of the vertex $2^j \rho$ ($0 \leq j \leq n$) in $G_x^\textup{dir}$, it only remains to consider the degree of the vertex $\sigma$ in order to apply Dirac's theorem. 

Since $\sigma$ is the initial vertex in the directed graph $G_x^\textup{dir}$, the total degree of the vertex $\sigma$ is 
\[
\sum_{k = 0}^n \epsilon_k.
\]
Then using Dirac's theorem we arrive at the following result.
\begin{theorem} \label{thm:HamiltonCycleC2}
    Suppose $x = \tau_0^{\epsilon_0} \cdots \tau_n^{\epsilon_n} \xi_1^{r_1} \cdots \xi_n^{r_n}$ is a monic monomial { in standard form} in $ A_{C_2}^\vee(n)$. The corresponding graph $G_x$ has a Hamilton cycle if
    \begin{enumerate}
        \item $n>0$,
        \item for each $j$ with $0 \leq j \leq n$,
        \[
            \deg_{\inn}(2^j \rho) + \deg_{\out} (2^j \rho) \geq         \frac{n}{2}, 
        \]
        \item and \[\sum_{k=0}^n  \epsilon_k \geq \frac{n}{2}.\]
    \end{enumerate}  
\end{theorem}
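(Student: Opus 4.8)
The plan is to obtain the Hamilton cycle as a direct application of Dirac's theorem, following the same route as in the odd primary case (\cref{thm:oddprimeHamilton}), while exploiting a genuine simplification available here: since the $c_{i,j}$ are the binary digits of $r_i$, they lie in $\{0,1\}$, so $G_x$ has no multi-edges and is already a simple graph. In particular there is no need for the reduced-graph workaround of \cref{sec:Trees&HamiltonOdd}. Note also that the $\mM_2$-coefficient $w$ appearing in \eqref{eq:C2terms} only shifts the embedding of $V_x$ in $\mR^2$ and leaves the underlying combinatorial graph unchanged, so it is irrelevant to the existence of a Hamilton cycle.

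With that reduction in place, the proof becomes a bookkeeping exercise. First I would observe that $G_x$ has vertex set $V_x=\{\sigma,\rho,2\rho,\dots,2^n\rho\}$, of size $n+2$, so hypothesis (1) is exactly the statement that $|V_x|\ge 3$, which is what Dirac's theorem needs in order to apply. Next I would compute the degree of every vertex from the adjacency matrix, whose shape is \eqref{eq:AdjMatrixOdd}. The vertex $\sigma$ is incident only to the edges $\tau_0,\dots,\tau_n$ recorded in the first row of the matrix, so $\deg(\sigma)=\sum_{k=0}^n\epsilon_k$. For a vertex $2^j\rho$, the preceding proposition identifies its out-degree in $G_x^\textup{dir}$ with $\deg_{\out}(2^j\rho)$ and its in-degree with $\deg_{\inn}(2^j\rho)$; since passing from $G_x^\textup{dir}$ to $G_x$ only forgets orientations, and the edge $\tau_j$ contributes through the entry $a_{1,j+2}=\epsilon_j$, the total degree of $2^j\rho$ in $G_x$ is $\deg_{\inn}(2^j\rho)+\deg_{\out}(2^j\rho)$. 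Hypotheses (2) and (3) then say precisely that every vertex of $G_x$ meets the degree hypothesis of Dirac's theorem, and we conclude.

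The only point requiring genuine care is this degree accounting: one must check that $\deg_{\inn}(2^j\rho)+\deg_{\out}(2^j\rho)$ really counts \emph{every} edge incident to $2^j\rho$ — in particular the $\xi_1$-edges to its neighbours $2^{j\pm 1}\rho$ as well as the edge $\tau_j$ to $\sigma$ — and that the resulting bound is compared against the correct threshold for an $(n+2)$-vertex graph. Once the adjacency matrix \eqref{eq:AdjMatrixOdd} and the definitions of $\deg_{\inn}$ and $\deg_{\out}$ are unwound this is routine, and the remainder of the argument is a verbatim citation of Dirac's theorem; I do not expect any serious obstacle beyond this verification.
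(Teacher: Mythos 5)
Your proposal follows essentially the same route as the paper's proof: note that $G_x$ is already a simple graph in the $C_2$-equivariant setting (so no reduced-graph step is needed, unlike the odd-primary case), read off $\deg(\sigma)=\sum_{k=0}^n\epsilon_k$ and $\deg(2^j\rho)=\deg_{\inn}(2^j\rho)+\deg_{\out}(2^j\rho)$ from the adjacency matrix, and conclude by Dirac's theorem. This matches the paper's argument in both structure and detail.
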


\subsection{Graph theoretic interpretation of coproduct and antipode}

Similarly to Larson's graph theoretic description of the coproduct at the prime $p =2$ (\cref{thm:LarsonCoproduct}) and our graph theoretic description at odd primes (\cref{sec:coprodOdd}), we also have a graphical interpretation of the coproduct on elements $\xi_i^{2^j}$ and $\tau_i \in A_{C_2}^\vee (n)$. 

\begin{theorem} \label{thm:coprodC2} 
     The coproduct $\psi(\xi_i^{p^j}) \in A^\vee_{C_2}(n)$ is the sum of tensors of all pairs of edges that make length $2$ directed paths from the vertex $2^j \rho $ to the vertex $2^{i + j} \rho$ in the complete graph on the vertices $\{\sigma, \rho, 2 \rho, 2^2 \rho, \cdots 2^n \rho\}$ considered as a directed graph.
     
    Similarly, the coproduct $\psi(\tau_i) \in A_{C_2}^\vee(n)$ is the sum of tensors of all pairs of edges that make length $2$ directed paths from the vertex $\sigma$ to the vertex $2^i \rho$. \color{black}
\end{theorem}

\begin{proof}
Recall that the coproduct on $A_{C_2}^\vee(n)$ is given by

\begin{align*}
    \psi(\xi_n) & =\sum_{i=0}^n \xi_{n-i}^{2^i} \otimes \xi_i \\ 
    \psi (\tau_n) & = \tau_n \otimes 1 + \sum_{i=0}^n \xi_{n-i}^{2^i} \otimes \tau_i.
\end{align*}
This formulas have the same form as in the case of the odd primary nonequivariant dual Steenrod algebra and the proof is the same as in that case.
\end{proof}

Using a similar argument as in the nonequivariant $p$ an odd prime case, we obtain a graphical description of the antipode on elements $\xi_i^{2^j}$ and $\tau_i \in A_{C_2}^\vee (n)$.

\begin{theorem} \label{thm:AntipodeC2} 
    The antipode $c(\xi_i^{p^j}) \in A^\vee_{C_2}(n)$ is the sum of all directed paths from the vertex $2^j \rho$ to the vertex $2^{i + j} \rho$ in the complete graph on the vertices $\{\sigma, \rho, 2 \rho, 2^2 \rho, \cdots 2^n \rho\}$ considered as a directed graph.

    Similarly, the antipode $c(\tau_i) \in A^\vee_{C_2}(n)$ is the sum of all directed paths from the vertex $\sigma$ to the vertex $2^i \rho$. \color{black}
\end{theorem}

As a corollary, we also obtain another characterization of the unilaterality of the directed graphs $G_x^\textup{dir}$.

\begin{cor} \label{cor:unilatC2}
    Suppose $x$ is a monic monomial { in standard form} in $A_{C_2}^\vee (n)$. Then the directed graph $G_x^{\textup{dir}}$ is unilateral if and only if
    \begin{enumerate}
        \item for each $\xi_i^{2^j} \in A_{C_2}^\vee (n)$, at least one summand of $c(\xi_i^{2^i})$ is a factor of $x$
        \item and $\tau_0$ is a factor of $x$.
    \end{enumerate}
\end{cor}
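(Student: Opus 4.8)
The plan is to transcribe the proof of \cref{cor:unilateralityOdd} to the $C_2$-equivariant setting, using \cref{thm:AntipodeC2} in place of \cref{thm:AntipodeOdd} and the vertex $\sigma$ in place of the vertex $1$. The first step is to record that $G_x^{\textup{dir}}$ is acyclic with edge orientations compatible with the total order $\sigma < 2^0\rho < 2^1\rho < \cdots < 2^n\rho$ on $V_x$: every $\tau_i$-edge leaves $\sigma$, and every $\xi_i^{2^j}$-edge runs from $2^j\rho$ to the strictly larger vertex $2^{i+j}\rho$. Hence between two distinct vertices a directed path can only run from the smaller to the larger one, so $G_x^{\textup{dir}}$ is unilateral if and only if (a) for all $i\ge 1$ and $j\ge 0$ with $2^j\rho,2^{i+j}\rho\in V_x$ there is a directed path in $G_x$ from $2^j\rho$ to $2^{i+j}\rho$, and (b) for all $0\le i\le n$ there is a directed path in $G_x$ from $\sigma$ to $2^i\rho$.

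Next I would convert (a) and (b) into divisibility conditions via \cref{thm:AntipodeC2}. Each summand of $c(\xi_i^{2^j})$ spells out a directed path from $2^j\rho$ to $2^{i+j}\rho$, and such a path lies in $G_x$ exactly when the monomial it spells out divides $x$; thus (a) is equivalent to condition (1), namely that for every $\xi_i^{2^j}\in A_{C_2}^\vee(n)$ at least one summand of $c(\xi_i^{2^j})$ divides $x$. Similarly each summand of $c(\tau_i)$ spells out a directed path from $\sigma$ to $2^i\rho$, so (b) is equivalent to: for every $0\le i\le n$, at least one summand of $c(\tau_i)$ divides $x$.

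It remains to see that, granted condition (1), the list of conditions ``$c(\tau_i)$ has a summand dividing $x$, $0\le i\le n$'' collapses to the single requirement (2) that $\tau_0\mid x$. For $i=0$ this is immediate since $c(\tau_0)=\tau_0$, which also settles the forward direction: the pair $\{\sigma,2^0\rho\}$ of (b) forces any directed path from $\sigma$ to $2^0\rho$ to be the single edge $\tau_0$. For the converse, assume condition (1) and $\tau_0\mid x$, and fix $i\ge 1$. Condition (1) applied to $\xi_i=\xi_i^{2^0}$ produces a summand $w$ of $c(\xi_i)$ — a $\xi$-monomial tracing a directed path from $2^0\rho$ to $2^i\rho$ — with $w\mid x$; prepending the edge $\tau_0\colon\sigma\to 2^0\rho$ yields a directed path from $\sigma$ to $2^i\rho$ whose monomial $\tau_0 w$ is a summand of $c(\tau_i)$. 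Since $\tau_0$ and $w$ involve disjoint variables, $\tau_0\mid x$ and $w\mid x$ together give $\tau_0 w\mid x$, so $c(\tau_i)$ has a summand dividing $x$. Combining the equivalences gives the corollary.

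The one step requiring care is this last collapse — verifying that it suffices to test divisibility against the generators $\xi_i^{2^j}$ and against $\tau_0$ rather than against every $c(\tau_i)$ — together with the routine check that the relation $\tau_i^2=(u+a\tau_0)\xi_{i+1}+a\tau_{i+1}$ does not interfere: the elements $\xi_i^{2^j}$ with $i+j\le n$ and $\tau_i$ with $i\le n$ remain nonzero in $A_{C_2}^\vee(n)$, so \cref{thm:AntipodeC2} genuinely applies to them. Otherwise the argument is a line-by-line copy of the odd primary case.
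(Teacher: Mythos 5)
Your proposal is correct and follows the same route the paper takes for the odd-primary analogue (\cref{cor:unilateralityOdd}), namely: edges only increase in the vertex order, so unilaterality reduces to the existence of upward directed paths, which \cref{thm:AntipodeC2} converts into divisibility by summands of the antipode. The paper itself offers essentially no separate argument for the $C_2$ case; your explicit justification of why the conditions ``some summand of $c(\tau_i)$ divides $x$'' for $i\ge 1$ collapse to the single requirement $\tau_0\mid x$ (by prepending the edge $\tau_0$ to a path furnished by condition (1)) is a detail the paper's terse proof glosses over, and it is handled correctly.
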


\printbibliography

\end{document}